\documentclass{amsart}

\usepackage{amsmath,amssymb, stmaryrd}
\usepackage{ascmac}
\usepackage{amsthm}
\usepackage{color} 
\usepackage{MnSymbol}

\theoremstyle{plain}
\newtheorem{thm}{Theorem}[section]

\newtheorem{prop}[thm]{Proposition}
\newtheorem{lmm}[thm]{Lemma}

\theoremstyle{definition}

\DeclareMathOperator{\an}{an}
\DeclareMathOperator{\hyb}{hyb}

\DeclareMathOperator{\NA}{NA}
\DeclareMathOperator{\MA}{MA}

\DeclareMathOperator{\Spec}{Spec}

\DeclareMathOperator{\Supp}{Supp}
\DeclareMathOperator{\Proj}{Proj}
\DeclareMathOperator{\Max}{Max}
\DeclareMathOperator{\ord}{ord}

\DeclareMathOperator{\NS}{NS}

\DeclareMathOperator{\dd^c}{dd^c}

\newcommand{\bA}{\mathbb{A}}
\newcommand{\bC}{\mathbb{C}}
\newcommand{\bD}{\mathbb{D}}

\newcommand{\bN}{\mathbb{N}}
\newcommand{\bP}{\mathbb{P}}
\newcommand{\bQ}{\mathbb{Q}}
\newcommand{\bR}{\mathbb{R}}
\newcommand{\bZ}{\mathbb{Z}}

\newcommand{\cC}{\mathcal{C}}
\newcommand{\cD}{\mathcal{D}}

\newcommand{\cF}{\mathcal{F}}
\newcommand{\cL}{\mathcal{L}}

\newcommand{\cO}{\mathcal{O}}

\newcommand{\cX}{\mathcal{X}}

\newcommand{\fm}{\mathfrak{m}}
\newcommand{\fa}{\mathfrak{a}}

\newcommand{\fA}{\mathfrak{A}}

\usepackage[truedimen,top=25truemm,bottom=25truemm,left=25truemm,right=25truemm]{geometry}

\title{Non-archimedean and hybrid dynamics of regular polynomial automorphisms}
\author{Irokawa Reimi}
\address{NTT Institute for Fundamental Mathematics, NTT Communication Science Laboratories, NTT Inc., 3-9-11 Midori-cho, Musashino-shi, Tokyo 180-8585, Japan}
\email{reimi.irokawa@ntt.com}

\subjclass[2020]{Primary: 37F10, Secondary: 14G22, 37P30, 32A08}
\keywords{complex dynamics; non-archimedean dynamics; complex geometry}
\date{2026/9/30}
\begin{document}

\begin{abstract}
    In this paper, we study the weak limit of the invariant measures attached to an analytic family $\{f_t\}_{t\in\bD^*}$ of regular polynomial automorphisms over $\bC^N$ parametrized by the unit punctured disk which may degenerate at the origin, by using the techniques called hybrid spaces developed by Boucksom--Favre--Jonsson. For each $t$, the invariant measure $\mu_t$ is constructed by Sibony, and we show that the weak limit of $\{\mu_t\}_t$ over the hybrid space is given by the invariant measure of the regular polynomial automorphism $f_{\bC((t))}^{\an}$ over $\bC((t))$ induced from the original family. This is a generalization of the author's previous result for degenerating families of dynamics of H\'enon mappings.
\end{abstract}
\maketitle

\section{Introduction}

In this paper, we study the degeneration of a meromorphic family of complex dynamics of regular polynomial automorphisms over $\bC^N$. 

We consider an analytic family $\{f_t:\bC^N\to\bC^N\}_{t\in\bD^*}$ of regular polynomial automorphisms of degree $d_+$ parametrized over the unit punctured disk. Here, we say a polynomial automorphism is regular if the indeterminacy loci of its extension and the extension of its inverse have empty intersection. We note that regular polynomial automorphisms are higher-dimensional generalizations of H\'enon maps in dimension 2, and
the dynamics of such maps are studied mainly by Sibony, especially their pluri-potential theoretic properties in several papers such as \cite{Sibony99}. Especially, he constructed invariant currents $T_f^{\pm}$ over $\bC^N$; these currents are closed and of bidegree $(1,1)$, with the equality
\begin{align*}
    (f^{\pm1})^*T_f^{\pm}=\frac{1}{d_{\pm}}T_f^{\pm}.
\end{align*}
Using these currents we may define the invariant measure $\mu_f$. In dimension 2, i.e.\ the dynamics of H\'enon maps, such a measure is already constructed by Berford and Smillie \cite{BedfordSmillie1991}.

In this paper, we are interested in the case where $\{f_t\}_{t\in\bD^*}$ meromorphically degenerates at the origin, that is, the extension $f_t:\bC^N\times \bD\to\bC^N\times \bD$ is a meromorphic map with indeterminacy only on the fiber of the origin. 

The theory of hybrid spaces is a good tool to study degenerating families of dynamical systems. The theory of hybrid spaces is developed/formulated in a series of papers by Boucksom, Favre and Jonsson \cite{Sebastien:2015uz} \cite{Sebastien:2017le}, \cite{Sebastien:2017lq} originally to study the degeneration of complex manifolds, and later applied to the study of dynamics by Favre \cite{Favre:2020rz}. We briefly recall the notion of hybrid spaces.

In our situation, each coefficient of $f_t$ is written as a meromorphic function defined on $\bD$ with poles only at the origin, which naturally induces a regular polynomial automorphism $f_{\bC((t))}:\bA^N_{\bC((t))}\to \bA^N_{\bC((t))}$ by taking Laurent expansion of the coefficients and regarding them as elements of $\bC((t))$. Introducing a $t$-adic norm on $\bC((t))$ and taking the Berkovich analytification, we may regard $f_{\bC((t))}^{\an}$, which we will denote by $f_{\NS}$ from now on, as a dynamical system on the non-archimedean variety. We note that we also study the non-archimedean dynamics of regular polynomial automorphisms in this paper; for details, see Section \ref{sec: regularpolyautom - Invariant measure of the non-archimedean dynamics}. Here we show results similar to Sibony \cite{Sibony99}, which will be used in the proof of our main result. Especially we construct the invariant measure $\mu_{\NA}$ attached to $f_{\NA}$.

The hybrid space gives a relation between the original family of complex dynamics $\{f_t\}$ and the induced non-archimedean dynamics $f_{\bC((t))}^{\an}$. We fix $r<1$ sufficiently small. The hybrid affine $N$-space $\bA^{N,\hyb}$ has a natural surjection $\pi^{\hyb}:\bA^{N,\hyb}\to\cC_{\hyb}(r)$ with a homeomorphism $\tau:\bar{\bD}_r\to \cC_{\hyb}(r)$ such that
\begin{align*}
    &\psi_t: \bC^N\to(\pi^{\hyb})^{-1}(\tau(t)) \text{ for }t\neq 0; \\
    &\psi_0: \bA^{N,\an}_{\bC((t))}\to (\pi^{\hyb})^{-1}(\tau(0))
\end{align*}
are both homeomorphisms. The map $f_t$ also induces a continuous map $f^{\hyb}$ such that, via the above homeomorphism $\psi_t$,
\begin{align*}
    &f^{\hyb}|_{t}=f_t \text{ for }t\neq 0; \\
    &f^{\hyb}|_0=f_{\bC((t))}^{\an}.
\end{align*}

Under this setting, our main result is as follows:

\begin{thm}
    Let $\{f_t:\bC^N\to\bC^N\}_{t\in\bD^*}$ be an analytic family of regular polynomial automorphisms. We set $\mu_t$ to be the measure of maximal entropy associated to $f_t$, $f_{\NA}:\bA^{N,\an}_{\bC((t))}\to\bA^{N,\an}_{\bC((t))}$ be the analytification of the automorphism naturally induced from the family $\{f_t\}$, and $\mu_{\NA}$ be the canonical measure attached to $f_{\NA}$. Then, over the associated hybrid space $\pi_{\hyb}:\bA^{N,\hyb}\to \cC_{\hyb}(r)$ and canonical homeomorphisms $\psi_t:\bC^N\to(\pi_{\hyb})^{-1}(\tau(t))$ for $0<|t|<r$ and $\psi_0:\bA^{N,\an}_{\bC((t))}\to(\pi_{\hyb})^{-1}(\tau(0))$, we have the following weak convergence of measures:
    \begin{align*}
        \lim_{t\to0}(\psi_t)_*\mu_t=(\psi_0)_*\mu_{\NA}.
    \end{align*}
\end{thm}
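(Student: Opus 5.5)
We follow the strategy of the author's earlier argument for Hénon maps and of Favre's hybrid approach: express both $\mu_t$ and $\mu_{\NA}$ as Monge--Ampère measures of Green functions, prove that the Green functions converge uniformly on compacts in the hybrid space after normalizing by $\log|t|^{-1}$, and then invoke continuity of the Monge--Ampère operator along hybrid families.

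Concretely, $\mu_t = (dd^c G_t^+)^{\ell}\wedge(dd^c G_t^-)^{N-\ell}$, where $\ell$ is the dimension of the indeterminacy locus of $f_t^{-1}$ plus one. Here $G_t^\pm=\lim d_\pm^{-n}\log^+\|f_t^{\pm n}\|$ are Sibony's Green functions. On the non-archimedean side, the results of Section~\ref{sec: regularpolyautom - Invariant measure of the non-archimedean dynamics} give $\mu_{\NA}=(dd^c G^+_{\NA})^{\ell}\wedge(dd^c G^-_{\NA})^{N-\ell}$ in the sense of Chambert-Loir--Ducros, with $G^\pm_{\NA}$ continuous. We define the hybrid functions $G^{\pm}_{\hyb}$ on $\bA^{N,\hyb}$ by $G^\pm_t/\log|t|^{-1}$ on the fiber over $\tau(t)$ and by $G^\pm_{\NA}$ on the central fiber.

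\textbf{Step 1: continuity of $G^\pm_{\hyb}$.} The function $\log^+\|z\|/\log|t|^{-1}$ extends continuously to $\bA^{N,\hyb}$, and so does each $d_\pm^{-n}\log^+\|f^{\pm n}\|/\log|t|^{-1}$, since $f^{\hyb}$ is continuous. We need uniform convergence in $n$. Regularity gives, on each fiber, the estimate
\begin{align*}
\bigl|\log^+\|f_t(z)\| - d_+\log^+\|z\|\bigr|\le C_t
\end{align*}
outside a neighborhood of the attracting part, and a comparable filtration estimate elsewhere. The key point is to show $C_t\le C\log|t|^{-1}+C'$ uniformly for small $t$. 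This follows by writing the regularity condition as a Nullstellensatz identity among the homogeneous top-degree parts of $f_t$ and $f_t^{-1}$ with meromorphic coefficients; the coefficients have at most polynomial growth in $|t|^{-1}$. Telescoping then yields uniform convergence of the normalized approximants, hence $G^\pm_{\hyb}$ is continuous.

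\textbf{Step 2: weak convergence.} We would apply the hybrid continuity of mixed Monge--Ampère measures due to Favre and Boucksom--Favre--Jonsson. If continuous psh-type functions on $\bA^{N,\hyb}$ are uniform limits of "model" functions $\log\max|P_i|/\log|t|^{-1}$ with $P_i$ polynomial in $z$ and meromorphic in $t$, then the fiberwise Monge--Ampère measures converge weakly to the non-archimedean one. Our approximants $d_\pm^{-n}\log\max(\|f^{\pm n}\|,1)$ are exactly of this form. The normalization by $\log|t|^{-1}$ cancels, since $(dd^c G/\log|t|^{-1})$ has total mass scaling by $(\log|t|^{-1})^{-N}$. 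Here it is cleaner to use the probability normalization, noting that $\mu_t$ and $\mu_{\NA}$ both have mass $1$. Since the approximants converge uniformly, and the Monge--Ampère operator is continuous under uniform convergence on each fiber with constants independent of $t$, a $3\varepsilon$ argument gives the theorem.

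\textbf{Main obstacle.} Step 2 requires compactness: the measures live on noncompact fibers. We must show that the supports of $\mu_t$, after $\psi_t$, stay in a fixed compact subset of $\bA^{N,\hyb}$. Equivalently, we need $\log\|z\|\le C\log|t|^{-1}$ on $\Supp\mu_t$, which again comes from the uniform filtration estimate of Step 1: the support lies in $\{G^+_t=G^-_t=0\}$ intersected with a polydisc of radius $|t|^{-C}$. Establishing these $t$-uniform Sibony-type estimates is where most of the work lies.
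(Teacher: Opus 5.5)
\textbf{The gap is in Step 2.} The hybrid continuity result you can actually quote is Favre's Theorem 4.2, recalled here as Proposition \ref{prop:themainfact}. It concerns a single continuous metric on $\cO(1)$ over the \emph{proper} family $\bP^N(\bC)\times\bD^*$. That metric must be approximated on the whole fiber $\bP^N(\bC)$, with error $\epsilon_n\log|t|^{-1}$, by model functions of regular admissible data, and these data must generate \emph{vertical} ideals.

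Your individual Green functions $G^\pm_t$ do not satisfy this, for three reasons:
\begin{itemize}
\item $K^+_{f_t}=\{G^+_t=0\}$ is unbounded, so $G^+_t-\log\|(z,1)\|$ is unbounded below. Hence $G^+_t$ is not a continuous metric on $\cO(1)$ over $\bP^N(\bC)$.
\item The sections $F^n_{1,t},\ldots,F^n_{N,t},S^{d_+^n}$ have common zeros $I(F^n)\times\bD^*$. The ideal they generate is therefore not vertical, and your approximants $d_+^{-n}\log\max(\|f^n\|,1)$ are not model functions in the required sense.
\item Even for fixed $t$, the convergence $d_+^{-n}\log^+\|f_t^n\|\to G^+_t$ is not uniform on $\bC^N$. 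Indeed $f_t^n$ maps the unbounded set $K^+_{f_t}$ onto itself, so the $n$-th approximant is unbounded on $\{G^+_t=0\}$ for every $n$.
\end{itemize}
Nor can you fall back on a local version on the non-compact space $\bA^{N,\hyb}$. The input of your $3\varepsilon$ argument is convergence of the fiberwise Monge--Amp\`ere measures of the approximants themselves. That is established for model metrics on a proper model, and you give no argument for it in this affine, non-vertical situation. The compactness of supports, which you single out as the main obstacle, is correct and matches Lemma \ref{lmm: regularpolyautom - hybridconv - extn to proj}. But it is only the easy final reduction and does not repair the above.

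\textbf{The missing idea} is to replace the mixed product by the Monge--Amp\`ere measure of the single function $G=\max\{G^+,G^-\}$. Set $d:=d_+^{l_-}=d_-^{l_+}$ and use the common normalization $d^{-n}\log\max\{\|F_t^{nl_-}\|_*,\|F_t^{-nl_+}\|_*\}$.
\begin{itemize}
\item Regularity, $I(F)\cap I(F^{-1})=\emptyset$, makes $F^{nl_-}_{i,t},F^{-nl_+}_{i,t},S^{d^n}$ generate a vertical ideal.
\item A resultant identity on $\{S=0\}$ with $t$-controlled coefficients gives the error bound $Cd^{-n}\log|t|^{-1}$ on all of $\bP^N(\bC)$ (Proposition \ref{prop: regularpolyautom - uniformity main}).
\item Favre's theorem then applies to $\MA_t(g)$ and $\MA_{\NA}(g)$.
\end{itemize}
To return to the measures in the statement you need $\mu_t=(dd^c G_t)^N$ (Guedj--Sibony). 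Above all you need its non-archimedean counterpart $\mu_{\NA}=(dd^c G_{\NA})^N$, the second assertion of Theorem \ref{thm: regularpolyauto - nonarchdynamics - invariancemain}. Proving it requires the vanishing $(dd^c G^\pm_{\NA})^{l_\mp}=0$ off $K^\pm_f$ in Chambert-Loir--Ducros theory. Your proposal uses only the definition $\mu_{\NA}=T_{+,\NA}^{l_-}\wedge T_{-,\NA}^{l_+}$ and never establishes this identity. That identity is precisely what makes the hybrid machinery applicable.
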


In the later sections, we will explain the definition of everything appearing in this theorem.

As written above, the theory of hybrid spaces was first applied to the study of degenerating complex dynamics by Favre \cite{Favre:2020rz}. He proved a similar weak convergence result for canonical measures for a degenerating family of holomorphic endomorphisms of projective spaces $\bP^n(\bC)$. The author \cite{irokawa2023hybrid} proved a similar result for degenerating families of complex H{\'e}non mappings, and our main result here is a generalization of this. As an application of Favre's result, Bianchi and Okuyama \cite{Bianchi2018DegenerationOQ} studied the degeneration of quadratic polynomial endomorphisms of $\bP^2(\bC)$ when these maps degenerate to an H{\'e}non map. We also note that, before Favre's work, DeMarco and Faber \cite{DE-MARCO:2014tn} studied the degeneration of rational functions on $\bP^1(\bC)$ using a similar technique: they used a Berkovich projective line over the completion of the Puiseux series field $\bC\{\{t\}\}$, and Okuyama \cite{okuyama2024degenerating} corrected their proof.

Here we clarify the difference between our result and the author's previous result in \cite{irokawa2023hybrid}, the similar weak convergence of the invariant measures for dimension $2$. The proof of the weak convergence, which is displayed in Section \ref{sec: regularpolyautom - Hybrid convergence of the invariant measures}, is a higher-dimensional analogue of that in \cite{irokawa2023hybrid}, though the invariance of the limit measure was not known at that time; it will be proved in Section \ref{sec: regularpolyautom - Invariant measure of the non-archimedean dynamics}. That is, the claim that the measure $\mu_{\NA}$ is invariant under the action of $f$, is new even in the dimension $2$ case.

\subsection*{Acknowledgement}
I would like to thank Professor Charles Favre for suggesting this problem as a direction for future work upon receiving \cite{irokawa2023hybrid}, and Professor Gabriel Vigny for providing me with a proof of Theorem \ref{thm: regularpolyauto - nonarchdynamics - invariancemain} in the complex case. This research is partially supported by JSPS KAKENHI 26K16974 (Grant-in-Aid for Early-Career Scientists).

\subsection*{Use of AI}
The results are given by the author themselves. AI tools were used for checking the consistency of notation and for grammatical corrections.

\section{The Berkovich spaces} \label{sec: intro of berkoich spaces}
In this section, we recall basic results regarding the Berkovich geometry. 

Throughout this section, let $K$ be a field with complete, non-trivial, and non-archimedean valuation $|\cdot|$.

\subsection{Affine and Projective spaces} \label{subseq: intro of berkovich spaces - non-arch. space}

We construct the Berkovich affine and projective spaces $\bA^{N,\an}_K$ and $\bP^{N,\an}_K$, which are the so-called Berkovich analytification of the affine and projective spaces $\bA^N_K$ and $\bP^N_K$. 

For an affine space $\bA^N_K=\Spec K[x_1,\ldots,x_N]$, its Berkovich analytification is defined as follows: the underlying set consists of all the multiplicative seminorms over $K[x_1,\ldots,x_N]$ such that its restriction to the base field $K$ is the given valuation. The topology is the weakest one such that for any polynomial $f\in K[x_1,\ldots,x_N]$, the evaluation map $x\mapsto |f(x)|$ is continuous, where we denote the evaluation of $f$ at $x$ by $|f(x)|$. 

We note that, every maximal ideal $\fm$ of $K[x_1,\ldots,x_N]$ defines a point $x_{\fm}$ in $\bA^{N,\an}_K$ by
\begin{align*}
    |f(x_{\fm})|:=|f\ \text{mod}\ \fm|,
\end{align*}
where $f$ mod $\fm$ is an element of the finite field extension $L=K[x_1,\ldots,x_N]/\fm$ of $K$, to which the valuation $|\cdot|$on $K$ extends uniquely. Especially, each point $(z_1,\ldots,z_N)\in K^N$ defines a maximal ideal $(x_1-z_1,\ldots,x_N-z_N)$ in $K[x_1,\ldots,x_N]$, which coincides with the evaluation map at $(z_1,\ldots,z_N)$ as a seminorm. In this way, we obtain an inclusion $\Max(K[x_1,\ldots,x_N])\to \bA^{N,\an}_K$, which is known to be continuous and have dense image.

We note that there is a point so-called the Gauss point of $\bA^{N,\an}_K$, which is important in the later discussion. For any 
\begin{align*}
    f=\sum_{I}c_Ix^I,
\end{align*}
where $I=(i_1,\ldots,i_N)$ is a multi-index, we define
\begin{align*}
    |f(\zeta_G)|:=\max_{I}|c_I|.
\end{align*} 
By a simple calculation, this map defines a multiplicative norm on $K[x_1,\ldots,x_N]$. It is also known that if $K$ is algebraically closed,
\begin{align*}
    |f(\zeta_G)|=\max_{\|x\|\leq 1}|f(x)|,
\end{align*}
where $\|x\|=\max(|x_1|,\ldots,|x_N|)$ for any $x=(x_1,\ldots,x_N)\in K^N$.

The analytification of projective space is obtained by patching an affine cover together, but here, we present Baker-Rumely's "Proj" construction presented in \cite{BakerRumely2010}. In \cite{BakerRumely2010}, it is only defined for the projective line, but it is possible to extend the construction to higher dimensional projective spaces.

Take a homogeneous coordinate to write $\bP^{N}_K=\Proj K[X_0,\ldots, X_N]$. Let $S$ be the set of multiplicative seminorms on $K[X_0,\ldots,X_N]$ which is not identically zero on the maximal ideal $(X_0,\ldots,X_N)$. We introduce an equivalence relation $\sim$ as follows: Two seminorms $x$ and $y$ are equivalent if and only if there exists $C>0$ such that for any homogeneous polynomial $F\in K[X_0,\ldots,X_N]$ of degree $d$ such that $|F(x)|=C^d\cdot |F(y)|$. For instance, two points $x=(x_0\ldots,x_N)$ and $y=(y_0,\ldots,y_N)\in K^{N+1}$ defining the same point in $\bP^N(K)$, the multiplicative seminorms $x:F\mapsto |F(x_0,\ldots,x_N)|$ and $y:F\mapsto |F(y_0,\ldots,y_N)|$ are equivalent; indeed, we can take $C$ in the definition by $|s|$ where $s\in K$ is defined by $x_i=s\cdot y_i$ for any $i$.

For a multiplicative seminorm $x$ on $K[X_0,\ldots,X_N]$, we write the evaluation map at its equivalent class by $\llbracket \cdot(x) \rrbracket$. It is well-defined up to scalar multiplication if we restrict them to the set of homogeneous polynomials. We define the normalization of these classes. For the above $x$, let $i\in\{0,\ldots,N\}$ be an index such that $|X_i|$ attains $\max(|X_0|,\ldots,|X_N|)$. Since at least one of $|X_j|$ is nonzero, $|X_i|>0$, which enables us to extend $x$ to the seminorm on $K[X_0,\ldots,X_N,1/X_i]$ multiplicatively. Define
\begin{align*}
    \llbracket F(x) \rrbracket^*:= \llbracket f(x)\rrbracket,
\end{align*}
where $f=F(X_0/X_i,\ldots,X_{i-1}/X_i,1, X_{i+1}/X_i,\ldots, X_N/X_i)$. We note that this normalization defines a natural representative of the equivalence class of $S$; for any $x\sim y$, we have $\llbracket F(x)\rrbracket^*=\llbracket F(y)\rrbracket^*$ for any homogeneous $F$. Also, for any seminorm $x$ and any homogeneous polynomial $F$ of degree $d$, we have the following relation:
\begin{align*}
    \llbracket F(x)\rrbracket^*=\frac{\llbracket F(x)\rrbracket}{\max\left(\llbracket X_0(x)\rrbracket,\ldots, \llbracket X_N(x)\rrbracket\right)^d}.
\end{align*}
We define the analytic projective space $\bP^{N,\an}_K$ is defined by
\begin{align*}
    \bP^{N,\an}_K:=S/\sim.
\end{align*}
The topology is defined to be the weakest one such that $x\mapsto \llbracket F(x)\rrbracket^*$ is continuous for any homogeneous $F$.

Here we only describe the analytification of affine and projective spaces as topological spaces, though it actually has a structure of the sheaf of analytic functions. We omit details of them though we note that we can consider the notion of analytic functions, and also for an algebraic line bundle $L$, we can consider its analytification $L^{\an}$.

\subsection{Hybrid spaces} \label{subseq: intro of berkovich space - hybrid space}
We recall the definition of hybrid spaces, which is developed in a series of papers by Boucksom, Favre and Jonsson \cite{Sebastien:2015uz}, \cite{Sebastien:2017le} and imported to the study of complex dynamics by Favre \cite{Favre:2020rz}. It is a topological space whose shape looks like "a family of complex varieties degenerating to a non-archimedean variety." Here we present the definition when the family is the trivial one of the projective space $\pi:\bP^N(\bC)\times\bD^* \to \bD^*, (x,t)\mapsto t$ parametrized by the unit punctured disk $\bD^*$, or that of the affine spaces $\pi:\bC^N\times\bD^*\to\bD^*$, which are the only necessary cases in this paper.

Fix a real number $0<r<1$. The hybrid norm $|\cdot|_{\hyb}$ on the complex number field $\bC$ is defined by
\begin{align*}
    |x|_{\hyb}=
    \begin{cases}
    \max\{|x|,1\} &\text{ if }x\neq0, \\
    0 &\text{ if }x=0
    \end{cases}
\end{align*}
for any $x\in\bC$, where $|x|$ is the Euclidean norm of $x$. Given the hybrid norm on $\bC$, define the following ring $A_r$:
\begin{align*}
    A_r:=\left\{f(t)=\sum_{i=-\infty}^{\infty}a_it^i\ \middle|\ a_i\in\bC,\ \sum_{i=-\infty}^{\infty}|a_i|_{\hyb}r^i<\infty\right\}.
\end{align*}
We can naturally introduce a metric structure on $A_r$ as follows:
\begin{align*}
    \left|\sum_{i=-\infty}^{\infty}a_it^i\right|_{\hyb}:=\sum_{i=-\infty}^{\infty}|a_i|_{\hyb}r^i<\infty.
\end{align*}
We regard $A_r$ as a completely normed $\bC$-algebra with it, where the norm on $\bC$ is the hybrid norm. Then, by taking the Berkovich spectrum of $A_r$, we have an analytic space $\cC_{\hyb}(r)$. 

By definition, the points of the Berkovich spectrum of $A_r$ are bounded multiplicative seminorms on $A_r$. Hence, define the map $\tau:\bar{\mathbb{D}}_r\to \cC_{\hyb}(r)$ as follows:
\begin{align*}
    \tau(t)(f):=
    \begin{cases}
    |f(t)|^{\frac{\log r}{\log|t|}} &\text{ if }0<|t|\leq r, \\
    |f|_r=r^{\ord_0(f)} &\text{ if }t=0,
    \end{cases}
\end{align*}
for $f\in A_r$ and $t\in\bar{\mathbb{D}}_r$. It is known that $\tau$ is actually a homeomorphism.

The Berkovich analytification of the scheme $\Proj A_r[X_0,\ldots,X_N]$ (resp. $\Spec A_r[x_1,\ldots,x_N]$) is called the hybrid projective (resp. affine) $N$-space and is denoted by $\bP^{N,\hyb}$ (resp. $\bA^{N,\hyb}$).

It is defined similarly to $\bP^{N,\an}_{\bC((t))}$ though this time $A_r$ is not a non-archimedean field, which requires us to modify the definition slightly. As in $\bP^{N,\an}_{\bC((t))}$, we consider standard affine coverings and patch them together to define $\bP^{N,\hyb}$, so we only see the definition of the hybrid analytification of $\Spec A_r[x_1,\ldots,x_N]$, denoted by $\bA^{N,\hyb}$. The hybrid affine plane $\bA^{N,\hyb}$ is a locally compact Hausdorff space whose underlying set is 
\begin{align*}
    \bA^{N,\hyb}=\left\{x:A_r[x_1,\ldots,x_N]\to \bR_{\geq0}\quad\middle|\quad \begin{aligned}
   &x:\text{a multiplicative seminorm}, \\
   &x|_{A_r}\text{ is bounded by }|\cdot|_{\hyb}\end{aligned}\right\},
\end{align*}
with the topology of pointwise convergence. From \cite{Favre:2020rz}, we have the following properties:
\begin{itemize}
    \item The natural map $\pi_{\hyb}:\bP^{N,\hyb}\to\cC_{\hyb}(r)$ is continuous and proper.
    \item For any $t\neq0$, we have a canonical homeomorphism
    \begin{align*}
        \psi_t:\bP^N(\bC)\to\pi_{\hyb}^{-1}\{\tau(t)\}.
    \end{align*}
    \item For $t=0$, there is a canonical homeomorphism
    \begin{align*}
        \psi_0:\bP^{N,\an}_{\bC((t))}\to\pi_{\hyb}^{-1}\{\tau(0)\}.
    \end{align*}
\end{itemize}

Here, the map $\pi_{\hyb}$ is just a restriction map $x\to x|_{A_r}$. Any seminorm on $\cC_{\hyb}(r)$ induces a norm on $\bC$ by restriction, where by definition either the trivial norm or $|\cdot|^{\varepsilon}$ for $0<\varepsilon\leq 1$. We set $|\cdot|^0$ to be the trivial norm, which makes sense since $\{|\cdot|^{\varepsilon}\}_{\varepsilon>0}$ converges to the trivial norm as $\varepsilon\to0$. Thus, any $x\in\bP^{N,\hyb}$ is attached to a real number $\varepsilon\in[0,1]$ such that $x|_{\bC}=|\cdot|^{\varepsilon}$. We denote it by $\eta(x)$. The function $\eta:\bP^{N,\hyb}\to[0,1]$ is known to be continuous and proper as discussed in \cite{Favre:2020rz}.

\section{Monge-Amp{\`e}re measures on complex, non-archimedean, and hybrid spaces}\label{sec: regularpolyautom - Monge-Ampere}

\subsection{Pluripotential theory over Berkovich spaces}

We briefly recall the parts of non-Archimedean pluripotential theory developed by Chambert-Loir and Ducros \cite{ChambertLoirDucros2025} that will be used below. Let \(K\) be a complete non-Archimedean field, and let \(X\) be a good, paracompact, smooth \(K\)-analytic space without boundary. We write
\[
dd^c := d'd'',
\]
where $d'$ and $d''$ are the differential operators of forms/currents that appear in \cite{ChambertLoirDucros2025}, so that we can use the same notation in both the complex and non-archimedean settings.

The only plurisubharmonic functions that we shall use are continuous psh functions which are locally uniform limits of smooth psh functions. For such functions, Chambert-Loir and Ducros developed a Bedford--Taylor theory: if \(u_1,\dots,u_q\) are psh and \(T\) is a positive closed current, then the current
\[
dd^c u_1 \wedge \cdots \wedge dd^c u_q \wedge T
\]
is well-defined, symmetric in the functions \(u_i\), and continuous under local uniform convergence of the potentials. We shall also use the Poincar\'e--Lelong formula
\[
dd^c[\log|h|] = [\operatorname{div}(h)]
\]
for a regular meromorphic function \(h\). In particular, if \(h\) is invertible on an open subset, then \(\log|h|\) is pluriharmonic there.

The following special case is the only vanishing result that we shall need later.

\begin{prop} \label{prop: pluripotential theory - lth wedge of ddclogmax(1,|g1|,...,|gl-1|) is 0}
    Let $U$ be a good, paracompact and smooth $K$-analytic space without boundary, and $g_1,\ldots,g_{l-1}\in\Gamma(U,\cO_U)$ be a set of analytic functions. Then, we have
    \begin{align*}
        \left(\dd^c \log\max\left\{1,|g_1|,\ldots,|g_{l-1}|\right\}\right)^l=0
    \end{align*}
    on $U$.
\end{prop}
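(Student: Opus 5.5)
The plan is to reduce the vanishing to a local statement. Near any point we write the function $u=\log\max\{1,|g_1|,\ldots,|g_{l-1}|\}$ as a maximum of $l$ pluriharmonic functions, and then show that the $l$-fold Monge--Amp\`ere of such a maximum vanishes, using only the Bedford--Taylor continuity and the Poincar\'e--Lelong formula recalled above.

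\textbf{Localization.} Set $g_0:=1$, so that $u=\max_{0\le i\le l-1}\log|g_i|$. Vanishing of a current is local, so it suffices to work on a small neighbourhood $V$ of an arbitrary point $x$. Let $I_x$ be the set of indices attaining the maximum at $x$; note that $0\in I_x$ or $|g_i(x)|\ge 1$ for every $i\in I_x$. In particular each $g_i$ with $i\in I_x$ is invertible near $x$. By continuity, on a small enough $V$ only the indices in $I_x$ matter, so
\[
u=\max_{i\in I_x}\log|g_i| \quad\text{on } V,
\]
and by Poincar\'e--Lelong each $\log|g_i|$ appearing here is pluriharmonic on $V$.

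\textbf{Reduction to $\log\max$ of invertible ratios.} Fix $i_0\in I_x$ and write
\[
u=\log|g_{i_0}|+\log\max_{i\in I_x}|g_i/g_{i_0}|,
\]
where the ratios are invertible analytic functions. Since $dd^c\log|g_{i_0}|=0$ on $V$, we get $dd^c u=dd^c v$ with $v=\log\max\{1,|h_1|,\ldots,|h_m|\}$ and $m=|I_x|-1\le l-1$. So it suffices to prove $(dd^c v)^{l}=0$ whenever $v$ is built from at most $l-1$ functions.

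\textbf{Induction on $m$.} We prove $(dd^c v)^{m+1}=0$ by induction on $m$; this implies the claim because $m+1\le l$. For $m=0$, $v=0$ and there is nothing to prove. In general, regularize $v$ as a decreasing limit of
\[
v_\epsilon=\max\nolimits_\epsilon\{0,\log|h_1|,\ldots,\log|h_m|\},
\]
where $\max_\epsilon$ is a smooth regularized maximum. By the chain rule,
\[
dd^c v_\epsilon=\sum_{j}\partial_j\max\nolimits_\epsilon\; dd^c\phi_j+\sum_{j,k}\partial_{jk}\max\nolimits_\epsilon\; d'\phi_j\wedge d''\phi_k,
\]
with $\phi_j=\log|h_j|$ and $\phi_0=0$. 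The first sum vanishes because each $\phi_j$ is pluriharmonic. The second is a combination of the $m$ independent $(1,1)$-forms $d'\phi_j\wedge d''\phi_k$ built from the $m$ one-forms $d'\phi_1,\ldots,d'\phi_m$ and their $d''$-counterparts. Any wedge of $m+1$ such terms contains $m+1$ factors of type $d'\phi$ drawn from a span of rank at most $m$, so it vanishes, since odd-type forms anticommute. Hence $(dd^c v_\epsilon)^{m+1}=0$ identically, and Bedford--Taylor continuity under local uniform convergence gives $(dd^c v)^{m+1}=0$. With this argument the induction is not strictly needed; it only serves as a fallback if the rank argument needs case-splitting along the locus where the maximum is attained.

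\textbf{Main obstacle.} The hard step is justifying this rank argument in the Chambert-Loir--Ducros formalism. There the forms are superforms on tropical charts, and $\phi_j$ is locally the pullback of a coordinate function under a tropicalization map, namely the valuation of the invertible $h_j$. I would first try to pick a single moment map $V\to\mathbb{G}_m^{m,\an}$, $y\mapsto(h_1(y),\ldots,h_m(y))$. Then $v$ is the pullback of the convex piecewise linear function $\max\{0,y_1,\ldots,y_m\}$ on $\bR^m$, and $(dd^c v)^{m+1}$ is the pullback of an $(m+1,m+1)$-superform on $\bR^m$, which is zero for degree reasons. This tropical version may well replace the regularization entirely, and it is the version I would write down.
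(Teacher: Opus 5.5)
Your argument is correct, but it is organized quite differently from the paper's.

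\textbf{The paper's route.} The paper inducts on $l$ and adds one function at a time. It covers $U$ by two open sets. On $\Omega_1=\{\log|g_{l-1}|<-1/4\}$ the function $g_{l-1}$ is inactive, and the induction hypothesis gives $(dd^cA)^l=dd^c\bigl(A(dd^cA)^{l-1}\bigr)=0$. On $\Omega_2=\{\log|g_{l-1}|>-3/4\}$ the function $g_{l-1}$ is invertible. After subtracting the pluriharmonic $\log|g_{l-1}|$, the paper must show $(dd^cB')^l=0$ for $B'=\max\{A',0\}$ with $(dd^cA')^{l-1}=0$. It does this by integrating by parts, $\int\varphi\wedge(dd^cB')^l=\int B'(dd^cB')^{l-1}\wedge dd^c\varphi$, and using $B'=A'$ on $\{B'>0\}$ and $0\le B'\le\varepsilon$ on the rest of $\operatorname{Supp}\varphi$.

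\textbf{Your route.} You localize at a point to the set $I_x$ of active indices and make all of them invertible at once. Dividing by one of them turns the claim into a degree count: near $x$, $u$ is, up to a pluriharmonic term, pulled back from a convex function on $\bR^m$ with $m+1=|I_x|\le l$.

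\textbf{What each buys.} The paper's route uses only Bedford--Taylor calculus (locality and integration by parts). Its key step works for any continuous psh $A'$ with $(dd^cA')^{l-1}=0$, not just log-max functions, and it needs neither regularization nor tropical charts. Your route is shorter and more transparent, and it gives the sharper local statement $(dd^cu)^{|I_x|}=0$ near $x$. The price is reliance on the Chambert-Loir--Ducros description of smooth forms as pullbacks along tropical charts, plus continuity of the Monge--Amp\`ere operator under uniform limits (which the paper also assumes).

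\textbf{One correction to your final paragraph: keep the regularization.} The Monge--Amp\`ere of $\max\{0,y_1,\ldots,y_m\}$ on $\bR^m$ is a supercurrent, not a superform. Supercurrents do not pull back along tropicalization maps in general, so $(dd^cv)^{m+1}$ is not literally the pullback of an $(m+1,m+1)$-superform. The rigorous version merges your two paragraphs:
\begin{itemize}
\item Choose smooth convex $M_\epsilon$ converging uniformly to $\max\{0,y_1,\ldots,y_m\}$ on $\bR^m$.
\item Set $v_\epsilon:=M_\epsilon(\log|h_1|,\ldots,\log|h_m|)$. This is smooth, because the $h_j$ are invertible on $V$ so this is a tropical chart. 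It is psh, because $M_\epsilon$ is convex.
\item Then $(d'd''v_\epsilon)^{m+1}$ is the pullback of an $(m+1,m+1)$-superform on $\bR^m$, hence zero.
\item Bedford--Taylor continuity then gives $(dd^cv)^{m+1}=0$.
\end{itemize}
In this form, the ``main obstacle'' you flag is exactly this degree count and needs no further justification.
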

\begin{proof}
If $l>\dim(U)$, then the claim is trivial. Thus we may assume $l\leq \dim(U)$.
We argue by induction on $l$.

For $l=1$, we have
\[
\log \max\{1\} = 0,
\]
which is pluriharmonic, so the claim is clear.

Assume now that the statement holds for $l-1$. Set
\[
A := \log \max\{1, |g_1|, \ldots, |g_{l-2}|\},
\qquad
B := \max\{A,\log |g_{l-1}|\}.
\]
Then
\[
B=\log \max\{1, |g_1|, \ldots, |g_{l-1}|\}.
\]
We define two open subsets of $U$ by
\[
\Omega_1 := \{z\in U \mid \log |g_{l-1}(z)| < -1/4\},
\qquad
\Omega_2 := \{z\in U \mid \log |g_{l-1}(z)| > -3/4\}.
\]
Clearly, $\Omega_1\cup \Omega_2 = U$.

On $\Omega_1$, since $A\geq 0$ and $\log |g_{l-1}|<-1/4$, we have $B=A$. By the induction hypothesis,
\[
(\dd^c A)^{l-1}=0.
\]
Hence, by the Bedford--Taylor definition,
\[
(\dd^c B)^l = (\dd^c A)^l = \dd^c\bigl(A(\dd^c A)^{l-1}\bigr)=0
\]
on $\Omega_1$.

Next we work on $\Omega_2$. Since $g_{l-1}$ is invertible on $\Omega_2$, the function
$\log |g_{l-1}|$ is pluriharmonic there. Define
\[
A' := A-\log |g_{l-1}|,
\qquad
B' := B-\log |g_{l-1}| = \max\{A',0\}.
\]
Since $A'-A$ is pluriharmonic, we have
\[
(\dd^c A')^{l-1}=(\dd^c A)^{l-1}=0.
\]
Also, since $B-B'=\log |g_{l-1}|$ is pluriharmonic on $\Omega_2$, it is enough to show
\[
(\dd^c B')^l=0
\]
on $\Omega_2$.

Let $n:=\dim(U)$, and let $\varphi$ be any non-negative test form of type
$(n-l,n-l)$ with compact support contained in $\Omega_2$. For $\varepsilon>0$, set
\[
E_{\varepsilon}:=\{z\in \operatorname{Supp}(\varphi)\mid B'(z)\leq \varepsilon\}.
\]
Since $B'$ is plurisubharmonic, we have
\[
\int_{\Omega_2}\varphi\wedge (\dd^c B')^l \geq 0.
\]
By the integration by parts formula,
\[
\int_{\Omega_2}\varphi\wedge (\dd^c B')^l
=
\int_{\Omega_2} B'(\dd^c B')^{l-1}\wedge \dd^c\varphi.
\]
We split the right-hand side into
\[
\int_{\Omega_2} B'(\dd^c B')^{l-1}\wedge \dd^c\varphi
=
\int_{E_{\varepsilon}} B'(\dd^c B')^{l-1}\wedge \dd^c\varphi
+
\int_{\Omega_2\setminus E_{\varepsilon}} B'(\dd^c B')^{l-1}\wedge \dd^c\varphi.
\]

Now, on the open set $\{B'>0\}$ we have $B'=A'$. Since
$\operatorname{Supp}(\varphi)\setminus E_{\varepsilon}\subset \{B'>0\}$, by continuity there exists
an open neighborhood of $\operatorname{Supp}(\varphi)\setminus E_{\varepsilon}$ on which $B'=A'$.
Therefore, by locality of Bedford--Taylor products,
\[
B'(\dd^c B')^{l-1}=A'(\dd^c A')^{l-1}=0
\]
near $\operatorname{Supp}(\varphi)\setminus E_{\varepsilon}$, and hence
\[
\int_{\Omega_2\setminus E_{\varepsilon}} B'(\dd^c B')^{l-1}\wedge \dd^c\varphi = 0.
\]
Thus,
\[
\int_{\Omega_2}\varphi\wedge (\dd^c B')^l
=
\int_{E_{\varepsilon}} B'(\dd^c B')^{l-1}\wedge \dd^c\varphi.
\]

Since $0\leq B'\leq \varepsilon$ on $E_{\varepsilon}$, we obtain
\[
\left|
\int_{E_{\varepsilon}} B'(\dd^c B')^{l-1}\wedge \dd^c\varphi
\right|
\leq
\varepsilon
\int_{\operatorname{Supp}(\varphi)}
\left|(\dd^c B')^{l-1}\wedge \dd^c\varphi\right|.
\]
The right-hand side is independent of $\varepsilon$, so letting $\varepsilon\to 0$ gives
\[
\int_{\Omega_2}\varphi\wedge (\dd^c B')^l = 0.
\]
Since $\varphi$ was arbitrary, we conclude that
\[
(\dd^c B')^l=0
\]
on $\Omega_2$. Hence $(\dd^c B)^l=0$ on $\Omega_2$ as well.

Since $\Omega_1\cup\Omega_2=U$, we obtain
\[
(\dd^c B)^l=0
\]
on $U$. This proves the proposition.
\end{proof}

\subsection{Model functions} \label{subsection : backgrounds - model functions}
Here we briefly explain the notion of model functions, which is essential in the proof of our main theorem. Throughout this subsection, we consider a trivial family $X:=\bP^N(\bC)\times\bD^*$ and its natural model $\cX:=\bP^N(\bC)\times\bD$. We denote $L:=\cO(1)\times\bD^*$ and $\cL:=\cO(1)\times \bD$.

We set $\pi:X\to\bD$ and $\pi_{\cX}:\cX\to\bD$ to be the second projections, and 
\begin{align*}
    \bar{X}_r:=\pi^{-1}(\bar{\bD}_r^{*})=\bP^N(\bC)\times\bar{\bD}_r^*.
\end{align*}

A {\it regular admissible datum} $\cF$ consists of
\begin{itemize}
    \item a positive integer $d\in\bN$ called the {\it degree} of $\cF$;
    \item a vertical fractional ideal sheaf $\fA$ of $\cX$ such that $t^N \fA \subset \cO_{\cX}$ for large $N$;
    \item a finite set of meromorphic sections $\tau_0,\ldots\tau_l$ of $\cL^d$ defined in a neighborhood of $\bar{\cX}_r$ generating $\fA$;
    \item a log resolution $p:\cX'\to\cX$ of $\fA$.
\end{itemize}
Here, we say that the fractional ideal $\fA$ is vertical if the cosupport of $t^N\fA$ is in the locus $\{t=0\}$. To such a model $\cF$, we can attach the so-called {\it model functions}. We refer to three distinct but related functions, the ones defined over $\bar{X}_r$, $X^{\an}_{\bC((t))}$, and $X^{\hyb}$, by the same name here. They are all real-valued functions defined from a regular admissible datum, and in the end the model function over $X^{\hyb}$ is the most important. We construct the ones on $\bar{X}_r$ and $X_{\bC((t))}^{\an}$ first and then define the one on $X^{\hyb}$ by using the homeomorphism $\psi$.

we define the (archimedean) model function $\varphi_{\cF}:\bar{X}_r\to\bR$ by
\begin{align*}
    \varphi_{\cF}(x):=\log\max\{|\tau_0(x)|_*,\ldots,|\tau_l(x)|_*\}.
\end{align*}

We define the model function $g_{\cF}$ attached to a regular admissible datum $\cF$ on the non-archimedean space $X_{\bC((t))}^{\an}$ using local generators of $\fA$. First, take a homogeneous coordinate $X_0,\ldots,X_N$ and denote by $U_i$ an affine open neighborhood defined by $X_i\neq0$.

By Lemma 2.6 of \cite{Favre:2020rz}, there exist $f^{(i)},g_1^{(i)},\ldots,g_{l(i)}^{(i)}\in A_r$ such that $f^{(i)}\cdot\fA(U_i)$ is generated by $g_1^{(i)},\ldots,g_{l(i)}^{(i)}$. Then, for any $x\in U_i^{\hyb}$, define
\begin{align*}
    \log|\fA|(x)=\max\{\log|g_j^{(i)}(x)|,j=1,\ldots,l(i)\}-\log|f^{(i)}(x)|,
\end{align*}
where $|g_j^{(i)}(x)|$ and $|f^{(i)}(x)|$ denote the seminorms evaluated by $x$. This definition does not depend on the choice of generators, so that $\log|\fA|$ defines a real-valued continuous function on $X^{\an}_{\bC((t))}$ by restricting it to $\pi^{-1}\{\tau(0)\}$. We denote this function by $g_{\cF}$. 

With the two model functions attached to $\cF$, we define the model function $\Phi_{\cF}$ on the so-called hybrid spaces, denoted by $X^{\hyb}$. Define the model function $\Phi_{\cF}:X^{\hyb}\to\bR$ by
\begin{align*}
    \Phi_{\cF}(x):=
    \begin{cases}
        \eta(x)\cdot \varphi_{\cF}(\psi^{-1}(x)) &\text{ if }x\in\pi_{\hyb}^{-1}(\tau(\bar{\bD}^*_r)),\\
        g_{\cF}(x) &\text{ otherwise},
    \end{cases}
\end{align*}
where we defined the function $\eta$ in Section \ref{subseq: intro of berkovich space - hybrid space}. By Theorem 2.10 of \cite{Favre:2020rz}, $\Phi_{\cF}$ is continuous.

An important fact for model functions is that they form a dense subset of all the continuous functions on the hybrid space $X^{\hyb}$. Namely, the following result is shown by Favre:
\begin{prop}[\cite{Favre:2020rz}, Theorem 2.12]\label{proposition; hybridK3 - backgrounds - model functions - density}
    Let $\cD(X^{\hyb})$ be the space of all the functions of the form $q\Phi_{\cF}-q'\Phi_{\cF'}$ where $\cF$ and $\cF'$ are regular admissible data and $q$ and $q'$ are positive rational numbers so that $q\deg(\cF)=q'\deg(\cF')$. Then, $\cD(X^{\hyb})$ is a $\bQ$-vector space that is dense in the space of real-valued continuous functions on $X^{\hyb}$ with the topology of uniform convergence.
\end{prop}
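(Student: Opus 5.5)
The plan is to deduce the statement from the lattice version of the Stone--Weierstrass theorem on the compact space $X^{\hyb}$. Compactness follows because $\pi_{\hyb}$ is proper over the compact base $\cC_{\hyb}(r)\simeq\bar{\bD}_r$. Every element of $\cD(X^{\hyb})$ is continuous by Theorem 2.10 of \cite{Favre:2020rz}. It then suffices to show three things: $\cD(X^{\hyb})$ is a $\bQ$-vector space; it is stable under $\max$ (and hence under $\min$); and it is rich enough to approximate, at any two points $x\neq y$, arbitrary prescribed values. Rational values suffice for this last point, since one passes to the closure at the end.

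\emph{Algebraic structure.} For two data $\cF,\cF'$ I would form the product datum $\cF\cdot\cF'$. Its degree is $\deg\cF+\deg\cF'$, its ideal is $\fA\fA'$, its sections are the products $\tau_i\tau'_j$, and its log resolution is a common log resolution of $\fA\fA'$. Since $\log\max_{i,j}|\tau_i\tau'_j|=\log\max_i|\tau_i|+\log\max_j|\tau'_j|$ on the complex fibres, and $\log|\fA\fA'|=\log|\fA|+\log|\fA'|$ on the non-archimedean fibre, we get $\Phi_{\cF\cdot\cF'}=\Phi_{\cF}+\Phi_{\cF'}$. In particular $\Phi_{\cF^m}=m\Phi_{\cF}$. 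With this, any rational combination of elements $q\Phi_{\cF}-q'\Phi_{\cF'}$ can be rewritten, after clearing denominators, as a single element $\frac1M(\Phi_{\cG}-\Phi_{\cG'})$ with $\deg\cG=\deg\cG'$. Hence $\cD(X^{\hyb})$ is a $\bQ$-vector space.

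\emph{Lattice property.} For data $\cF,\cF'$ of equal degree, let $\cF\vee\cF'$ be the datum whose sections are the union of the two families. Its ideal is $\fA+\fA'$, and on both kinds of fibres $\Phi_{\cF\vee\cF'}=\max(\Phi_{\cF},\Phi_{\cF'})$. For general elements I would use the identity
\[
\max(a-b,c-e)=\max(a+e,c+b)-(b+e),
\]
after putting everything over a common denominator and equalising degrees via the product construction. This shows that $\cD(X^{\hyb})$ is closed under $\max$.

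\emph{Separation and constants.} First, constants: taking the sections $\tau_i$ to be $2X_i^d$, i.e.\ the ideal unchanged but the sections scaled by the unit $2$, gives $\Phi=\eta\log2+\Phi_{(X_i^d)}$. Hence $\eta\cdot\log 2\in\cD$, which separates distinct fibres because $\eta(\tau(t))=\log r/\log|t|$ is injective in $|t|$. Similarly, replacing the sections by $t^{-1}X_i^d$ gives the difference $-\eta\log|t|=-\log r$ on every fibre, including the central one where $g$ picks up $\ord_0(t^{-1})\cdot(-\log r)$ in the normalisation $|t|=r$. So nonzero constants belong to $\cD$.

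Second, points inside a single fibre. For a homogeneous polynomial $G$ of degree $d$, consider the datum with sections $G,\,t^kX_0^d,\ldots,t^kX_N^d$. On a complex fibre its model function minus $\Phi_{(t^kX_i^d)}$ is $\eta\log\max(|G|/\|X\|^d,|t|^k)$. On the central fibre it is the analogous truncation of $\log\llbracket G\rrbracket^*$. Letting $k$ vary and choosing $G$ appropriately, these functions separate points of $\bP^N(\bC)$ and of $\bP^{N,\an}_{\bC((t))}$, the latter because the topology there is generated by the $\llbracket G\rrbracket^*$.

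The main obstacle I expect is precisely this step. One must check that these truncated functions genuinely come from \emph{vertical} ideals admitting log resolutions. One must also check that they achieve two-point interpolation with rational values; this follows by combining them with constants and $\eta\log2$ through the vector space and lattice operations. Once this is done, the lattice Stone--Weierstrass theorem yields density in $C^0(X^{\hyb})$ for the uniform topology.
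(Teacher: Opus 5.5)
Your overall route, the lattice version of Stone--Weierstrass on the compact space $X^{\hyb}$, is the standard one for this statement. The paper itself gives no proof and only quotes Favre. Your algebraic steps are correct:
\begin{itemize}
\item products of data give $\Phi_{\cF\cdot\cF'}=\Phi_{\cF}+\Phi_{\cF'}$;
\item unions of sections of equal degree give $\max$, and the identity $\max(a-b,c-e)=\max(a+e,c+b)-(b+e)$ handles general elements;
\item $\Phi_{(2X_i^d)}-\Phi_{(X_i^d)}=\eta\log 2$ and $\Phi_{(t^{-1}X_i^d)}-\Phi_{(X_i^d)}\equiv-\log r$.
\end{itemize}

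\textbf{The gap.} It lies in separating points in \emph{different} fibres. You assert that $\eta\log 2$ separates distinct fibres. But $\eta(\tau(t))=\log r/\log|t|$ depends only on $|t|$, so it does not separate the fibres over $t\neq t'$ with $|t|=|t'|$. Nothing else in your argument addresses this case. The functions you use on complex fibres are built from constant-coefficient $G$ and the powers $t^k$, so they also depend on $t$ only through $|t|$. Hence, for fixed $p\in\bP^N(\bC)$, you never separate $\psi_t(p)$ from $\psi_{-t}(p)$. Stone--Weierstrass needs every pair of points separated, so the proof does not close as written.

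\textbf{The repair.} Use a unit of the base.
\begin{itemize}
\item For $c\in\bC$ with $|c|r<1$, the function $u:=1+ct$ is invertible near $\bar\bD_r$.
\item So the datum with sections $uX_0^d,\ldots,uX_N^d$ generates the same (trivial, hence vertical) ideal as $(X_i^d)$.
\item Therefore $\Phi_{(uX_i^d)}-\Phi_{(X_i^d)}$ lies in $\cD(X^{\hyb})$. It equals $\eta\log|1+ct|$ on complex fibres and $0$ on the central fibre.
\item If $t'=e^{i\theta}t$ with $e^{i\theta}\neq 1$, take $c=s\bar t/|t|^2$ with $0<s<|t|/r$. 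The two values are $\eta\log(1+s)$ and $\eta\log|1+se^{i\theta}|$.
\item These differ because $|1+se^{i\theta}|^2=1+2s\cos\theta+s^2<(1+s)^2$.
\end{itemize}
With this added, the rest of your argument goes through.

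\textbf{Two smaller remarks.}
\begin{itemize}
\item The constants you produce are $\bQ\log r$, not all rationals. So two-point interpolation is only approximate, which is all the lattice Stone--Weierstrass theorem requires.
\item The step you flag as the main obstacle is routine. The ideal generated by $G,t^kX_0^d,\ldots,t^kX_N^d$ is the unit ideal off $\{t=0\}$. Log resolutions exist over a neighbourhood of the relatively compact set $\bar{\cX}_r$. On the central fibre you may take $G$ with Laurent-polynomial coefficients, by truncating and using the ultrametric inequality.
\end{itemize}
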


\subsection{Monge-Amp{\`e}re measures}
In the same notation as the previous subsection, we consider ample line bundles $L_1,\ldots, L_n$ and their semiample models $\cL_1,\ldots,\cL_n$. We see in Section \ref{subseq: intro of berkovich spaces - non-arch. space} that $X$ and $L_i$ induce a Berkovich variety $X^{\an}_{\bC((t))}$ and its line bundle $L^{\an}_{i,\bC((t))}$ respectively. In this subsection, we define the mixed Monge-Amp{\`e}re measures both on $\cX$ and $X_{\bC((t))}^{\an}$

First, we consider the complex analytic case. Let $h_1,\ldots,h_n$ be smooth metrics on $\cL_1,\ldots,\cL_n$ respectively. Each $h_i$ defines a curvature form denoted by $dd^c \log h_i$, which is locally defined by $dd^c \log h_i\circ s$ for any local section $s$ of $L_i$, respectively. Here, we write
\begin{align*}
    d^c=\frac{1}{2\pi i}\left(\partial-\bar{\partial}\right),
\end{align*}
following Demaily \cite{Demailly:1993if}.

We note that curvature forms are originally defined for smooth metrics, but they extend to continuous psh metrics using currents, thanks to Bedford and Taylor \cite{BedfordTaylor1976}. That is if we assume $h_i$'s are continuous psh metrics, i.e., $\log h_i\circ s$ is psh for any local section $s$ of $L_i$, the local Laplacian $dd^c \log h_i\circ s$ still makes sense if we regard $\log h_i\circ s$ as a current, and it is independent of the choice of $s$. That is, they can be patched together to define a global current $dd^c \log h_i$. It is then a closed positive (1,1)-current on $X$. Moreover, since they have locally continuous potentials, we take their wedge product to define a $(n,n)$-form $dd^c \log h_1 \wedge \cdots\wedge dd^c\log h_n$.

For any $t \in \bD$, define  $[X_t]:=dd^c\log|\pi_{\cX}-t|$. For a continuous psh metric $h_1,\ldots,h_n$ on $\cL_1,\ldots,\cL_n$ respectively, we define the mixed Monge-Amp{\`e}re measure associated to $h_1\ldots,h_n$ at $t\in\bD$ by
\begin{align*}
    \MA_t(h_1,\ldots,h_n):=dd^c \log h_1 \wedge\cdots\wedge dd^c \log h_n \wedge [X_t].
\end{align*}
We note that for $t\in\bD^*$, $dd^c\log|\pi_{\cX}-t|$ is the current of integration over $\pi^{-1}\{t\}$, while for $t=0$,
\begin{align*}
    [X_0]=\sum_{E\subset \cX_0}b_E[E],
\end{align*}
where $E$ ranges over all the irreducible components of $\cX_0$ and $b_E$ is the order of zeros of $\pi^*t$ at the generic point of $E$. 

\begin{prop}[\cite{Demailly:1993if}, Corollary 1.6]\label{proposition: backgrounds - continuity of MA measures}
    Let $U\subset X$ be a domain and $\{u_1^k\}_{k=1}^{\infty},\ldots,\{u_n^k\}_{k=1}^{\infty}$ be sequences of continuous psh functions converging locally uniformly to $u_1,\ldots,u_n$ respectively. Then, we have a weak convergence of measures
    \begin{align*}
        \lim_{k\to\infty}dd^c u_1^k\wedge\cdots\wedge dd^cu_n^k=dd^c u_1\wedge\cdots\wedge dd^cu_n.
    \end{align*}
\end{prop}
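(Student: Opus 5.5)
This is Demailly's continuity theorem for Bedford--Taylor products. I would prove it by induction on the number $n$ of potentials, first establishing the stronger statement that for any closed positive current $T$ with suitable bidegree,
\begin{align*}
    u^k\, dd^c u_2^k\wedge\cdots\wedge dd^c u_n^k\wedge T \to u\, dd^c u_2\wedge\cdots\wedge dd^c u_n\wedge T
\end{align*}
weakly on $U$. The measure statement then follows by applying $dd^c$, which is weakly continuous. The base case is the single-current statement $u^k T\to uT$. It holds because $u^k\to u$ locally uniformly and $T$ has measure coefficients, so for any test form $\varphi$ we get $|\int (u^k-u)T\wedge\varphi|\le \sup_K|u^k-u|\cdot\|T\|_K\|\varphi\|\to0$.

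For the inductive step, write $S_k:=dd^c u_2^k\wedge\cdots\wedge dd^c u_n^k$ and $S:=dd^c u_2\wedge\cdots\wedge dd^c u_n$. By the inductive hypothesis, $S_k\to S$ weakly as positive closed currents. I split
\begin{align*}
    u_1^kS_k-u_1S=(u_1^k-u_1)S_k+u_1(S_k-S).
\end{align*}

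The first term is controlled exactly as in the base case, provided the masses $\|S_k\|_K$ on compacts $K$ are uniformly bounded. This follows from the Chern--Levine--Nirenberg inequality: $\|S_k\|_K\le C_{K,L}\prod_j\|u_j^k\|_{L^\infty(L)}$ for a compact neighbourhood $L$ of $K$. The right-hand side is uniformly bounded because locally uniform convergence gives uniform sup bounds.

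The second term is the main obstacle, since $u_1$ is only continuous and weak convergence $S_k\to S$ is tested only against smooth forms. My first step is to regularize: locally, pick smooth psh (or merely smooth) functions $v_\varepsilon$ with $\sup_L|v_\varepsilon-u_1|<\varepsilon$, obtained by convolution, which is possible since $u_1$ is continuous. Then $\int v_\varepsilon(S_k-S)\wedge\varphi\to0$ by weak convergence. The remainder $\int (u_1-v_\varepsilon)(S_k-S)\wedge\varphi$ is at most $\varepsilon\,(\|S_k\|_L+\|S\|_L)\|\varphi\|$, which is uniformly small again by the CLN bound. Letting $k\to\infty$ and then $\varepsilon\to0$ finishes the induction, and taking $dd^c$ gives the stated convergence of $dd^c u_1^k\wedge\cdots\wedge dd^cu_n^k$.

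Positivity of the limit measures and localization to $U$ are routine. The only genuinely needed inputs are the CLN mass bound and uniform approximation of continuous functions by smooth ones.
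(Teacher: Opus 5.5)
Your proof is correct and is essentially the paper's approach: the paper gives no argument of its own and simply cites Demailly's Corollary 1.6. Your argument is the standard proof behind that citation, namely induction on the number of factors via $u_1^kS_k-u_1S=(u_1^k-u_1)S_k+u_1(S_k-S)$, uniform mass bounds from the Chern--Levine--Nirenberg inequality, and uniform smooth approximation of the continuous $u_1$ to pass from smooth to continuous test forms.
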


On the other hand, we have a similar theory of currents and psh functions over non-archimedean varieties due to Chambert-Loir and Ducros \cite{ChambertLoirDucros2025}. Since we only use the mixed Monge-Amp{\`e}ere measure for model metrics in the later discussion, we give the exact formula for them as a definition.

We consider models $\cL_1,\ldots,\cL_n$ of $L_1,\ldots,L_n$ over some snc model $\cX$ respectively. Each model $\cL_i$ defines a metric on $L_{i,\bC((t))}^{\an}$, which is called the model metric $g_{\cL_i}$ attached to $\cL_i$. We will not give a definition here and refer to Section 5.3 of \cite{BoucksomEriksson2021}. For such metrics, the mixed Monge-Amp{\`e}ere measure, denoted by $\MA_{\NA}(g_{\cL_1},\ldots,g_{\cL_n})$, is defined as an atomic measure
\begin{align*}
    \MA_{\NA}(g_{\cL_1},\ldots,g_{\cL_n})=\sum_{E\subset\cX_0}(\cL_1|_E\cdot\cdots\cdot\cL_n|_E)\delta_{x_E},
\end{align*}
where $E$ runs over all the irreducible components of $\cX_0$ and $\delta_{x_E}$ denotes a Dirac mass at the divisorial point $x_E$ corresponding to $E$.

For both archimedean and non-archimedean mixed Monge Amp{\`e}re measures, the following continuity is known \cite{BoucksomEriksson2021}:
\begin{prop}[\cite{BoucksomEriksson2021},Theorem 8.4]
    Let $h_i$ be a metric on $\cL_i$ and $g_i$ be a metric on $L_{i,\bC((t))}^{\an}$. we assume that there exist sequences of metrics $\{h_i^{(j)}\}_{j=1}^{\infty}$ on $\cL_i$ and $\{g_i^{(j)}\}_{j=1}^{\infty}$ on $L_{i,\bC((t))}^{\an}$ such that
    \begin{itemize}
        \item each $g_i^{(j)}$ is a model metric for any $i$ and $j$;
        \item $h_i^{(j)}\to h_i$ and $g_i^{(j)}\to g_i$ respectively as $j\to\infty$, and the convergence is uniform for every $i$.
    \end{itemize}
    Then, $\MA_{\NA}(g_1,\ldots,g_n)$ is well-defined as a positive measure on $X_{\bC((t))}^{\an}$ and we have the following weak convergence of measures:
    \begin{align*}
        &\lim_{j\to\infty}\MA_t(h_1^{(j)},\ldots,h_n^{(j)})=\MA_t(h_1,\ldots,h_n);\\
        &\lim_{j\to\infty}\MA_{\NA}(g_1^{(j)},\ldots,g_n^{(j)})=\MA_{\NA}(g_1,\ldots,g_n)
    \end{align*}
\end{prop}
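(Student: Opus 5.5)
The plan is to treat the archimedean and non-archimedean statements separately. In both cases the engine is the same: a telescoping identity, integration by parts, and a uniform mass bound. The model-function density result (Proposition \ref{proposition; hybridK3 - backgrounds - model functions - density}, restricted to a single fiber) will then upgrade estimates against test functions to weak convergence.

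\emph{Archimedean part.} Fix $t$. Write $h_i^{(j)}=h_i e^{-u_i^{(j)}}$ with $u_i^{(j)}$ continuous and $u_i^{(j)}\to0$ uniformly. Locally on $\cX$, choose trivializing sections $s_i$. The local potentials $\log h_i^{(j)}\circ s_i$ are then continuous psh functions converging locally uniformly to $\log h_i\circ s_i$.
\begin{itemize}
\item For $t\neq0$, the current $[X_t]$ is integration over the smooth fiber $\pi^{-1}\{t\}$. Restricting the potentials to this fiber and applying Proposition \ref{proposition: backgrounds - continuity of MA measures} there gives the first convergence.
\item For $t=0$, we have $[X_0]=\sum b_E[E]$ with each $E$ smooth, since $\cX$ is snc. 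We apply the same proposition on each $E$ separately and sum.
\end{itemize}
Since $\bar{X}_r$ and the fibers are compact, local weak convergence glues via a partition of unity to global weak convergence.

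\emph{Non-archimedean part.} For model metrics, the measures $\MA_{\NA}(g_1^{(j)},\ldots,g_n^{(j)})$ all have total mass $(L_1\cdots L_n)$, so the masses are uniformly bounded. By the density of model functions on $X^{\an}_{\bC((t))}$ and this uniform bound, it suffices to show the following: for every model function $\phi=g_{\cF}$, the numbers $\int\phi\,\MA_{\NA}(g^{(j)})$ form a Cauchy sequence. The key estimate, for model metrics $g_i,g_i'$ on the same bundles, is
\begin{align*}
\Bigl|\int\phi\,\MA_{\NA}(g_1,\ldots,g_n)-\int\phi\,\MA_{\NA}(g_1',\ldots,g_n')\Bigr|\le C_\phi\sum_i\sup|\log g_i-\log g_i'|,
\end{align*}
with $C_\phi$ independent of the metrics. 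To prove it, I will do the following.
\begin{enumerate}
\item Telescope, changing one metric at a time.
\item Write each difference term as $\int\phi\,dd^c u\wedge T$, where $u=\log g_i-\log g_i'$ is a model function and $T$ is a product of positive closed currents.
\item Integrate by parts to get $\int u\,dd^c\phi\wedge T$.
\item Write $dd^c\phi$ as the difference of two semipositive model curvature forms, namely those from $\cL^{d}$ twisted by $\fA$ and from $\cL^d$. Each of these has total mass against $T$ equal to a fixed intersection number. This bounds the term by $\|u\|_\infty$ times a constant depending only on $\phi$.
\end{enumerate}
The Cauchy property gives a limit functional on model functions. It extends by density to a positive Radon measure, which I define to be $\MA_{\NA}(g_1,\ldots,g_n)$. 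The same estimate, applied to two approximating sequences, shows the limit is independent of the choice, hence well-defined. The convergence statement then holds by construction.

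\emph{Main obstacle.} I expect the hardest step to be justifying the integration-by-parts identity $\int\phi\,dd^cu\wedge T=\int u\,dd^c\phi\wedge T$ for model functions in the non-archimedean setting. My first approach is purely intersection-theoretic. Pass to a common snc model $\cX''$ dominating the log resolutions of all ideals involved, so that $\phi$ and $u$ correspond to vertical $\bQ$-divisors $D_\phi$ and $D_u$. Then $\int\phi\,dd^c u\wedge\cdots$ is the intersection number $D_\phi\cdot D_u\cdot\cL_2\cdots\cL_n$ on $\cX''$, computed from the atomic formula for $\MA_{\NA}$. The symmetry of intersection numbers gives the identity. The bound $|D_\phi\cdot D_u\cdot\cdots|\le\|u\|_\infty\cdot C_\phi$ follows by writing $D_\phi$ as a difference of nef-relative divisors and using $|D_u|\le\|u\|_\infty\cdot\cX''_0$ coefficientwise. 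If this becomes messy, I will fall back on the Chambert-Loir–Ducros Bedford–Taylor theory recalled at the start of Section \ref{sec: regularpolyautom - Monge-Ampere}, where Stokes' formula for forms with compact support is available.
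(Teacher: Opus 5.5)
The paper gives no argument of its own here. It quotes Boucksom--Eriksson, Theorem 8.4, and remarks (via their Theorem 7.14 and Definition 7.2) that uniform limits of model metrics are psh-regularizable, so that their general extension of the Monge--Amp\`ere operator applies. You instead prove the needed special case directly:
\begin{itemize}
\item on the archimedean side, by fiberwise Bedford--Taylor continuity;
\item on the non-archimedean side, by the standard Lipschitz estimate for mixed Monge--Amp\`ere measures of model metrics, going back to Chambert-Loir.
\end{itemize}
You obtain that estimate from three ingredients: telescoping, the symmetry $D_\phi\cdot D_u\cdot\cL_2\cdots\cL_n=D_u\cdot D_\phi\cdot\cL_2\cdots\cL_n$ on a common snc model, and splitting $dd^c g_{\cF}$ into the curvatures of the globally generated models $p^*\cL^d\otimes\fA\cO_{\cX'}$ and $p^*\cL^d$. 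Density of model functions and the Riesz representation theorem then finish. This is sound, and the intersection-theoretic integration by parts you propose first is the right tool; no Stokes formula from Chambert-Loir--Ducros is needed. The citation places the result in a general framework with uniqueness of the extension. Your route is self-contained and covers exactly the uniform limits of model metrics used in Section 5.

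You should, however, make explicit a hypothesis that the statement omits and that you use silently. The $g_i^{(j)}$ must be \emph{semipositive} model metrics (coming from relatively nef models), and the $h_i^{(j)}$ must be continuous psh metrics. This enters in two places:
\begin{itemize}
\item the uniform mass bound, since a fixed total mass controls the total variation only for positive measures;
\item steps (2)--(4), where $T$ must be positive so that each $\omega_k\wedge T$ is a positive measure whose mass is an intersection number on the generic fiber, independent of $j$.
\end{itemize}
Without this hypothesis the conclusion is false. By density of model functions, every continuous metric is a uniform limit of model metrics, while non-nef models produce signed atomic measures whose total variation need not stay bounded. The paper's psh-regularizability remark carries the same implicit assumption. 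In the application this is harmless: each $\cF_n$ is generated by the global sections $F_{i,t}^{nl_-},F_{i,t}^{-nl_+},S^{d^n}$ of $\cL^{d^n}$, so the associated model metrics are semipositive.

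Finally, compute with $\MA_{\NA}(\cL_1,\ldots,\cL_n)=\sum_E b_E(\cL_1\cdots\cL_n\cdot E)\,\delta_{x_E}$, where $x_E$ corresponds to $b_E^{-1}\ord_E$. The paper's displayed formula drops the multiplicities $b_E$. The symmetry identity and the mass formula $(L_1\cdots L_n)$ that you rely on hold only with them.
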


We remark that Theorem 8.4 of \cite{BoucksomEriksson2021} states a stronger result than what we present in the non-archimedean case. Namely, they show that the mixed Monge-Amp{\`e}re operator is extended continuously to the set of continuous psh-regularizable metrics. We do not present details about such metrics. However, we remark that by Theorem 7.14 and Definition 7.2 of \cite{BoucksomEriksson2021}, metrics that can be written as a uniform limit of model metrics are psh-regularizable.

\subsection{Monge-Amp\`ere measure on the hybrid projective space}

In this section, we only treat the case $\cL_1=\cdots=\cL_n=\cL=\cO(1)\times \bD$. We fix a homogeneous coordinate and its Fubini-Study metric $\|\cdot\|_*$ on $\cL$. As in the previous section, we equip the model metric on $L^{\an}_{\bC((t))}$ induced from $\cL$. In this way we identify a function $\phi$ on $\bP^N(\bC)\times\bD\*$ or on $\bP^{N,\an}_{\bC((t))}$ with a metric $\|\cdot\|_*\cdot \exp(\phi)$. We denote
\begin{align*}
    &\MA_t(\phi):=\MA_t(\phi,\ldots,\phi); \\
    &\MA_{NA}(g):=\MA_{\NA}(g,\ldots,g).
\end{align*}

For any regular admissible datum $\cF$, define a measure $\mu_{t,\cF}$ for $t\in\bar{\bD}_r$ on $\pi_{\hyb}^{-1}(\tau(t))\subset\bP^{N,\hyb}$ as follows:
\begin{align*}
    \mu_{t,\cF}&=(\psi_t)_*(\MA_t(\varphi_{\cF}))\text{ for }t\neq0, \\
    \mu_{0,\cF}&=(\psi_0)_*(\MA_{\NA}(g_{\cF}))\text{ for }t=0.
\end{align*}

For the family of measures $\{\mu_{t,\cF}\}_{t\in\bar{\bD}_r}$ on $\bP^{N,\hyb}$, we have a weak convergence
\begin{align*}
    \lim_{t\to0, t\neq0}\mu_{t,\cF}=\mu_{0,\cF}.
\end{align*}

The most important fact from \cite{Favre:2020rz} is that the same weak convergence also follows for a uniform function;

\begin{prop}[\cite{Favre:2020rz}, Definition 4.1 \& Theorem 4.2] \label{prop:themainfact}

Let $\phi:\bP^N(\bC)\times\bD^*\to\bR$ be a uniform function; that is, there exists a sequence of regular admissible data $\{\cF_n\}_n$ of degree $d_n$ for each $n$ and a decreasing sequence of positive real numbers $\{\epsilon_n\}_n$ converging to $0$ such that, for any $t$ satisfying $0<|t|\leq r$,
\begin{align*}
     \sup_{z\in \bP^N(\bC)\times\{t\}}\left|\phi(z)-\frac{1}{d_n}\phi_{\cF_n}(z)\right|\leq \epsilon_n \log|t|^{-1}.
\end{align*}
Then, the sequence of functions $\{g_{\cF_n}/d_n\}_n$ on $\bP^{N,\an}_{\bC((t))}$ uniformly converges to a continuous function $g$, and we can define the continuous function $\Phi$ such that
\begin{align*}
    \Phi(z)=\begin{cases}
        \phi(\psi_t^{-1}(z)) &\text{ if }z\in\pi_{\hyb}^{-1}\{\tau(\bD^*)\};\\
        g(\psi_0^{-1}(z)) &\text{ otherwise.}
    \end{cases}
\end{align*}
Moreover, by setting
\begin{align*}
    \MA_{t,\hyb}(\Phi)&:=(\psi_t)_*\MA_t(\phi) \text{ for }t\neq0,\\
    \MA_{0,\hyb}(\Phi)&:=(\psi_0)_*\MA_0(g) \text{ for }t=0,
\end{align*}
we obtain the following weak convergence:
\begin{align*}
    \lim_{t\to0}\MA_{t,\hyb}(\Phi)=\MA_{0,\hyb}(\Phi).
\end{align*}
\end{prop}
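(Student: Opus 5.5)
The plan is to prove the three assertions in order. First, the non-archimedean functions $g_{\cF_n}/d_n$ converge uniformly. Second, $\Phi$ is the uniform limit on $\bP^{N,\hyb}$ of the rescaled hybrid model functions $\Phi_n:=\Phi_{\cF_n}/d_n$, and is therefore continuous. Third, the weak convergence $\MA_{t,\hyb}(\Phi)\to\MA_{0,\hyb}(\Phi)$ follows from a three-term estimate that uses the already known convergence $\mu_{t,\cF}\to\mu_{0,\cF}$ for a single regular admissible datum. Throughout, the archimedean functions are read through the factor $\eta$: on the fibre over $\tau(t)$ we have $\eta=\log r/\log|t|$. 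This is how $\Phi$ has to be interpreted for it to be continuous, just as in the definition of $\Phi_{\cF}$.

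\textbf{Uniform convergence and continuity.} For $m,n$, the hypothesis gives
\[
\sup_{\bP^N(\bC)\times\{t\}}\left|\tfrac{1}{d_n}\varphi_{\cF_n}-\tfrac{1}{d_m}\varphi_{\cF_m}\right|\leq(\epsilon_n+\epsilon_m)\log|t|^{-1}.
\]
Multiplying by $\eta=\log r/\log|t|$ shows that $|\Phi_n-\Phi_m|\leq(\epsilon_n+\epsilon_m)\log r^{-1}$ on every fibre with $t\neq0$. By Theorem 2.10 of \cite{Favre:2020rz}, $\Phi_n-\Phi_m$ is continuous on $\bP^{N,\hyb}$. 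Moreover, the central fibre $\pi_{\hyb}^{-1}(\tau(0))$ lies in the closure of the union of the other fibres, because $\pi_{\hyb}$ is proper and surjective and the punctured fibres accumulate on it. Hence the same bound holds for $g_{\cF_n}/d_n-g_{\cF_m}/d_m$. So $\{\Phi_n\}$ is uniformly Cauchy on the whole hybrid space. Its limit is $\eta\cdot\phi\circ\psi_t^{-1}$ on the punctured part and $g\circ\psi_0^{-1}$ on the central fibre, and it is continuous as a uniform limit of continuous functions.

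\textbf{Weak convergence.} Fix a test function $\chi$. I will split
\[
\int\chi\,\MA_{t,\hyb}(\Phi)-\int\chi\,\MA_{0,\hyb}(\Phi)
\]
into three terms:
\begin{itemize}
\item $\int\chi\,\bigl(\MA_{t,\hyb}(\Phi)-\MA_{t,\hyb}(\Phi_n)\bigr)$;
\item $\int\chi\,\bigl(\mu_{t,\cF_n}^{\mathrm{norm}}-\mu_{0,\cF_n}^{\mathrm{norm}}\bigr)$, where the measures are normalized by $d_n^N$;
\item $\int\chi\,\bigl(\MA_{0,\hyb}(\Phi_n)-\MA_{0,\hyb}(\Phi)\bigr)$.
\end{itemize}
For fixed $n$, the middle term tends to $0$ as $t\to0$ by the stated convergence for a single datum. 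Every measure involved has total mass $(\cL^N)=1$. Therefore, by Proposition \ref{proposition; hybridK3 - backgrounds - model functions - density}, it is enough to take $\chi=q\Phi_{\cF}-q'\Phi_{\cF'}$, and by linearity $\chi=\Phi_{\cF}$.

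For the third term, the metrics $g_{\cF_n}/d_n$ are model metrics converging uniformly to $g$. So Theorem 8.4 of \cite{BoucksomEriksson2021} applies: $\MA_{\NA}(g_{\cF_n}/d_n)\to\MA_{\NA}(g)$ weakly, and the third term goes to $0$ as $n\to\infty$.

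\textbf{Main obstacle.} The difficulty is the first term, which must be small \emph{uniformly in $t$}. The raw potentials $\phi$ and $\varphi_{\cF_n}/d_n$ differ by up to $\epsilon_n\log|t|^{-1}$, which is unbounded as $t\to0$, so Proposition \ref{proposition: backgrounds - continuity of MA measures} gives nothing uniform. The plan is to integrate by parts on the fibre $X_t$. Writing $u=\phi$ and $v=\varphi_{\cF_n}/d_n$, the difference $\MA_t(u)-\MA_t(v)$ telescopes into $N$ terms of the form $dd^c(u-v)\wedge T_k$, where each $T_k$ is a positive closed current of total mass $1$. Then
\[
\int_{X_t}\varphi_{\cF}\,dd^c(u-v)\wedge T_k=\int_{X_t}(u-v)\,dd^c\varphi_{\cF}\wedge T_k .
\]
Now $dd^c\varphi_{\cF}=(\omega+dd^c\varphi_{\cF})-\omega$ is a difference of two positive closed currents, each of mass of order $\deg\cF$. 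Hence the first term is bounded by
\[
\eta\cdot 2\deg(\cF)\,N\cdot\epsilon_n\log|t|^{-1}=2N\deg(\cF)\,\epsilon_n\log r^{-1},
\]
because the factor $\eta$ in $\chi=\eta\,\varphi_{\cF}$ exactly cancels $\log|t|^{-1}$. This bound is uniform in $t$ and tends to $0$ as $n\to\infty$.

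To conclude, choose $n$ large so that the first and third terms are small, then let $t\to0$ so that the middle term is small. This gives the weak convergence. The points I would check most carefully are that integration by parts is legitimate for the continuous psh potentials involved, using Bedford--Taylor theory, and that the mass bounds for $dd^c\varphi_{\cF}$ hold uniformly on $\bar{X}_r$.
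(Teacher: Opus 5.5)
Your proposal is correct and follows essentially the same route as the proof of Favre's Theorem 4.2, which the paper cites rather than reproduces. The $\eta$-rescaling (which you rightly insert into $\Phi$) turns the hypothesis into a uniform Cauchy bound, density of model functions reduces to test functions $\Phi_{\cF}$, and integration by parts against $dd^c\varphi_{\cF}=(\deg(\cF)\,\omega+dd^c\varphi_{\cF})-\deg(\cF)\,\omega$ gives an error $2N\deg(\cF)\,\epsilon_n\log r^{-1}$ independent of $t$; note the factor $\deg(\cF)$ on $\omega$, and that the masses are cohomological, hence automatically uniform in $t$.

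The only step to tighten is the density of $\pi_{\hyb}^{-1}(\tau(\bar{\bD}_r^*))$ in $\bP^{N,\hyb}$. This does not follow from properness and surjectivity of $\pi_{\hyb}$, but it is a standard fact: divisorial points are dense in the central fibre, and each $x_E$ is a limit of points $\psi_t(z_t)$ with $z_t$ tending to a general point of $E$ in a model.
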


\section{Invariant measure of the non-archimedean dynamics}\label{sec: regularpolyautom - Invariant measure of the non-archimedean dynamics}

We study dynamical systems of regular polynomial automorphisms over affine spaces over non-archimedean fields, which will be used in the later discussion. There are several known results for such dynamics as in \cite{Shu:2013ma} and \cite{Lee:2012mb}, and our work is an extension of them. Before we show our result, we first fix notation and recall previous works, mainly in \cite{Shu:2013ma}. We remark that we use same symbols such as $t$ in the previous sections for different meanings, which we only use in this section. 

\subsection{Regular polynomial automorphisms}

Let $K$ be a field of characteristic zero with complete, non-trivial, and non-archimedean valuation. We fix $x=(x_1,\ldots,x_N)$ as an inhomogeneous coordinate of $\bA^N_K$, and consider the projective space $\bP^N_K$ containing the $\bA^N_K$ with coordinate $(X:S)$ such that
\begin{align*}
    x_i=X_i/S,
\end{align*}
and thus the line of infinity is defined by the equation $S=0$. Then, any polynomial map $f=(f_1,\ldots,f_N):\bA^N_K\to\bA^N_K$ of degree $d$ is extended to a rational map $F=(F_1:\cdots:F_N:S^d):\bP^N_K\dashedrightarrow\bP^N_K$, where the $F_i$ are homogenizations of the $f_i$. As in the complex dynamics, a polynomial automorphism $f:\bA^{N}_K\to\bA^{N}_K$ is said to be {\it regular} if the intersection of the indeterminacy loci of $f$ and $f^{-1}$ when they are extended to the projective space $\bP^N_K$ is empty. For such $f$, we denote the extension to $\bP^N_K$ of $f^n$ by $F^n$ for $n\in \bZ$. In this case, Sibony \cite{Sibony99} shows the following:
\begin{prop}\label{prop: regularpolyautom - fact - l+ + l_- =N, d^l=d^l}
    Let $K$, $f$,$F$ and $F^{-1}$ be as above. we put $d_{\pm},l_{\pm}$ as follows:
    \begin{align*}
        d_{\pm}&:=\deg f^{\pm 1}; \\
        l_+ &:= \dim I(F) +1;\\
        l_- &:= \dim I(F^{-1}) +1.
    \end{align*}
    Then, we have the following relation on these numbers:
    \begin{align*}
        &l_++l_-=N, \text{ and } \\
        &d_+^{l_-}=d_-^{l_+}.
    \end{align*}
\end{prop}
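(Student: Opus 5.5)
We first replace $K$ by an algebraic closure $\bar K$. Degrees, indeterminacy loci and their dimensions are unchanged by this base change, so we may assume $K$ is algebraically closed of characteristic zero. Let $H=\{S=0\}\cong\bP^{N-1}_K$ be the hyperplane at infinity. Write $F^{top}=(F_1^{(d_+)},\ldots,F_N^{(d_+)})$ for the top-degree homogeneous parts of $f$, and similarly $G^{top}$ for $f^{-1}$. Then
\begin{align*}
I_+:=I(F)=\{F^{top}=0\}\subset H, \qquad I_-:=I(F^{-1})=\{G^{top}=0\}\subset H .
\end{align*}
The first step is the standard fact that $F$ maps $H\setminus I_+$ into $I_-$. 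Indeed, comparing top-degree terms in $f^{-1}\circ f=\mathrm{id}$ gives $G^{top}(F^{top}(X))=0$ whenever $F^{top}(X)\neq0$, unless cancellation occurs. Regularity, meaning $I_+\cap I_-=\emptyset$, is exactly what rules out such cancellation. Symmetrically, $F^{-1}$ maps $H\setminus I_-$ into $I_+$.

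\textbf{The relation $l_++l_-=N$.} We prove two inequalities.

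For $l_++l_-\le N$: the sets $I_+$ and $I_-$ are disjoint projective subvarieties of $H\cong\bP^{N-1}$. Two subvarieties of $\bP^{N-1}$ whose dimensions sum to at least $N-1$ must meet. Hence $(l_+-1)+(l_--1)\le N-2$.

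For $l_++l_-\ge N$: since $\dim I_-=l_--1$, we can choose $l_-$ linear forms $\Lambda_1,\ldots,\Lambda_{l_-}$ on $H$ with no common zero on $I_-$. Consider the $l_-$ forms $\Lambda_j\circ F^{top}$ of degree $d_+$.
\begin{itemize}
\item They vanish on $I_+$, because $F^{top}$ vanishes there.
\item They have no common zero on $H\setminus I_+$, because there $F^{top}$ takes values in $I_-$ and the $\Lambda_j$ do not all vanish on $I_-$.
\end{itemize}
So $I_+$ is cut out set-theoretically by $l_-$ hypersurfaces in $\bP^{N-1}$. This gives $\dim I_+\ge N-1-l_-$, that is, $l_+\ge N-l_-$.

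\textbf{The relation $d_+^{l_-}=d_-^{l_+}$.} We compute the degree of one variety in two ways. Let $V\subset\bA^N$ be a generic affine subspace of codimension $l_-$, so $\dim V=l_+$. The variety to study is $Z:=f^{-1}(V)=f^{-1}$-image of $V$.

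First computation. $Z$ is the zero set of $l_-$ polynomials $\lambda_j\circ f$ of degree $d_+$. On $H$ their closures meet only along $\{\Lambda_j\circ F^{top}=0\ \forall j\}=I_+$. This set has dimension $l_+-1=N-1-l_-$, which is the expected dimension. So the intersection in $\bP^N$ is proper and has no component at infinity. Bezout's theorem then gives $\deg\bar Z=d_+^{l_-}$, provided multiplicity one on the affine part. Multiplicity one holds because $f$ is an automorphism and $V$ is reduced.

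Second computation. Take a generic affine subspace $W$ of codimension $l_+$. Then $\deg\bar Z=\#(Z\cap W)=\#\{v\in V: f^{-1}(v)\in W\}$. This is the set of common zeros of $l_+$ polynomials of degree $d_-$ on $V\cong\bA^{l_+}$, and we check there are no solutions at infinity of $\bar V$.
\begin{itemize}
\item By genericity, $\bar V\cap H$ (of dimension $l_+-1$) avoids $I_-$ (of dimension $l_--1$), since $(l_+-1)+(l_--1)<N-1$.
\item On $\bar V\cap H$ the map $G^{top}$ therefore lands in $I_+$.
\item A generic $\bar W\cap H$ (of dimension $l_--1$) avoids $I_+$.
\end{itemize}
Bezout on $\bar V\cong\bP^{l_+}$ then gives $d_-^{l_+}$. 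Injectivity of $f^{-1}$ and generic transversality (using characteristic zero) show each point is counted once. Equating the two computations gives $d_+^{l_-}=d_-^{l_+}$.

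The main obstacle is the first step, the inclusion $F(H\setminus I_+)\subset I_-$, together with the properness and no-excess-at-infinity claims in the two Bezout computations. Everything else is dimension counting. For the first step I would argue directly from the top-degree terms of $f^{-1}\circ f=\mathrm{id}$ and $f\circ f^{-1}=\mathrm{id}$, using $I_+\cap I_-=\emptyset$ to exclude cancellation of the leading terms.
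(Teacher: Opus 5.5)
Your argument is correct. The paper does not actually prove this proposition. It quotes it from Sibony \cite{Sibony99}, where it is stated over $\bC$, and only remarks that the relations still hold over $\bC((t))$ because the characteristic is zero. Your proof has four ingredients:
\begin{itemize}
\item the inclusion $F(H\setminus I_+)\subset I_-$;
\item disjointness of $I_\pm$ in $H\cong\bP^{N-1}$, giving $l_++l_-\le N$;
\item cutting out $I_+$ set-theoretically by the $l_-$ forms $\Lambda_j\circ F^{\mathrm{top}}$, giving $l_++l_-\ge N$;
\item computing $\deg\overline{f^{-1}(V)}$ in two ways by B\'ezout.
\end{itemize}
This works verbatim over any algebraically closed field of characteristic zero. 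It therefore supplies exactly the justification the paper leaves implicit when it applies the statement to $f_{\bC((t))}$. The only shortcut would be an abstract field isomorphism $\overline{\bC((t))}\cong\bC$.

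A few points need tidying.

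\textbf{The key inclusion.} Your justification is off: neither regularity nor any cancellation issue is involved. The homogeneous part of degree $d_+d_-$ of $f^{-1}\circ f$ is exactly $G^{\mathrm{top}}\circ F^{\mathrm{top}}$, because the lower-order terms of $f$ and $f^{-1}$ contribute only in degrees $<d_+d_-$. Since $f^{-1}\circ f=\mathrm{id}$ has degree $1<d_+d_-$, you get $G^{\mathrm{top}}\circ F^{\mathrm{top}}\equiv 0$ for every non-affine polynomial automorphism. Regularity is used only for $l_++l_-\le N$, which is what later gives $\dim V=l_+$ in the degree computation.

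\textbf{The assumption $d_+\ge 2$.} Make it explicit (equivalently $d_-\ge2$). For affine $f$ both loci are empty and $l_++l_-=0$. Under this assumption:
\begin{itemize}
\item $I_-\neq\emptyset$, since otherwise $F^{\mathrm{top}}$ would vanish on all of $H$;
\item $I_-\neq H$, since $G^{\mathrm{top}}\not\equiv0$.
\end{itemize}
Hence $1\le l_-\le N-1$, which is what the Krull-type dimension bound for the $l_-$ forms $\Lambda_j\circ F^{\mathrm{top}}$ on $\bP^{N-1}$ requires.

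\textbf{The first B\'ezout count.} The genericity of $V$ must already include $\overline{V}\cap H\cap I_-=\emptyset$; you state this only in the second count. The multiplicity-one claim is best phrased as follows. The intersection multiplicity along $\overline{Z}$ is the length of the complete-intersection scheme at its generic point. That point lies in $\bA^N$, where the scheme is the scheme-theoretic preimage $f^{-1}(V)$, isomorphic to $V$ via $f$ and hence reduced.
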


Sibony studied such dynamical systems \cite{Sibony99} in the complex setting, i.e., for the same polynomial automorphisms but with coefficients in $\bC$. In particular, they constructed the following escape-rate functions:
\begin{align*}
    &G^+(x)=\lim_{n\to\infty}\frac{1}{d_+^n}\log^+\|f^n(x)\|;\\
    &G^-(x)=\lim_{n\to\infty}\frac{1}{d_-^n}\log^+\|f^{-n}(x)\|,
\end{align*}
where $\|\cdot\|$ is any norm on $\bC^N$. For the later discussion, we also 
put 
\begin{align*}
    G(x):=\max\left\{G^+(x),G^-(x)\right\}.
\end{align*}
They show that $G^{\pm}$ are well-defined, continuous pluri-subharmonic functions on $\bC^N$. They also define (un-)stable currents
\begin{align*}
    T_{\pm}:=\dd^c G^{\pm},
\end{align*}
and show the following properties of these currents:

\begin{prop}[\cite{Sibony99}, 1.10]
    Let $f:\bC^N\to\bC^N$ be a regular polynomial automorphism and $d_{\pm}$ and $l_{\pm}$ as above. Then, there exists positive and closed (1,1)-currents $T_+$ and $T_-$ such that $\mu:=T_+^{\wedge l_-}\wedge T_-^{\wedge l_+}$ is a positive measure invariant under the pull-back action of $f$.
\end{prop}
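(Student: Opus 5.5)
We will follow Sibony's construction in \cite{Sibony99}. The proof has two parts: first establish the escape-rate functions $G^{\pm}$ with their functional equations, then define $T_\pm$ and $\mu$ and check positivity, closedness, and invariance.

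\emph{Step 1: the escape-rate functions.} Write $F=(F_1:\cdots:F_N:S^{d_+})$. Regularity means $I(F)\cap I(F^{-1})=\emptyset$ in the hyperplane at infinity. From this we derive the basic filtration estimate: there are $R>0$ and $c>0$ such that on
\[
V^+:=\{\|x\|\ge R,\ \|x\|\ \text{comparable to its components away from } I(F)\}
\]
one has $\|f(x)\|\ge c\|x\|^{d_+}$, with $f(V^+)\subset V^+$; symmetrically for $f^{-1}$ on $V^-$. Outside $V^+\cup V^-$ the region is compact.

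From this we obtain $\log^+\|f(x)\|\le d_+\log^+\|x\|+C$ everywhere. Hence $d_+^{-n}\log^+\|f^n\|$ is a uniformly Cauchy sequence after adding $C\sum_{k>n} d_+^{-k}$. Its limit $G^+$ is continuous and satisfies $G^+\circ f=d_+G^+$; likewise $G^-\circ f^{-1}=d_-G^-$. Each $d_+^{-n}\log^+\|f^n\|$ is continuous psh, being a maximum of $\log$ of moduli of polynomials. Hence $G^\pm$, as uniform limits, are continuous psh.

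\emph{Step 2: the currents and the measure.} Set $T_\pm:=dd^cG^\pm$, which are positive closed $(1,1)$-currents. Continuity of the potentials makes the Bedford--Taylor product $\mu=T_+^{l_-}\wedge T_-^{l_+}$ well defined and positive. The functional equations give
\[
f^*T_+=d_+T_+,\qquad f^*T_-=d_-^{-1}T_-
\]
(using $G^-\circ f=d_-^{-1}G^-$). Since $f$ is an automorphism, pullback commutes with Bedford--Taylor products. Therefore
\[
f^*\mu=d_+^{l_-}d_-^{-l_+}\mu=\mu
\]
by Proposition \ref{prop: regularpolyautom - fact - l+ + l_- =N, d^l=d^l}.

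\emph{Step 3: support and mass.} Invariance alone does not exclude $\mu=0$, and finiteness of the mass requires compact support. This is the main obstacle. We would show $\operatorname{Supp}\mu\subset K=\{G^+=G^-=0\}$, which is compact by the filtration.

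On $V^+$ one has $G^+=\log\|x\|+O(1)$ with $\log\|x\|$ comparable to $\log\max$ of the $l_-$-many coordinates controlling the direction. In suitable local coordinates, $G^+$ there is a uniform limit of functions of the form $\log\max\{1,|g_1|,\ldots,|g_{l_-}|\}$ plus a pluriharmonic term, built from coordinates transverse to $I(F)$ (whose dimension is $l_+-1$). The complex analogue of Proposition \ref{prop: pluripotential theory - lth wedge of ddclogmax(1,|g1|,...,|gl-1|) is 0} then gives $(dd^cG^+)^{l_-+1}=0$ there. Since $G^-$ is pluriharmonic on $\{G^->0\}$ modulo this structure, combining the two gives vanishing of $\mu$ off $K$. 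Where $G^+>0$ is pluriharmonic-like in the remaining directions, we apply a max-decomposition.

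Finally, positivity and total mass follow from a comparison with Fubini--Study currents. We have $G^+\le\log^+\|x\|+C$, and $G^+\ge\log^+\|x\|-C$ on $V^+$; the analogous bounds hold for $G^-$. These yield
\[
\int\mu=\int (dd^c\log^+\|x\|)^N=1
\]
by the standard comparison theorem for Bedford--Taylor masses of functions with logarithmic growth. This step, equating the mass with the degree computation $l_++l_-=N$, is where we expect the most technical work.
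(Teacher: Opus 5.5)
Your Steps 1--2 already prove the statement as formulated, which asserts only a positive invariant measure. They follow the same route the paper uses for its non-archimedean analogue, Theorem \ref{thm: regularpolyauto - nonarchdynamics - invariancemain}; the complex statement itself is only quoted from Sibony. The functional equations give $f^*T_+=d_+T_+$ and $f^*T_-=d_-^{-1}T_-$. Pull-back by an automorphism commutes with Bedford--Taylor products. Finally, $d_+^{l_-}=d_-^{l_+}$ makes the factors cancel.

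One inference in Step 1 is wrong, although it does not damage the statement. The bound $\log^+\|f(x)\|\le d_+\log^+\|x\|+C$ only shows that $d_+^{-n}\log^+\|f^n\|+C\sum_{k>n}d_+^{-k}$ is \emph{decreasing}. It does not make the sequence uniformly Cauchy, and on all of $\mathbb{C}^N$ the sequence is not uniformly Cauchy. Indeed, $G^+-d_+^{-n}\log^+\|f^n\|=d_+^{-n}\bigl(G^+-\log^+\|\cdot\|\bigr)\circ f^n$ has infinite supremum, because $f^n$ is onto and $G^+$ vanishes on the unbounded set $K^+$. So you only get a monotone psh limit with $0\le G^+\le\log^+\|x\|+C'$. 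That local boundedness is all the Bedford--Taylor product and the invariance computation require. If you want continuity, you need two further facts. The first is the reverse estimate $\log\|f(x)\|\ge d_+\log\|x\|-C$ on the forward-invariant set $V^+$. The second is that forward orbits starting in a compact set stay in a bounded region until they enter $V^+$. This is the argument in the proof of Proposition \ref{prop: regularpolyautom - NA dynamics - local unif conv of G}.

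Step 3 is not needed here, and as sketched it would not go through, for three reasons.

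\emph{The exponent.} The vanishing you need is $(dd^cG^+)^{l_-}=0$ on $\{G^+>0\}$. The weaker $(dd^cG^+)^{l_-+1}=0$ does not kill $T_+^{l_-}\wedge T_-^{l_+}$ there. Your extra power comes from keeping the constant $1$ inside the maximum. Choose coordinates so that $\{z_1=\cdots=z_{l_-}=0\}$ misses $I(F^{-1})$. Then on compact subsets of $\{G^+>0\}$, $G^+$ is the uniform limit of $d_+^{-n}\log\max_{j\le l_-}|f^n_j|$. Dividing by a locally non-vanishing $f^n_j$ leaves $\log\max\{1,|f^n_k/f^n_j|:k\ne j\}$, which involves only $l_--1$ functions, plus a pluriharmonic term.

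\emph{Pluriharmonicity of $G^-$.} $G^-$ is not pluriharmonic on $\{G^->0\}$ unless $l_+=1$; consider $h_1\times h_2$ on $\mathbb{C}^4$ for two H\'enon maps of the same degree. The correct statement is $(dd^cG^-)^{l_+}=0$ there.

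\emph{The mass computation.} You cannot compare $T_+^{l_-}\wedge T_-^{l_+}$ directly with $(dd^c\log^+\|x\|)^N$, since neither $G^+$ nor $G^-$ is bounded below by $\log^+\|x\|-C$. The route in the paper (the second half of Theorem \ref{thm: regularpolyauto - nonarchdynamics - invariancemain}, and Proposition 2.9 of \cite{Guedj:2002yq} in the complex case) uses the two sharp vanishing statements to prove $\mu=(dd^c\max\{G^+,G^-\})^N$. Then $\max\{G^+,G^-\}=\log^+\|x\|+O(1)$ by Proposition \ref{prop:regularpolyautom-Gf-growth}. This gives unit mass and support in the compact set $\{G^+=G^-=0\}$.
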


To show the hybrid convergence of families of regular polynomial automorphisms, we need an analogous result in the non-archimedean setting. 

We note that since we consider $\bC((t))$, a field of characteristic $0$, the algebraic relation, such as $l_++l_-=N$ and $d_+^{l_-}=d_-^{l_+}$, still holds in the non-archimedean situation.

The non-archimedean version of the above functions and measures are studied by Kawaguchi \cite{Shu:2013ma} and Lee \cite{Lee:2012mb}, and in particular, the dynamical Green functions $G_{\NA}^{\pm}$ have already been constructed:

\begin{prop}\label{prop: regularpolyautom - NA dynamics - local unif conv of G}
    Let $f:\bA^{N,\an}_K\to \bA^{N,\an}_K$ be a regular polynomial automorphism and $F,G,d_{\pm}$ be as above. The functions
    \begin{align*}
        G^{\pm}_{\NA}=\lim_{n\to\infty}\frac{1}{d_{\pm}^n}\log^+\|f^{\pm n}\|
    \end{align*}
    are well-defined continuous functions with the limit uniform on each compact set of $\bA^{N,\an}_K$.
\end{prop}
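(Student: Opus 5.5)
We follow Sibony's complex argument, replacing estimates on $\bC^N$ by estimates on seminorms. The key input is the non-archimedean analogue of the filtration inequality for regular automorphisms. Write $\|x\|:=\max\{1,|x_1|,\ldots,|x_N|\}$ for $x\in\bA^{N,\an}_K$, evaluated via the seminorm.

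First we establish the basic estimate
\begin{align*}
    \bigl|\log\|f(x)\|-d_+\log\|x\|\bigr|\leq C \quad\text{on } V^+:=\{x : \|f^{-1}\text{-side}\ \text{controlled}\},
\end{align*}
together with a global upper bound $\log^+\|f(x)\|\leq d_+\log\|x\|+C$. The precise statement we aim for is the standard filtration. Regularity, $I(F)\cap I(F^{-1})=\emptyset$, gives by a Nullstellensatz argument over $K$ a constant $C$ such that
\begin{align*}
    \max\bigl\{\|f(x)\|^{1/d_+},\|f^{-1}(x)\|^{1/d_-}\bigr\}\geq e^{-C}\|x\|
\end{align*}
for all rigid points $x\in K^N$ with $\|x\|$ large. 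Concretely, there are homogeneous polynomial identities $X_i^{m}=\sum_j A_{ij}F_j+\sum_k B_{ik}(F^{-1})_k$ modulo $S$. Because these are polynomial identities with coefficients in $K$, the ultrametric inequality turns them into the displayed bound directly on every multiplicative seminorm, so the bound holds on all of $\bA^{N,\an}_K$, not only on rigid points. This is the step where non-archimedean geometry actually simplifies things: the triangle inequality is replaced by the max, so no density argument is needed.

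Next we show uniform convergence of $d_+^{-n}\log^+\|f^n\|$ on compacts. Set $u_n:=d_+^{-n}\log^+\|f^n\|$. The global bound $\log^+\|f(y)\|\leq d_+\log^+\|y\|+C$, which follows since each coordinate of $f$ is a polynomial of degree $d_+$ and hence $|f_i(y)|\leq \max_I|c_I|\,\|y\|^{d_+}$, gives $u_{n+1}-u_n\leq C/d_+^{n+1}$. The lower bound is the harder direction. On the region $V^+=\{\|f(x)\|^{1/d_+}\geq e^{-C}\|x\|,\ \|x\|\geq R\}$, the filtration shows that $f(V^+)\subset V^+$ and that $\log\|f(x)\|\geq d_+\log\|x\|-C'$, so $|u_{n+1}-u_n|\leq C''d_+^{-n-1}$ there. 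On the complement $\{\|x\|\leq R\}\cup V^-$, points either stay bounded, where $u_n\to 0$ uniformly, or enter $V^+$. The uniform bound then follows as in Sibony's proof: for $x$ in a compact set $B$, let $n_0(x)$ be the entry time into $V^+$. The difference $u_n-\lim u_m$ is bounded by $\sum_{k\geq n_0}C''d_+^{-k}$ plus a term controlled by $\sup_B\log^+\|f^{n_0}\|/d_+^{n}$ before entry. This gives a telescoping series bounded by $\sum_n C d_+^{-n}$ uniformly on compacts.

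Since each $u_n$ is continuous, being a maximum of $\log|\text{polynomial}|$, and the convergence is uniform, $G^+_{\NA}$ is well-defined and continuous. The case of $G^-_{\NA}$ follows by applying the same argument to $f^{-1}$, which is also regular. We expect the main obstacle to be making the lower bound, that is, the invariance of $V^+$, uniform across all non-rigid points. We plan to handle it entirely through the ultrametric identities derived from regularity, rather than by approximation by rigid points.
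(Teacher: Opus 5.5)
Your proposal is correct and follows essentially the paper's route. You derive the filtration on all seminorms from the resultant identity and the ultrametric inequality; the paper uses the same identity (its $P_{ij},Q_{ij},R_i$) but cites Kawaguchi's Proposition 3.5 and Lee for the resulting invariance and for the estimate on $U^+_{\delta,\varepsilon}$. The core is the same entry-time argument: geometric tails after entry, boundedness before entry, uniformly on compacts. You get continuity for free from local uniform convergence instead of treating $J_f^+$ separately by nets, which is a simplification. One small correction: for $n\ge n_0(x)$ the post-entry tail should be $\sum_{k\geq \max(n,n_0(x))}C''d_+^{-k}$ rather than $\sum_{k\geq n_0}$, otherwise the bound does not tend to $0$ uniformly.
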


We note that the well-definedness is already shown by Kawaguchi for the set of the classical points and by Lee for the whole space $\bA^{N,\an}_K$. Lee also showed the local uniform convergence of $G^{+}_{\NA}$ (resp. $G^-_{\NA}$) on the set of point with forward (resp. backward) unbounded orbits. Therefore it is enough to show the uniform convergence around $K^{\pm}_f$. We temporarily assume the claim is true until stating the main lemma, Proposition \ref{prop: regularpolyautom - NAdynamics - Kawaguchi's results}.

Note that, for each $n$, the function $\log^+\|f^{\pm n}(x)\|/d_{\pm}^n$ is plurisubharmonic in the sense of Chambert-Loir and Ducros \cite{ChambertLoirDucros2025}, hence so is $G_{\NA}^{\pm}$. By taking $\dd^c$ of these functions, we obtain the two positive closed (1,1)-currents
\begin{align*}
    T_{\pm,\NA}:=\dd^c G^{\pm}_{\NA}.
\end{align*} 
By construction, we see that $(f^{\pm 1})^*T_{\pm,\NA}=d_{\pm}^{-1}\cdot T_{\pm,\NA}$.

In this section, we show the following proposition, which is necessary to show our main result:
\begin{thm}\label{thm: regularpolyauto - nonarchdynamics - invariancemain}
    As above, let $f:\bA^N_K\to\bA^N_K$ be a regular polynomial automorphism of degree $d_+$ with $\deg f^{-1}=d_-$, $l_{\pm}:=\dim I(F^{\pm 1})+1$. Set
    \begin{align*}
        G_{\NA}^{\pm}(x):=\lim_{n\to\infty}\frac{1}{d^n_{\pm}}\log^+\|f^{\pm n}(x)\|,
    \end{align*}
    and $T_{\pm,\NA}:=\dd^c G_{\NA}^{\pm}$. Then, the positive measure $\mu_{\NA}:=T_{+,\NA}^{\wedge l_-}\wedge T_{-,\NA}^{\wedge l_+}$ is invariant under the action $f^*$ of the pull-back by the map $f$. Moreover, the following equality of measures on $\bA^{N,\an}_{K}$ holds:
    \begin{align*}
        \mu_{\NA}=\left(\dd^c\max\{G_{\NA}^{+},G_{\NA}^{-}\}\right)^N.
    \end{align*}
\end{thm}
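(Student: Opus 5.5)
The plan is to prove the second assertion, the identity
\begin{align*}
\mu_{\NA}=(\dd^c G_{\NA})^N, \qquad G_{\NA}:=\max\{G^+_{\NA},G^-_{\NA}\},
\end{align*}
first, and to deduce invariance from it. Invariance follows from the functional equations
\begin{align*}
G^+_{\NA}\circ f=d_+G^+_{\NA}, \qquad G^-_{\NA}\circ f^{-1}=d_-G^-_{\NA},
\end{align*}
which are immediate from the definition. They give $f^*T_{+,\NA}=d_+T_{+,\NA}$ and $f^*T_{-,\NA}=d_-^{-1}T_{-,\NA}$, hence
\begin{align*}
f^*\mu_{\NA}=d_+^{l_-}d_-^{-l_+}\mu_{\NA}=\mu_{\NA}
\end{align*}
by Proposition \ref{prop: regularpolyautom - fact - l+ + l_- =N, d^l=d^l}. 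Pull-back commutes with Bedford--Taylor products because $f$ is an analytic automorphism of $\bA^{N,\an}_K$ and the potentials are continuous. This part is formal, so the real content is the equality of measures.

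For the equality I would follow Sibony's complex argument, expanding $(\dd^c\max\{G^+,G^-\})^N$ by the ``max'' formula. First, using Kawaguchi's filtration (Proposition \ref{prop: regularpolyautom - NAdynamics - Kawaguchi's results}), I would establish the standard facts:
\begin{itemize}
\item $G^+_{\NA}$ is locally of the form $\log\max$ of $l_+$ analytic functions in a neighbourhood of $\{G^+_{\NA}>G^-_{\NA}\}$. Near infinity away from $I(F)$, $G^+_{\NA}=\log\|x\|+O(1)$ is controlled by the coordinates transverse to $I(F^{-1})$.
\item $G^-_{\NA}$ is locally of the form $\log\max$ of $l_-$ analytic functions near $\{G^->G^+\}$.
\end{itemize}
More precisely, on the open set $\{G^+_{\NA}>G^-_{\NA}\}$ I want to show $(\dd^c G^+_{\NA})^{l_-+1}=0$ and, symmetrically, $(\dd^c G^-_{\NA})^{l_++1}=0$ on $\{G^-_{\NA}>G^+_{\NA}\}$. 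The route is to write $G^+_{\NA}$ as a uniform limit of $d_+^{-n}\log\max\{1,|g_1|,\dots,|g_{l_-}|\}$ with the $g_i$ given by components of $f^n$ in suitable charts, apply Proposition \ref{prop: pluripotential theory - lth wedge of ddclogmax(1,|g1|,...,|gl-1|) is 0} with $l=l_-+1$, and pass to the limit by continuity of Monge--Amp\`ere operators under uniform convergence. The uniformity near the filled sets is exactly the assumed part of Proposition \ref{prop: regularpolyautom - NA dynamics - local unif conv of G}.

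Next, on $\{G^+_{\NA}>G^-_{\NA}\}$ the measure $(\dd^c G_{\NA})^N$ equals $(\dd^c G^+_{\NA})^N=0$, and likewise on the opposite open set. So $(\dd^c G_{\NA})^N$ is supported on $\{G^+_{\NA}=G^-_{\NA}\}$. To identify it there I would use the identity
\begin{align*}
(\dd^c\max\{u,v\})^N=\sum_{k}(\dd^c\max\{u,v\})\wedge\cdots
\end{align*}
in its standard form:
\begin{align*}
(\dd^c \max(u,v))^N=\sum_{j=0}^{N-1}\dd^c\max(u,v)\wedge(\dd^c u)^{j}\wedge(\dd^c v)^{N-1-j}-\text{(terms)}.
\end{align*}
Rather than work with this expansion, I would run the cleaner perturbation argument. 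Set $G_\varepsilon:=\max\{G^+_{\NA}, G^-_{\NA}+\varepsilon\}$, and apply the ``max'' identity and locality on the open sets to get
\begin{align*}
(\dd^c G_{\NA})^N=\lim_{\varepsilon\to0}(\dd^c G_\varepsilon)^N.
\end{align*}
Then I would show inductively that
\begin{align*}
(\dd^c G_{\NA})^N=(\dd^c G^+_{\NA})^{l_-}\wedge(\dd^c G_{\NA})^{l_+},
\end{align*}
and then that
\begin{align*}
(\dd^c G^+_{\NA})^{l_-}\wedge(\dd^c G_{\NA})^{l_+}=(\dd^c G^+_{\NA})^{l_-}\wedge(\dd^c G^-_{\NA})^{l_+}.
\end{align*}
Each step swaps one factor $\dd^c G_{\NA}$ for $\dd^c G^{\pm}_{\NA}$ by writing the difference as $\dd^c(G_{\NA}-G^\pm_{\NA})\wedge S$. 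Here $G_{\NA}-G^{\pm}_{\NA}\ge0$ vanishes on $\{G^\pm_{\NA}\ge G^\mp_{\NA}\}$, and the residual positive current $S$ kills the complementary open set by the vanishing results above. The integration by parts of Proposition \ref{prop: pluripotential theory - lth wedge of ddclogmax(1,|g1|,...,|gl-1|) is 0}, cutting off at level $\varepsilon$, then gives zero.

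The main obstacle I expect is justifying these integration-by-parts and locality steps in the Chambert-Loir--Ducros framework on the non-compact $\bA^{N,\an}_K$: test forms must have compact support, and we need a uniform bound on the total mass of the mixed currents. I would handle this by working in $\bP^{N,\an}_K$, where $G^\pm_{\NA}-\log^+\|x\|$ extends to bounded continuous metrics on $\cO(1)$ away from the indeterminacy loci. The key point is that $G_{\NA}=\log^+\|x\|+O(1)$ globally, since $I(F)\cap I(F^{-1})=\emptyset$. This gives finite masses, and it also requires verifying the local $\log\max$ description near infinity.
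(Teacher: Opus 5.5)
Your architecture is the paper's: you get invariance from $G^+_{\NA}\circ f=d_+G^+_{\NA}$, $G^-_{\NA}\circ f^{-1}=d_-G^-_{\NA}$ and $d_+^{l_-}=d_-^{l_+}$, which you handle correctly. You then get the identity by exchanging one factor $\dd^c G_{\NA}$ at a time, killing the residual current with a vanishing statement for powers of $\dd^c G^\pm_{\NA}$. The gap is that the vanishing statement you set out to prove is off by one, and it is too weak for the exchange. You aim at $(\dd^c G^+_{\NA})^{l_-+1}=0$ on $\{G^+_{\NA}>G^-_{\NA}\}$, using a log-max of $1,|g_1|,\ldots,|g_{l_-}|$, so $l=l_-+1$. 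Consider the last exchange of the first phase:
\begin{align*}
&(\dd^c G^+_{\NA})^{l_--1}\wedge(\dd^c G_{\NA})^{l_++1}-(\dd^c G^+_{\NA})^{l_-}\wedge(\dd^c G_{\NA})^{l_+}\\
&\qquad=\dd^c(G_{\NA}-G^+_{\NA})\wedge(\dd^c G^+_{\NA})^{l_--1}\wedge(\dd^c G_{\NA})^{l_+}.
\end{align*}
On the open set $\{G^-_{\NA}>G^+_{\NA}\}$, where $G_{\NA}-G^+_{\NA}>0$, the residual current is $(\dd^c G^+_{\NA})^{l_--1}\wedge(\dd^c G^-_{\NA})^{l_+}$. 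To kill it you need $(\dd^c G^-_{\NA})^{l_+}=0$ there, and the factor $(\dd^c G^+_{\NA})^{l_--1}$ gives you nothing. The last exchange of the second phase fails the same way. In whatever order you exchange, exponents $l_\mp+1$ only bring you to $(\dd^c G^+_{\NA})^{l_--1}\wedge(\dd^c G^-_{\NA})^{l_+-1}\wedge(\dd^c G_{\NA})^2$. For H\'enon maps ($N=2$, $l_\pm=1$) your claim reads $(\dd^c G^\pm_{\NA})^2=0$, whereas the argument needs pluriharmonicity of $G^\pm_{\NA}$ on $\{G^\pm_{\NA}>G^\mp_{\NA}\}$. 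Your first bullet, which attributes $l_+$ functions to $G^+_{\NA}$, also has the count wrong: it should be $l_-=\dim I(F^{-1})+1$.

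What is needed, and what the paper proves in Proposition~\ref{prop: regularpolyauto - nonarchdynamics - GreensoutsideofJuliaset}, is $(\dd^c G^\pm_{\NA})^{l_\mp}=0$ on $\bA^{N,\an}_K\setminus K^\pm_f$, which contains $\{G^\pm_{\NA}>G^\mp_{\NA}\}$.

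For $G^+_{\NA}$, first change coordinates linearly so that $\{x_1=\cdots=x_{l_-}=0\}$ at infinity is disjoint from $I(F^{-1})$. Forward orbits of a compact $K\subset\bA^{N,\an}_K\setminus K^+_f$ then eventually lie in the cone $W_A$ around $I(F^{-1})$ (Proposition~\ref{prop: regularpolyauto - nonarchdynamics - LimitOfUnboundedPoints}; your proposal only hints at this dynamical input). Hence $G^+_{\NA}=\lim_n d_+^{-n}\log\max\{|f^n_1|,\ldots,|f^n_{l_-}|\}$ uniformly on $K$.

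This is a log-max of exactly $l_-$ analytic functions with no constant term. Where $f^n_i$ is invertible it equals $\log|f^n_i|+\log\max\{1,|f^n_j/f^n_i|\}_{j\le l_-,\,j\neq i}$, that is, a pluriharmonic function plus a log-max of $1$ and $l_--1$ functions. So the vanishing proposition applies with $l=l_-$, and the limit passes by continuity of Bedford--Taylor products.

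Adjoining the constant $1$ costs precisely the one power the final exchange needs. The paper's displayed decomposition, with its extra term $1/|f^n_1|$, has the same slip, although its stated conclusion is the sharp one. The $1$ can be dropped because $\max_{i\le l_-}|f^n_i|\to\infty$ uniformly on $K$.

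The $G_\varepsilon$ perturbation and the passage to $\bP^{N,\an}_K$ are unnecessary. Every identity here is local and tested against compactly supported forms.
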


We remark that, in \cite{irokawa2023hybrid}, the author already showed the main theorem of this paper in the case of H\'{e}non mappings over $\bC^2$, but did not show Theorem \ref{thm: regularpolyauto - nonarchdynamics - invariancemain}. The main result can be shown by the techniques of hybrid spaces by Boucksom-Favre-Jonsson \cite{Sebastien:2015uz} and  \cite{Sebastien:2017le}, though as we will see in the rest of the section, this theorem requires local arguments of $\bA^{N,\an}_K$, which is impossible by their theory and here we essentially need to use non-archimedean pluripotential theory by Chambert-Loir-Ducros \cite{ChambertLoirDucros2025}.

\subsection{Non-archimedean filled Julia sets}\label{subsec: regularpolyautom - NA dynamics - Julia sets}

In this subsection, we study some properties of forward and backward filled Julia sets $K_f^{\pm}$. Especially, the goal of this subsection, the key result for Theorem \ref{thm: regularpolyauto - nonarchdynamics - invariancemain} is as follows:
\begin{prop} \label{prop: regularpolyauto - nonarchdynamics - GreensoutsideofJuliaset}
    In the same notation as Theorem \ref{thm: regularpolyauto - nonarchdynamics - invariancemain}, we set 
    \begin{align*}
        K^{\pm}_{f}:=\left\{z\in\bA^{N,\an}_K \middle| \|f^{\pm n}(z)\|\not\to\infty\right\},
    \end{align*}
    and $\overline{K}^{\pm}_f$ to be its closure in $\bP^{N,\an}_K$. Then, we have the following relation:
    \begin{align*}
        \left(\dd^c G_{\NA}^{\pm}\right)^{l_{\mp}}=0 \quad \text{on} \quad\bA^{N,\an}_K\setminus K^{\pm}_f.
    \end{align*}
\end{prop}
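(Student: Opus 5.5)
The plan is to reduce the statement to the local vanishing result, Proposition \ref{prop: pluripotential theory - lth wedge of ddclogmax(1,|g1|,...,|gl-1|) is 0}, applied on a neighbourhood of infinity. We then pass to the limit in the approximants of $G^{+}_{\NA}$ and spread the vanishing over the whole escaping set using the invariance of the currents. We treat the sign $+$; the sign $-$ is symmetric, with $f$ replaced by $f^{-1}$.

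\emph{Step 1 (coordinates and filtration).} After a linear change of coordinates we may assume that
\[
I(F^{-1})=\{S=0,\ X_{l_-+1}=\cdots=X_N=0\},
\]
which has dimension $l_--1$. By regularity, $I(F)\cap I(F^{-1})=\emptyset$. Following Sibony's and Kawaguchi's filtration, we consider, for $R\gg1$ and small $\delta>0$, the region
\[
V^+:=\{\|x\|>R,\ \max_{j>l_-}|x_j|<\delta\max_{i\le l_-}|x_i|\}\subset \bA^{N,\an}_K,
\]
which is a neighbourhood of $I(F^{-1})$ near infinity in $\bP^{N,\an}_K$. We aim to show three things, using the ultrametric inequality on the homogeneous components of $f$:
\begin{itemize}
\item $f(V^+)\subset V^+$;
\item $\|f(x)\|\ge c\|x\|^{d_+}$ on $V^+$;
\item every point of $\bA^{N,\an}_K\setminus K^+_f$ satisfies $f^n(z)\in V^+$ for some $n$, so that $\bA^{N,\an}_K\setminus K^+_f=\bigcup_n f^{-n}(V^+)$.
\end{itemize}
The first two come from the fact that $F$ contracts the hyperplane at infinity minus $I(F)$ onto $I(F^{-1})$. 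The third is the statement that escaping points eventually enter $V^+$. This step, carried out for Berkovich points rather than just classical points, is the part I expect to be the main obstacle. I would first establish it on classical points via Kawaguchi's estimates, and then extend it by density of classical points together with continuity of $x\mapsto|g(x)|$. The extension needs some care, because $K_f^+$ is defined by non-escape and the escape condition is not obviously closed.

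\emph{Step 2 (local vanishing for approximants on $V^+$).} For $m\ge0$ set $u_m:=d_+^{-m}\log^+\|f^m\|$. On $V^+$ we have $f^m(V^+)\subset V^+$, so for $w\in V^+$ the maximum in $\|f^m(w)\|$ is attained among the first $l_-$ coordinates and exceeds $1$. Cover $V^+$ by the open sets $W_i:=\{|f^m_i|\geq\frac12\max_{k\le l_-}|f^m_k|\}^{\circ}$ for $i\le l_-$. On $W_i$ the function $f^m_i$ is invertible, and
\[
u_m=d_+^{-m}\Bigl(\log|f^m_i|+\log\max\Bigl\{1,\Bigl|\tfrac{f^m_k}{f^m_i}\Bigr|\ (k\le l_-,\ k\ne i)\Bigr\}\Bigr).
\]
The first term is pluriharmonic by Poincar\'e--Lelong, and the second is of the form treated in Proposition \ref{prop: pluripotential theory - lth wedge of ddclogmax(1,|g1|,...,|gl-1|) is 0} with $l_--1$ analytic functions. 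Hence $(\dd^c u_m)^{l_-}=0$ on each $W_i$, and therefore on $V^+$.

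\emph{Step 3 (limit and transport).} By Proposition \ref{prop: regularpolyautom - NA dynamics - local unif conv of G}, $u_m\to G^+_{\NA}$ locally uniformly. The continuity of Chambert-Loir--Ducros Bedford--Taylor products under local uniform convergence then gives $(\dd^c G^+_{\NA})^{l_-}=0$ on $V^+$. Finally, $G^+_{\NA}\circ f=d_+G^+_{\NA}$ and $f$ is an analytic automorphism, so
\[
(f^n)^*(\dd^c G^+_{\NA})^{l_-}=d_+^{nl_-}(\dd^c G^+_{\NA})^{l_-}.
\]
It follows that the measure also vanishes on $f^{-n}(V^+)$ for every $n$. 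By Step 1 these sets exhaust $\bA^{N,\an}_K\setminus K^+_f$, which proves the claim.
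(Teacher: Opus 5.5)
Steps 2 and 3 are sound, given Step 1. Step 1, however, has two problems.

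\emph{The normalization.} Writing $I(F^{-1})=\{S=0,\ X_{l_-+1}=\cdots=X_N=0\}$ presupposes that $I(F^{-1})$ is a linear subspace of the hyperplane at infinity, and you do not justify this. A priori it is only an algebraic subset of dimension $l_--1$. A generic linear change of coordinates gives only $I(F^{-1})\cap\{X_1=\cdots=X_{l_-}=0\}=\emptyset$, which is exactly what the paper assumes. Without linearity your cone need not be a neighbourhood of $I(F^{-1})$, so neither $f(V^+)\subset V^+$ nor the identity $\|f^m\|=\max_{i\le l_-}|f^m_i|$ on $V^+$ is available.

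The repair is to take $V^+:=N^-_{\delta,\varepsilon}$. This set has the three properties you need:
\begin{itemize}
\item it is a neighbourhood of $I(F^{-1})$ at infinity;
\item it is forward invariant, because Proposition~\ref{prop: regularpolyautom - NAdynamics - Kawaguchi's results} applied to $f^{-1}$ gives $f^{-1}(V^-_{\delta,\varepsilon})\subset V^-_{\delta,\varepsilon}$ and $f$ is bijective;
\item for $\delta$, and then $\varepsilon$, small, a resultant identity for the top-degree part of $f^{-1}$ together with $X_1,\dots,X_{l_-}$ gives $\max_{i\le l_-}|x_i|\ge c\|x\|$ on it.
\end{itemize}
Then run Step 2 with $\tilde u_m:=d_+^{-m}\log\max_{i\le l_-}|f^m_i|$. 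This differs from $u_m$ by at most $d_+^{-m}\log c^{-1}$ (Lemma~\ref{lmm: regularpolyauto - nonarchdynamics - GreenFctionIndepOfNorms}), and your covering by the $W_i$ goes through verbatim.

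\emph{Item (c).} You leave it open, and the density route cannot close it, because, as you note yourself, escape is not a closed condition. It is also unnecessary. The paper states Proposition~\ref{prop: regularpolyautom - NAdynamics - Kawaguchi's results} directly on $\bA^{N,\an}_K$; its estimates are ultrametric inequalities for values $|g(z)|$ of polynomials, valid at every multiplicative seminorm. So every escaping point enters $U^+_{\delta,\varepsilon}$. There norms grow geometrically, since $\|f(z)\|\ge\delta\varepsilon^{1-d_+}\|z\|$ with $\delta\varepsilon^{1-d_+}>1$. Once $\|z\|^{d_+d_--1}>\delta^{-1-d_-}$ and $\varepsilon\|f(z)\|>1$, the point $w=f(z)$ lies in $N^-_{\delta,\varepsilon}$, because $\|f^{-1}(w)\|=\|z\|<\delta^{1+d_-}\|z\|^{d_+d_-}\le\delta\|w\|^{d_-}$.

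\emph{Comparison with the paper.} With these repairs your argument is a valid variant of the paper's. The paper uses neither a forward-invariant region nor the transport identity $(f^n)^*(dd^cG^+_{\NA})^{l_-}=d_+^{nl_-}(dd^cG^+_{\NA})^{l_-}$. It fixes a compact $K$ in the escaping set, uses Proposition~\ref{prop: regularpolyauto - nonarchdynamics - LimitOfUnboundedPoints} to get $f^n(K)\subset W_A$ for all large $n$, and approximates $G^+_{\NA}$ uniformly on $K$ by $d_+^{-n}\log\max_{i\le l_-}|f^n_i|$ directly. You instead prove vanishing on one absorbing open set and spread it by invariance, which keeps all estimates on a single region.

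Your Step 2 is in fact more careful than the paper's corresponding step. You divide by the locally dominant $f^m_i$ on an open cover and drop the constant $1$. The paper divides by $f_1^n$ on all of $K$, without justifying that it is nonvanishing there, and keeps $1/|f_1^n|$ inside the maximum. As written, that feeds $l_-$ rather than $l_--1$ functions into Proposition~\ref{prop: pluripotential theory - lth wedge of ddclogmax(1,|g1|,...,|gl-1|) is 0}.
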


Following Kawaguchi \cite{Shu:2013ma}, we prepare several notations.

We consider the homogeneous coordinate $[X_1:\cdots:X_N:S]$ of $\bP^N$ same as the previous subsection. Let $F=[F_1:\cdots:F_N:S^{d_+}]:\bP^N\dashrightarrow\bP^N$ be a homogeneous lift of $f$ and $G=[G_1:\cdots:G_N:S^{d_-}]$ be that of $f^{-1}$. By assumption, $F_i$ (resp. $G_i$) for any $i=1,\ldots,N$ is a homogeneous polynomial of degree $d_+$ (resp. $d_-$) with coefficients in $K$, and $\{F_i,G_j\}_{i,j=1}^{N}$ have no common zero other than $X_1=\cdots=X_N=S=0$. By well-known arguments about resultants, there exist $P_{ij},Q_{ij}\in K[X_1,\ldots,X_N]$ such that
We write $F^+:=F$ and $F^-:=G$.
\begin{align*}
    \sum_{j=1}^N P_{ij}(X)F_j(X_1,\ldots,X_N,0)+\sum_{j=1}^N Q_{ij}(X)G_j(X_1,\ldots,X_N,0)=X_i^m
\end{align*}
for any $i=1,\ldots,N$. Then, since we can write
\begin{align*}
    F_i(X_1,\ldots,X_N,S)=F_i(X_1,\ldots,X_N,0)+SF_i'(X_1,\ldots,X_N,S),
\end{align*}
there exists $R_i(X,S)\in K[X_1,\ldots,X_N,S]$ such that
\begin{align*}
    \sum_{j=1}^N P_{ij}(X)F_j(X_1,\ldots,X_N,S)+\sum_{j=1}^N Q_{ij}(X)G_j(X_1,\ldots,X_N,S)+SR_i(X_1,\ldots,X_N,S)=X_i^m
\end{align*}
for each $i=1,\ldots,N$. By taking homogeneous part, we may assume $P_{ij},Q_{ij}, R_i$ are all homogeneous. Let $C'$ be the maximum of absolute values of all the coefficients of $P_{ij},Q_{ij}, R_i$, and $C:=\max\{1,C'\}$. Take small $\delta$ and $\varepsilon$ so that
\begin{align*}
    &\delta<\frac{1}{C}; \\
    &\varepsilon<\min\left\{\frac{\delta^{1/d_+}}{C},\frac{\delta^{1/d_-}}{C}\right\}; \\
    &\varepsilon^{d_{\pm}-1}<\delta.
\end{align*}

Using these numbers $\varepsilon$ and $\delta$, we define the following sets:
\begin{align*}
    &N_{\delta,\varepsilon}^{\pm}:=\left\{ z\in\bA^{N,\an}_K \middle| 1<\varepsilon\|z\| \text{ and } \|f^{\pm1}(z)\|<\delta\|z\|^{d_{\pm}} \right\}; \\
    &V_{\delta,\varepsilon}^{\pm}:=\bA^{N,\an}_K\setminus N_{\delta,\varepsilon}^{\pm}; \\
    &U_{\delta,\varepsilon}^{\pm}:=\left\{z\in\bA^{N,\an}_K\middle| 1<\varepsilon\|z\|\text{ and } \|f^{\pm1}(z)\|\geq \delta\|z\|^{d_{\pm}}\right\}
\end{align*}
These three sets are defined on $\bA^{N,\an}_K$. We can naturally extend them to $\bP^{N,\an}_K$ as follows: as above, we fix $\bP^N_K=\Proj K[X_1,\ldots,X_N,S]$. Since any point $x\in \bP^{N,\an}_K$ defines an equivalence class of the seminorms as we saw in Section \ref{subseq: intro of berkovich spaces - non-arch. space}. We put 
\begin{align*}
&\|X(z)\|:=\max(\llbracket X_1(z)\rrbracket,\ldots,\llbracket X_N(z)\rrbracket);  \\
&\|S(z),X(z)\|:=\max(\llbracket S(z)\rrbracket,\llbracket X_1(z)\rrbracket,\ldots,\llbracket X_N(z)\rrbracket), 
\end{align*}
which are defined up to scalar multiplication. With these notations,
\begin{align*}
    &\tilde{N}_{\delta,\varepsilon}^{\pm}:=\left\{ z\in\bP^{N,\an}_K \middle| \llbracket S(z)\rrbracket<\varepsilon\|X(z)\| \text{ and } \|F^{\pm}(z)\|<\delta\|S(z),X(z)\|^{d_{\pm}} \right\}; \\
    &\tilde{V}_{\delta,\varepsilon}^{\pm}:=\bP^{N,\an}_K\setminus \tilde{N}_{\delta,\varepsilon}^{\pm}; \\
    &\tilde{U}_{\delta,\varepsilon}^{\pm}:=\left\{z\in\bP^{N,\an}_K\middle| \|X(z)\|>\llbracket S(z)\rrbracket/\varepsilon\text{ and } \|F^{\pm}(z)\|\geq \delta\|S(z),X(z)\|^{d_{\pm}}\right\}.
\end{align*}
\begin{prop}[\cite{Shu:2013ma}, Proposition 3.5]\label{prop: regularpolyautom - NAdynamics - Kawaguchi's results}
    In the above notation, the following relations hold:
    \begin{itemize}
        \item $f(U_{\delta,\varepsilon}^{+})\subset U_{\delta,\varepsilon}^+$ and $f(V_{\delta,\varepsilon}^{+})\subset V_{\delta,\varepsilon}^+$;
        \item $\tilde{U}_{\delta,\varepsilon}^+\cap I(F)=\emptyset$;
        \item $\bigcup_{n\geq0} f^{-n}(U_{\delta,\varepsilon}^+)=\bA^{N,\an}_K\setminus K_f^+$. 
    \end{itemize}
    Also, the same statements for $f^{-1}$ and $G$ also hold if we replace $\tilde{U}_{\delta,\varepsilon}^+$ and $K_f^+$ by $\tilde{U}_{\delta,\varepsilon}^-$ and $K_f^-$ respectively.
\end{prop}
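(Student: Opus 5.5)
The plan is to redo Kawaguchi's argument directly on seminorms. The only inputs are multiplicativity, the ultrametric inequality, and the elementary bound
\[
|P(z)|\le \|P\|\cdot\max(1,\|z\|)^{\deg P}
\]
for $z\in\bA^{N,\an}_K$. Here $\|P\|$ is the maximum of the absolute values of the coefficients of $P$, and $\|z\|=\max_i|x_i(z)|$. This bound follows from the ultrametric inequality applied monomial by monomial, so classical and non-classical points are treated uniformly.

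\textbf{The resultant estimate.} First I evaluate the resultant identity at a point $z$ with $S=1$. I treat the $+$ side; the $-$ side is symmetric, with the roles of $f$ and $f^{-1}$ exchanged. For $\|z\|>1/\varepsilon>1$, choose $i$ with $|x_i(z)|=\|z\|$. The identity then gives
\[
\|z\|^m\le C\max\Bigl\{\|z\|^{m-d_+}\|f(z)\|,\ \|z\|^{m-d_-}\|f^{-1}(z)\|,\ \|z\|^{m-1}\Bigr\}.
\]
The first key step is to read off from this that on $N^{+}_{\delta,\varepsilon}$ we have $\|f^{-1}(z)\|\ge C^{-1}\|z\|^{d_-}$. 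To see this:
\begin{itemize}
\item The first term is at most $C\delta\|z\|^m<\|z\|^m$, since $\delta<1/C$.
\item The third term is $C\|z\|^{m-1}<\|z\|^m$, once $\varepsilon$ is small relative to $C$; this is where $\varepsilon<\delta^{1/d_\pm}/C$ is used.
\item So the middle term must dominate.
\end{itemize}
In words, a point near infinity that is not expanded by $f$ is strongly expanded by $f^{-1}$.

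\textbf{Forward invariance of $U^{+}_{\delta,\varepsilon}$.} Let $z\in U^+_{\delta,\varepsilon}$. From $\|f(z)\|\ge\delta\|z\|^{d_+}$ and $\varepsilon^{d_+-1}<\delta$ we get
\[
\|f(z)\|>\|z\|>1/\varepsilon.
\]
If $f(z)$ were in $N^+_{\delta,\varepsilon}$, the previous step applied at $f(z)$ would give $\|z\|=\|f^{-1}(f(z))\|\ge C^{-1}\|f(z)\|^{d_-}$. This is impossible, because $\|f(z)\|>\|z\|>1/\varepsilon$ and $\varepsilon<1/C$. Hence $f(z)\in U^+_{\delta,\varepsilon}$.

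\textbf{Forward invariance of $V^{+}_{\delta,\varepsilon}$.} Note that $V^+_{\delta,\varepsilon}=\{\|z\|\le 1/\varepsilon\}\cup U^+_{\delta,\varepsilon}$. So it remains to treat $\|z\|\le1/\varepsilon$. If $f(z)\in N^+_{\delta,\varepsilon}$, the same estimate gives
\[
1/\varepsilon\ge\|z\|\ge C^{-1}\|f(z)\|^{d_-}>C^{-1}\varepsilon^{-d_-},
\]
which contradicts the smallness of $\varepsilon$.

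\textbf{$\tilde U^+_{\delta,\varepsilon}$ avoids $I(F)$.} Every point of $I(F)$ satisfies $\llbracket S\rrbracket=0$ and $\llbracket F_j\rrbracket=0$ for all $j$. On $\tilde U^+_{\delta,\varepsilon}$ we have $\|F(z)\|\ge\delta\|S(z),X(z)\|^{d_+}>0$, because $\|X(z)\|>0$ there. So $\tilde U^+_{\delta,\varepsilon}\cap I(F)=\emptyset$.

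\textbf{Escaping points enter $U^{+}_{\delta,\varepsilon}$.} For $z\in U^+_{\delta,\varepsilon}$, iterating $\|f(w)\|\ge\delta\|w\|^{d_+}$ along the orbit gives
\[
\|f^n(z)\|\ge \bigl(\delta^{1/(d_+-1)}\|z\|\bigr)^{d_+^n}\delta^{-1/(d_+-1)}\to\infty,
\]
since $\delta^{1/(d_+-1)}\|z\|>\delta^{1/(d_+-1)}/\varepsilon>1$. Hence $\bigcup_n f^{-n}(U^+_{\delta,\varepsilon})$ lies in the complement of $K^+_f$.

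Conversely, let $\|f^n(z)\|\to\infty$. Suppose first that $f^n(z)\in N^+_{\delta,\varepsilon}$ for all $n\ge1$. Then $\|f^{n-1}(z)\|\ge C^{-1}\|f^n(z)\|^{d_-}\ge\|f^n(z)\|$, because $\|f^n(z)\|>1/\varepsilon>C$. So the norms along the orbit are non-increasing and the orbit is bounded, a contradiction. Therefore some iterate lies in $V^+_{\delta,\varepsilon}$, and by invariance all later iterates stay there. Since the orbit is unbounded, some iterate has norm $>1/\varepsilon$, and that iterate lies in $U^+_{\delta,\varepsilon}$.

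\textbf{Expected obstacle.} I expect the delicate part to be the bookkeeping of constants: choosing a single $\varepsilon$ that makes the third term negligible and makes all the contradictions above strict for both signs simultaneously. I would first try to tighten the hypotheses on $\delta$ and $\varepsilon$, for instance by also requiring $\varepsilon^{d_\pm-1}<C^{-1}$, so that each inequality closes with a strict margin. The passage to non-classical points should be harmless, because every step uses only the seminorm inequalities above.
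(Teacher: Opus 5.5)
Your proof is correct. The paper gives no argument of its own here. It imports Kawaguchi's Proposition 3.5, which is established for classical points, and uses it on all of $\bA^{N,\an}_K$ without comment.

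Your route is Kawaguchi's resultant dichotomy: on $N^+_{\delta,\varepsilon}$ one has $\|f^{-1}(z)\|\ge C^{-1}\|z\|^{d_-}$, and symmetrically on $N^-_{\delta,\varepsilon}$. The difference is that you check every step using only multiplicativity and the ultrametric inequality for an arbitrary seminorm. That supplies exactly the justification for non-classical points that the paper omits. An alternative would be to run the classical argument over the completed residue field $\mathcal{H}(z)$, where $z$ becomes a rational point and all constants are unchanged; your direct version makes that detour unnecessary.

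The obstacle you anticipate does not occur, so do not tighten the hypotheses, since that would prove a weaker statement than the one claimed. The condition $\varepsilon^{d_\pm-1}<\delta<1$ already forces $d_\pm\ge 2$. Also $\varepsilon<\delta^{1/d_\pm}/C<1/C\le 1$. Hence $\varepsilon^{d_\pm-1}\le\varepsilon<C^{-1}$ holds automatically.

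You should state explicitly where $d_\pm\ge 2$ enters. It gives $C^{-1}\|w\|^{d_-}>\|w\|$ whenever $\|w\|>1/\varepsilon$, and it gives $C\varepsilon^{d_--1}<1$. These are exactly the inequalities that close your contradictions: for $f(U^+_{\delta,\varepsilon})\subset U^+_{\delta,\varepsilon}$, for $f(V^+_{\delta,\varepsilon})\subset V^+_{\delta,\varepsilon}$, and in the escaping-orbit argument. The $-$ side works the same way with $d_+$ and $d_-$ exchanged.
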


Before showing Proposition \ref{prop: regularpolyauto - nonarchdynamics - GreensoutsideofJuliaset}, we first show Proposition \ref{prop: regularpolyautom - NA dynamics - local unif conv of G} from the above Proposition.

The proof is parallel to the same statement for complex regular polynomial automorophisms, and same as the arguments used by the author in \cite{irokawa2023hybrid} for the dynamics of non-archimedean H\'enon maps.

\begin{proof}[Proof of Proposition \ref{prop: regularpolyautom - NA dynamics - local unif conv of G}]
    We note that $K_f^+$ is the set of zeros of $G_{\NA}^+$, that is, the continuity on the interior of $K^+_f$ is trivial. Also by Lee, the uniform convergence, and hence the continuity, on $U_{\delta,\varepsilon}^+$ is already shown. 

    First we show that the continuity of $G^+_{\NA}$ around the boundary of $K^+_f$ denoted by $J_f^+$.

    Fix any point $z\in J_f^+$ and take any net $(z_{\lambda})\to z$ where $\lambda\in \Lambda$ is a directed set. Since $G^+_{\NA}=0$ on $K_f^+$, we may assume $(z_{\lambda})\subset \bA^{N,\an}_K\setminus K_f^+$. Also by $\overline{V}^+_{\delta,\varepsilon}\cap J_f^+=\emptyset$, we may also assume $(z_{\lambda})\subset\bA^{N,\an}_K\setminus U^+_{\delta,\varepsilon}.$ For any $\lambda$, we set
    \begin{align*}
        n(\lambda):=\min\left\{n\middle| f^n(z_{\lambda})\in U_{\delta,\varepsilon}^+\right\}.
    \end{align*}
    Since $f^n(z)\not\in V_{\delta,\varepsilon}^+$ for any $n$ and $z_{\lambda}\to z$, 
    \begin{align*}
        \lim_{\lambda\in\Lambda}n(\lambda)=\infty.
    \end{align*}
    Now we take any $e>0$.\footnote{We do a usual $\varepsilon-N$ argument here, though we use $e$ as the error term as $\varepsilon$ is already used.} Take $m=m(e)$ so large that
    \begin{align*}
        \frac{\log \varepsilon^{-1}}{d^m}+\sum_{i>m}\frac{\log\delta^{-1}}{d^i}<e,
    \end{align*}
    then there exists $\lambda_0\in\Lambda$ such that for any $\lambda\geq \lambda_0$, $n(\lambda)>m.$ For such $\lambda$, we obtain
    \begin{align*}
        \log^+\|f^{n(\lambda)}(z_{\lambda})\|\leq\log\varepsilon^{-1},
    \end{align*}
    that is,
    \begin{align*}
        G_{\NA}^+(z_{\lambda})&=\frac{1}{d^{n(\lambda)}}G_{\NA}^+(f^{n(\lambda)}(z_{\lambda})) \\
        &<\frac{1}{d^{n(\lambda})}\log^+\|f^{n(\lambda)}(z_{\lambda})\|+\sum_{i>n(\lambda)}\frac{\log\delta^{-1}}{d^i} \\
        &<\frac{\log \varepsilon^{-1}}{d^m}+\sum_{i>m}\frac{\log\delta^{-1}}{d^i} \\
        &<e,
    \end{align*}
    which shows the continuity.

    Now we show the uniform convergence. We set $B(\varepsilon)$ to be the closed disk of $\bA^{N,\an}_K$ centered at origin with radius $\varepsilon^{-1}$. Since $\{B(\varepsilon)\}$ covers the whole $\bA^{N,\an}_K$, it is enough to show that 
    \begin{align*}
        G_{n,\NA}^+(z):=\frac{1}{d^n}\log^+\|f^n(z)\|
    \end{align*}
    converges uniformly to $G_{NA}^+$ on $B_{\varepsilon}$ as $n\to \infty$. We note that 
    \begin{align*}
        V^+_{\delta,\varepsilon}=B(\varepsilon)\cup U^+_{\delta,\varepsilon}
    \end{align*}
    and by Proposition \ref{prop: regularpolyautom - NAdynamics - Kawaguchi's results}, $f(B(\varepsilon))\subset V^+_{\delta,\varepsilon}$.
    To show that, we take any $e>0$ and show that there exists $m=m(e)$ such that
    \begin{align*}
        \left|G_{\NA}^+(z)-G_{n,\NA}^+(z)\right|<e
    \end{align*}
    for any $n>m$ and $z\in B(\varepsilon)$. To do that, set 
    \begin{align*}
        n(z):=\min\{n|f^n(z)\in U_{\delta,\varepsilon}^+\}.
    \end{align*}
    If $z$ is of bounded orbit, set $n(z)=\infty$. Take $M$ so large that
    \begin{align*}
        \frac{1}{d^M}\left(\log \varepsilon^{-1}+\log C+\sum_{i\geq1}\frac{\log\delta^{-1}}{d^i}\right)<2\varepsilon.
    \end{align*}
    Then, for $n>\max(M,n(z))$, we have
    \begin{align*}
        \left|G_{\NA}^+(z)-G_{n,\NA}^+(z)\right|&<\frac{1}{d^{n(z)}}\left|G^+_{\NA}\left(f^{n(z)}(z)\right)-G^+_{n,\NA}\left(f^{n(z)}(z)\right)\right| \\
        &<\frac{1}{d^{n(z)}}\sum_{i\geq 1}\frac{\log\delta^{-1}}{d^i}<e.
    \end{align*}
    Thus, it is enough to show the same equality for $M\leq n<n(z)$. For such $n$, we have by definition $f^n(z)\in B(\varepsilon)$. Hence
    \begin{align*}
        0<\frac{1}{d^n}\log|f^n(z)|<\frac{1}{d^n}\log\varepsilon^{-1}.
    \end{align*}
    for $G_{\NA}^+$,
    \begin{align*}
        0&<G_{\NA}^+(z)=\frac{1}{d^{n(z)}}G^+\left(f^{n(z)}(z)\right) \\
        &<\frac{1}{d^{n(z)}}\left(\log\left|f(f^{n(z)-1}(z))\right|+\sum_{i\geq1}\frac{\log\delta^{-1}}{d^i}\right) \\
        &<\frac{1}{d^M}\left(\log\varepsilon^{-1}+\log C+\sum_{i\geq 1}\frac{\log\delta^{-1}}{d^i}\right).
    \end{align*}
    We put them together to obtain
    \begin{align*}
        \left|G_{\NA}^+(z)-G_{n,\NA}^+(z)\right|<\frac{2}{d^M}\log\varepsilon^{-1}+\frac{1}{d^M}\left(\log C+\sum_{i\geq1}\frac{\log\delta^{-1}}{d^i}\right)<e.
    \end{align*}

    \end{proof}

Now, we go back to the proof Proposition \ref{prop: regularpolyauto - nonarchdynamics - GreensoutsideofJuliaset}. We prepare several technical lemmas. 

\begin{prop}\label{prop: regularpolyauto - nonarchdynamics - LimitOfUnboundedPoints}
    Let $f:\bA^{n,\an}_K\to\bA^{n,\an}_K$ be a regular polynomial automorphism, and $F^n \colon \mathbb P_K^{N,\an} \dashrightarrow \mathbb P_K^{N,\an}$ be the extension of $f^n$ to the projective space for any $n\in\bZ$. We suppose $\{z_1=\cdots=z_{l_-}\}\cap I(F^{-1})=\emptyset$.
    For a positive number $A>0$, we set
    \begin{align*}
        W_A:=\left\{z\in\bA^{N,\an}_K\middle|\max\left(\left|z_{l_-+1}\right|,\ldots,\left|z_{N}\right|\right)\leq A\max\left(\left|z_{1}\right|,\ldots,\left|z_{l_-}\right|\right)\right\}.
    \end{align*}
    Then, for any sufficiently large $A>0$, we have the following statement: for any compact set $K\subset \bA^{N,\an}\setminus K_f^+$ and for sufficiently large $n$ (depending on $K$), $f^n(K)\subset W_A$.
\end{prop}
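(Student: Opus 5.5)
The plan is to show that forward-escaping points are pushed, uniformly on compacta, into a small neighbourhood of $I(F^{-1})$ in the hyperplane at infinity. Then we check that this neighbourhood, intersected with $\bA^{N,\an}_K$, lies inside $W_A$ once $A$ is large. I read the hypothesis as saying that the linear subspace $L:=\{S=X_1=\cdots=X_{l_-}=0\}$ at infinity is disjoint from $I(F^{-1})$.

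\emph{Step 0: choice of $A$.} The set of points of the hyperplane at infinity with $\max_{i\le l_-}\llbracket X_i\rrbracket^*\le A^{-1}\max_{j>l_-}\llbracket X_j\rrbracket^*$ is a compact neighbourhood of $L^{\an}$. As $A\to\infty$ it shrinks to $L^{\an}$. Since $I(F^{-1})^{\an}$ is compact and disjoint from $L^{\an}$, for $A$ large there is an open neighbourhood $\Omega\subset\bP^{N,\an}_K$ of $I(F^{-1})^{\an}$ with $\Omega\cap\bA^{N,\an}_K\subset W_A$. Concretely, $\Omega$ is a set where the ratio of the tail coordinates to the head coordinates, computed in the normalized seminorm $\llbracket\cdot\rrbracket^*$, stays below $A$.

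\emph{Step 1: uniform escape into $U^+_{\delta,\varepsilon}$.} By Proposition \ref{prop: regularpolyautom - NAdynamics - Kawaguchi's results}, $\bA^{N,\an}_K\setminus K_f^+=\bigcup_n f^{-n}(U^+_{\delta,\varepsilon})$. I would cover $K$ by the open sets $f^{-n}(\mathrm{int}\,U^+_{\delta',\varepsilon'})$. To get openness, shrink $\delta$ and enlarge $\varepsilon$ slightly, as is standard. By compactness a single $n_0$ then satisfies $f^{n_0}(K)\subset U^+_{\delta,\varepsilon}$, and by forward invariance $f^n(K)\subset U^+_{\delta,\varepsilon}$ for all $n\ge n_0$. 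On $U^+_{\delta,\varepsilon}$ we have $\|f(z)\|\ge\delta\|z\|^{d_+}\ge(\delta\varepsilon^{1-d_+})\|z\|$, and $\delta\varepsilon^{1-d_+}>1$ by the choice of $\varepsilon$. Hence $\min_K\|f^n\|\to\infty$ uniformly.

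\emph{Step 2: $F$ contracts the boundary at infinity into $I(F^{-1})$.} Let $\overline{U}$ be the closure of $\tilde U^+_{\delta,\varepsilon}$ in $\bP^{N,\an}_K$. It is compact, and its closed conditions keep it disjoint from $I(F)$ (Proposition \ref{prop: regularpolyautom - NAdynamics - Kawaguchi's results}), so $F$ is continuous on $\overline U$. I claim that $F(\overline U\cap\{S=0\})\subset I(F^{-1})^{\an}$.

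The argument is algebraic, so it transfers to seminorms. Since $f^{-1}\circ f=\mathrm{id}$, we have $G\circ F=S^{e}\cdot(X_1,\ldots,X_N,S)$ for some $e>0$ up to a common factor. If $p$ lies at infinity, $p\notin I(F)$, and $F(p)\notin I(F^{-1})$, evaluate this identity at $p$. The right-hand side vanishes while $G(F(p))\neq0$, which is a contradiction. For a seminorm $p$, I would apply the same argument to the seminorm $\llbracket G_i(F(\cdot))(p)\rrbracket=\llbracket S(p)\rrbracket^{e}\llbracket X_i(p)\rrbracket=0$. This says the seminorm $F(p)$ kills all $G_i$, i.e. lies in $I(F^{-1})^{\an}$. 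The earlier resultant identity with $P_{ij},Q_{ij},R_i$ makes this precise.

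\emph{Step 3: conclusion by compactness.} The sets $\overline U\cap\{\|X\|\ge R\,\llbracket S\rrbracket\}$ are compact and decrease, as $R\to\infty$, to $\overline U\cap\{S=0\}$. Their images under $F$ therefore eventually lie in the open set $\Omega$. So there is $R$ with $f(z)\in\Omega\cap\bA^{N,\an}_K\subset W_A$ whenever $z\in U^+_{\delta,\varepsilon}$ and $\|z\|\ge R$. By Step 1, for $n$ large we have $f^{n-1}(K)\subset U^+_{\delta,\varepsilon}\cap\{\|z\|\ge R\}$, hence $f^n(K)\subset W_A$.

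I expect the main obstacle to be Step 2 together with the compactness/continuity in Step 3 in the Berkovich setting. One must make sure that $F$ extends continuously to $\overline U$, including non-rigid points at infinity. One must also make sure that the pointwise containment $F(\overline U\cap\{S=0\})\subset I(F^{-1})^{\an}$ upgrades to a uniform neighbourhood statement. I would first try to handle this directly with the normalized seminorms $\llbracket\cdot\rrbracket^*$ and the explicit resultant identity, which give quantitative bounds of the form $\max_i\llbracket G_i(F(z))\rrbracket^*\le C\,\llbracket S(z)\rrbracket^*$ on $\overline U$.
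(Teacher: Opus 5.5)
Your proof is correct and follows the same route as the paper. Both arguments use compactness of $K$ and the escape criterion to get a uniform $n_0$ with $f^{n_0}(K)\subset U^+_{\delta,\varepsilon}$, then uniform escape to infinity. They then use that $F$ maps $\{S=0\}\setminus I(F)$ into $I(F^{-1})$, together with continuity of $F$ on $\tilde U^+_{\delta,\varepsilon}$ and the fact that $\tilde W_A$ is a neighbourhood of $I(F^{-1})$ for large $A$. Your extra care makes rigorous several steps the paper only sketches: shrinking $\delta$ to make the cover open, the seminorm-level identity $G\circ F=S^{d_+d_--1}(X_1,\ldots,X_N,S)$ at non-rigid points at infinity, the decreasing-compacts argument for a uniform neighbourhood, and requiring $A$ large rather than arbitrary.
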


We note that the assumption $\{z_1=\cdots=z_{l_+}\}\cap I(F^{-1})=\emptyset$ can always be assumed to be true by taking a coordinate change. If we set $\tilde{W}_A\subset\bP^{N,\an}_K$ to be the closure of $W_A$, it is a neighborhood of $I(F^{-1})$.

To show the Proposition \ref{prop: regularpolyauto - nonarchdynamics - LimitOfUnboundedPoints}, we need the following lemma:
\begin{lmm}
    With the same notation as Proposition \ref{prop: regularpolyautom - NAdynamics - Kawaguchi's results}, and for any $\delta$ and $\varepsilon$ satisfying the conditions required there, there exists a sequence of positive numbers $\varepsilon_n>0$ such that $\varepsilon_n\to 0$ as $n\to\infty$ and
    \begin{align*}
        f^n(U^+_{\delta,\varepsilon})\subset U_{\delta,\varepsilon_n}^+.
    \end{align*}
\end{lmm}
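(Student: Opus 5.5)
We will prove the lemma by exhibiting an explicit decay of the ratio $1/\|f^n(z)\|$ along forward orbits starting in $U^+_{\delta,\varepsilon}$. The idea is that on $U^+_{\delta,\varepsilon}$ the norm grows at least like a power $d_+$ up to the constant $\delta$, so the orbit escapes uniformly fast.

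We first establish a uniform one-step growth estimate. For $z\in U^+_{\delta,\varepsilon}$ we have $\varepsilon\|z\|>1$ and $\|f(z)\|\geq\delta\|z\|^{d_+}$. We rewrite this as
\begin{align*}
    \|f(z)\|\geq \delta\|z\|^{d_+-1}\cdot\|z\|> \delta\varepsilon^{-(d_+-1)}\|z\|.
\end{align*}
The choice $\varepsilon^{d_+-1}<\delta$ made before Proposition \ref{prop: regularpolyautom - NAdynamics - Kawaguchi's results} gives $\lambda:=\delta\varepsilon^{-(d_+-1)}>1$. Hence $\|f(z)\|>\lambda\|z\|$ on $U^+_{\delta,\varepsilon}$. (We assume $d_+\geq2$, which holds for a regular automorphism; if one prefers the sharper estimate, one can use $\|f(z)\|\geq\delta\|z\|^{d_+}$ iteratively to get doubly exponential growth, but the linear factor $\lambda$ suffices.)

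Next we iterate. By the first bullet of Proposition \ref{prop: regularpolyautom - NAdynamics - Kawaguchi's results}, $f(U^+_{\delta,\varepsilon})\subset U^+_{\delta,\varepsilon}$, so every $f^k(z)$ with $z\in U^+_{\delta,\varepsilon}$ again lies in $U^+_{\delta,\varepsilon}$. Applying the one-step estimate along the orbit gives
\begin{align*}
    \|f^n(z)\|>\lambda^n\|z\|>\lambda^n\varepsilon^{-1},
\end{align*}
that is, $\varepsilon_n\|f^n(z)\|>1$ with $\varepsilon_n:=\varepsilon\lambda^{-n}\to0$. Moreover $f^n(z)\in U^+_{\delta,\varepsilon}$ already gives $\|f(f^n(z))\|\geq\delta\|f^n(z)\|^{d_+}$, which is exactly the second defining condition of $U^+_{\delta,\varepsilon_n}$, since that condition does not involve $\varepsilon$. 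Therefore $f^n(z)\in U^+_{\delta,\varepsilon_n}$.

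The only real point to check is that all these inequalities make sense for non-classical points of $\bA^{N,\an}_K$. This is handled by reading $\|z\|=\max_i|x_i(z)|$ and $\|f(z)\|=\max_i|f_i(z)|$ as seminorm evaluations; the inequalities above are then purely real-number manipulations at each point. The invariance itself is already contained in Kawaguchi's proposition, which we take as given on the whole Berkovich space, in line with Lee's extension.
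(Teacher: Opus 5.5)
Your proof is correct and follows essentially the paper's route. Both arguments iterate the bound $\|f(w)\|\geq\delta\|w\|^{d_+}$ along an orbit that stays in $U^+_{\delta,\varepsilon}$ by Kawaguchi's invariance. The paper refines the lower bound at each step ($\varepsilon_{n+1}=\varepsilon_n^{d_+}/\delta$, giving doubly exponential decay), while you keep the fixed ratio $\lambda^{-1}=\varepsilon^{d_+-1}/\delta=\theta_0$, which already gives $\varepsilon_n\to0$. Your version also avoids the paper's recursion for $\theta_n$, which is misprinted as $\theta_{n-1}^n$ instead of $\theta_{n-1}^{d_+}$. Your assumption $d_+\geq2$ is in fact forced by the standing condition $\varepsilon^{d_+-1}<\delta<1$.
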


\begin{proof}
    We already know $f^n(U_{\delta,\varepsilon}^+)\subset U_{\delta,\varepsilon}^+$ and $U_{\delta,\varepsilon'}\subset U_{\delta,\varepsilon}$ for any $\varepsilon'<\varepsilon$, it is enough to calculate how small $\varepsilon'$ can be for each $n$.

    Take any $z\in U_{\delta,\varepsilon}^+$, in particular, $\|z\|>\varepsilon^{-1}$. Then,
    \begin{align*}
        \left\|f(z)\right\|>\delta\|z\|^{d_+}>\frac{\delta}{\varepsilon^{d_+}}=\left(\frac{\delta}{\varepsilon^{d_+-1}}\right)\frac{1}{\varepsilon}.
    \end{align*}
    If we set $\theta_0:=\varepsilon^{d_+-1}/\delta$ and $\varepsilon_1:=\theta_0\varepsilon$, then $\theta_0<1$ and $f(U_{\delta,\varepsilon}^+)\subset U_{\delta,\varepsilon_1}^+$. In the same way, if we set
    \begin{align*}
        &\theta_n:=\frac{\varepsilon_n^{d_+-1}}{\delta};\\
        &\varepsilon_{n+1}=\theta_n\varepsilon_n,
    \end{align*}
    we obtain $f^n(U_{\delta,\varepsilon}^+)\subset U_{\delta,\varepsilon_n}^+$. It is enough to calculate $\theta_n$:
    \begin{align*}
        \theta_n=\frac{\varepsilon_n^{d_+-1}}{\delta}=\frac{(\varepsilon_{n-1}^{d_+}/\delta)^{d_+-1}}{\delta}=\frac{(\theta_{n-1}\varepsilon_{n-1})^{d_+-1}}{\delta}=\theta_{n-1}^n
    \end{align*}
    Since $\theta_0<1$, $\varepsilon_n\to0$.
\end{proof}

\begin{proof}[Proof of Proposition \ref{prop: regularpolyauto - nonarchdynamics - LimitOfUnboundedPoints}]
    Fix any compact set $K\subset \bA^{N,\an}\setminus K_f^+$. Since $\bigcup_n f^{-n}(U_{\delta,\varepsilon}^+)=\bA^{N,\an}\setminus K_f^+$ by Proposition \ref{prop: regularpolyautom - NAdynamics - Kawaguchi's results}, there exists a natural number $N$ such that $f^N(K)\subset U_{\delta,\varepsilon}^+$. Hence, it is enough to show that there exists a natural number $n$ such that $U_{\delta,\varepsilon_n}^+\subset W_A$. Since $\tilde{U}_{\delta,\varepsilon}^+\cap \tilde{N}_{\delta,\varepsilon}^+=\emptyset$, $F$ is well-defined on $\tilde{U}_{\delta,\varepsilon}^*$.

    If we write
    \begin{align*}
        f(x)=p(x)+(l.o.t.), \text{ with }p(x)=(p_1(x),\ldots,p_N(x)),
    \end{align*}
    where $p_i(x)$ is the sum of the monomials of degree $d_+$, $F$ on $(S=0)\setminus I(F)$ is written as
    \begin{align*}
        F(X:0)=[P(X):0], \text{ where }P(X)=S^{d_+}p(X/S).
    \end{align*}
    Since $F((S=0)\setminus I(F))=I(F^{-1})$,
    \begin{align*}
        [P(X):0]\in I(F^{-1})\subset \tilde{W}_A \text{ for any }A>0.
    \end{align*}
    By the continuity of $F$, as $\|z\|\to\infty$, we have the convergence $F(z:1)\to [p(z):0]$. That is, for sufficiently large $R>0$, we obtain $F(z:1)\in \tilde{W}_A$.

    Therefore, if we take $n$ so large that $\varepsilon_n^{-1}>R$, we obtain $f(U_{\delta,\varepsilon_n}^+)\subset W_A$.
\end{proof}

\begin{lmm} \label{lmm: regularpolyauto - nonarchdynamics - GreenFctionIndepOfNorms}
    For any $A>0$ and $m=1,\ldots,N$,
    \begin{align*}
        G_{\NA}^+(z)=\lim_{n\to\infty}\frac{1}{d_+^n}\log^+\max\left(|f_1^n(z)|,\ldots,|f_m^n(z)|,A|f_{m+1}^n(z)|,\ldots,A|f_N^n(z)|\right)
    \end{align*}
\end{lmm}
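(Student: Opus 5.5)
The plan is to compare the modified norm with the standard sup-norm $\|\cdot\|=\max(|z_1|,\ldots,|z_N|)$ used to define $G_{\NA}^+$ in Proposition \ref{prop: regularpolyautom - NA dynamics - local unif conv of G}. Set
\begin{align*}
    \|w\|_A:=\max\left(|w_1|,\ldots,|w_m|,A|w_{m+1}|,\ldots,A|w_N|\right).
\end{align*}
Elementary comparison gives
\begin{align*}
    \min(1,A)\|w\|\leq \|w\|_A\leq \max(1,A)\|w\|
\end{align*}
at every point $w\in\bA^{N,\an}_K$, since all quantities are evaluations of the seminorm at the coordinate functions. Applying this with $w=f^n(z)$, i.e.\ with $|f_i^n(z)|$ in place of $|w_i|$, yields a two-sided bound of $\|f^n(z)\|_A$ by $\|f^n(z)\|$ up to the fixed constant $c_A:=\max(A,A^{-1})\geq1$.

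The next step is to pass through $\log^+$. Because $\log^+$ is monotone and satisfies $\log^+(cx)\leq \log^+ x+\log c$ for $c\geq1$ together with $\log^+(x/c)\geq\log^+x-\log c$, we obtain
\begin{align*}
    \left|\log^+\|f^n(z)\|_A-\log^+\|f^n(z)\|\right|\leq \log c_A
\end{align*}
uniformly in $z$ and $n$. Dividing by $d_+^n$, the difference is at most $\log c_A/d_+^n\to0$. Since $\frac{1}{d_+^n}\log^+\|f^n(z)\|\to G_{\NA}^+(z)$ by Proposition \ref{prop: regularpolyautom - NA dynamics - local unif conv of G}, the modified sequence converges to the same limit. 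Moreover the convergence is even uniform relative to the original sequence, so local uniform convergence is also inherited.

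There is no real obstacle here. The only point to check is that the comparison is done pointwise on Berkovich points, not merely on classical points, and this holds because each $|f_i^n(z)|$ is simply a nonnegative real number. The case $m=N$ is trivial, as then the two norms coincide.
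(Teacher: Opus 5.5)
Your proposal is correct and follows essentially the same argument as the paper. Both prove the pointwise bound $\bigl|\log^+\|w\|_A-\log^+\|w\|\bigr|\leq |\log A|$ (your $\log c_A$), apply it at $w=f^n(z)$, and divide by $d_+^n$ to conclude.
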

\begin{proof}
    For any $z\in\bA^{N,\an}_K$,
    \begin{align*}
        \left|\log^+|z|-\log^+\max\left(|z_1|,\ldots,|z_m|,A|z_{m+1}|,\ldots,|z_N|\right)\right|\leq \left|\log A\right|.
    \end{align*}
    Hence,
    \begin{align*}
        \left|\frac{1}{d_+^n}\log^+\|f^n(z)\|-\frac{1}{d_+^n}\log^+\max\left(|f_1^n(z)|,\ldots,|f_m^n(z)|,A|f_{m+1}^n(z)|,\ldots,A|f_N^n(z)|\right)\right|\leq \frac{|\log A|}{d_+^n},
    \end{align*}
    which shows the claim.
\end{proof}

\begin{proof}[Proof of Proposition \ref{prop: regularpolyauto - nonarchdynamics - GreensoutsideofJuliaset}]
    We assume $\{z_1=\cdots=z_{l_-}=0\}\cap I(F^{-1})=\emptyset$, and take $A>0$ so that $\tilde{W}_A\cap I(F)=\emptyset$.

    First, by Proposition \ref{prop: regularpolyauto - nonarchdynamics - LimitOfUnboundedPoints} and Lemma \ref{lmm: regularpolyauto - nonarchdynamics - GreenFctionIndepOfNorms}, on any compact set $K\subset \bA^{N,\an}_K\setminus K_f^+$, we have the following equation:
    \begin{align*}
        G_{\NA}^+(z)=\lim_{n\to\infty}\frac{1}{d_+^n}\log\max\left\{\left|f_1^n(z)\right|,\ldots,\left|f_{l_-}^n(z)\right|\right\},
    \end{align*}
    where the limit is uniform. We note that for each $n$,
    \begin{align*}
        \log\max\left(1,\left|f_1^n(z)\right|,\ldots,\left|f_{l_-}^n(z)\right|\right)=\log \max \left(\frac{1}{\left|f_1^n(z)\right|}, 1, \ldots, \left|\frac{f_{l_-}^n(z)}{f_1^n(z)}\right|\right)+\log\left|f_1^n(z)\right|,
    \end{align*}
    where $f_1$ is non-zero on whole $K$, i.e. invertible. Then, each of $1/f_1^n, f_2^n/f_1^n,\ldots,f_{l_-}^n/f_1^n$ is also holomorphic on $K$, and $\log|f_1^n|$ is pluriharmonic on the interior of $K$. Therefore, Proposition \ref{prop: pluripotential theory - lth wedge of ddclogmax(1,|g1|,...,|gl-1|) is 0} assures
    \begin{align*}
        \left(\dd^c \log\max\left(1,\left|f_1^n(z)\right|,\ldots,\left|f_{l_-}^n(z)\right|\right)\right)^{l_-}=0
    \end{align*}
    for each $n$ on the interior of $K$. Since the limit is uniform, we obtain the claim.

\end{proof}
We note that by switching $f$ by $f^{-1}$, we obtain the same statement for $G_{NA}^-$, that is, on $\bA^{N,\an}_K\setminus K_f^-$,
\begin{align*}
    (\dd^cG_{\NA}^-)^{l_+}=0.
\end{align*}

\subsection{Proof of Theorem \ref{thm: regularpolyauto - nonarchdynamics - invariancemain}}
First we show the invariance under the action of $f$. By \ref{prop: regularpolyautom - fact - l+ + l_- =N, d^l=d^l},
\begin{align*}
    d_+^{l_-}=d_-^{l_+}.
\end{align*}
By the definition of $G_{\NA}^{\pm}$ we see that
\begin{align*}
    (f^{\pm 1})^*(T_{\pm,\NA})=d_{\pm}\cdot T_{\pm,\NA}.
\end{align*}
Combining these identities, we obtain $f^*\mu_{\NA}=\mu_{\NA}$. 

Next we show the equality $\mu_{\NA}=\left(\dd^c\max\{G_{\NA}^{+},G_{\NA}^{-}\}\right)^N$. We put $G_{\NA}=\max(G_{\NA}^+,G_{\NA}^-)$. The equality we need to show is
\begin{align*}
    \left(\dd^c G_{\NA}\right)^N=\left(\dd^c G_{\NA}^+\right)^{l_-}\wedge \left(\dd^c G_{\NA}^-\right)^{l_+}.
\end{align*}
Note that by definition
\begin{align*}
    \left(\dd^c G_{\NA}\right)^N=\dd^c\left(G_{\NA} \cdot \left(\dd^c G_{\NA}\right)^{N-1}\right).
\end{align*}
We first show that
\begin{align*}
    G_{\NA} \cdot \left(\dd^c G_{\NA}\right)^{N-1}=G_{\NA}^+ \cdot \left(\dd^c G_{\NA}\right)^{N-1}.
\end{align*}
This equality is trivial on the set of $z\in\bA^{N,\an}_K$ such that $G_{\NA}(z)=G^+_{\NA}(z)$. When $G_{\NA}(z)>G_{\NA}^+(z)$, by definition we obtain $G^-_{\NA}(z)>G_{\NA}^+(z)\geq 0$. By Proposition \ref{prop: regularpolyauto - nonarchdynamics - GreensoutsideofJuliaset},
\begin{align*}
    \left(\dd^c G_{\NA}\right)^{N-1}=\left(\dd^c G_{\NA}\right)^{l_+}=0 \text{ on }G_{\NA}^-(z)>0,
\end{align*}
from which we see that the both sides equal to $0$. Hence, on the whole $\bA^{N,\an}_K$ we obtain the above equality, and taking $\dd^c$ to obtain
\begin{align*}
    \left(\dd^c G_{\NA}\right)^N=\dd^c G_{\NA}^+ \wedge \left(\dd^c G_{\NA}\right)^{N-1}.
\end{align*}
Since the above argument can be continued as long as $N-i\geq l_+$, we obtain
\begin{align*}
    \left(\dd^c G_{\NA}\right)^N= \left(\dd^c G_{\NA}^+\right)^{l_-} \wedge \left(\dd^c G_{\NA}\right)^{l_+}.
\end{align*}
In the same way as above, we write
\begin{align*}
    \left(\dd^c G_{\NA}^+\right)^{l_-} \wedge \left(\dd^c G_{\NA}\right)^{l_+}=  \dd^c\left( G_{\NA} \left(\dd^c G_{\NA}^+\right)^{l_-} \wedge \left(\dd^c G_{\NA}\right)^{l_+-1}\right),
\end{align*}
and enough to show
\begin{align*}
    G_{\NA}\left(\dd^c G_{\NA}^+\right)^{l_-} \wedge \left(\dd^c G_{\NA}\right)^{l_+-1}=G^-_{\NA}\left(\dd^c G_{\NA}^+\right)^{l_-} \wedge \left(\dd^c G_{\NA}\right)^{l_+-1}.
\end{align*}
Again, the equation is trivial when $G_{\NA}=G_{\NA}^-.$ On the locus $G_{\NA}=G_{\NA}^+>G_{\NA}^-\geq0$, Proposition \ref{prop: regularpolyauto - nonarchdynamics - GreensoutsideofJuliaset} assures
\begin{align*}
    \left(\dd^c G_{\NA}\right)^{l_-}=0 \text{ on }G_{\NA}^+(z)>0,
\end{align*}
whence we obtain the equality. As before, repeating this procedure to show
\begin{align*}
    \left(\dd^c G_{\NA}\right)^N &= \left(\dd^c G_{\NA}^+\right)^{l_-} \wedge \left(\dd^c G_{\NA}\right)^{l_+} \\ 
    &= \left(\dd^c G_{\NA}^+\right)^{l_-} \wedge \left(\dd^c G_{\NA}^-\right)^{l_+},
\end{align*}
which completes the proof.

\section{Uniformity of the complex Green function}\label{sec: regularpolyautom - uniformity of cpx dynamics}
Let $f:\bC^N\times\bD^*\to \bC^N\times\bD^*$ be an analytic family of regular polynomial automorphisms  of degree $d_+$ (and the degree of $f^{-1}$ will be denoted by $d_-$) possibly meromorphically degenerating at the origin. We take the coordinate $(x_1,\ldots,x_N)$ of $\bC^N$ and $(x_1:\cdots:x_N:s)$ of $\bP^N(\bC)$ with the line of infinity defined by $s=0$, and put $F_t^{n}=(F_{1,t}^n:\cdots:F_{N,t}^n:s^{d_+^n}):\bP^N(\bC)\times\bD^*\dashrightarrow\bP^N(\bC)\times\bD^*$ to be the extension of $f^n_t$.

We introduce several notation that is useful in this section. We set $d:=d_+^{l_-}=d_-^{l_+}$ and write
\begin{align*}
    F_{i,t}^{l_-}(x_1,\ldots,x_N,0)=\sum_{|J|=d}a_{i,J}(t)x^J; \\
    F_{i,t}^{-l_+}(x_1,\ldots,x_N,0)=\sum_{|J|=d}b_{i,J}(t)x^J,       
\end{align*}
where $a_{i,J}$ and $b_{i,J}$ are meromorphic function around $0$. We define
\begin{align*}
    M_t=\max_{i,J}\{|a_{i,J}(t)|, |b_{i,J}(t)|,1\}.
    \end{align*}
Since all the coefficients are meromorphic, there exists a constant $A>0$ independent of $t$ such that
\begin{align*}
    (0\leq)\log M_t<A\log|t|^{-1}
\end{align*}
for $|t|<<1$. For the later argument, we take $A$ a bit larger to assume
\begin{align} \label{eq: regularpolyauto - uniformity - eval of M_t}
    \log M_t +C_0 <A\log|t|^{-1},
\end{align}
where $C_0$ is a positive constant independent of $t$, which is determined later.

The following lemma is important to evaluate the uniformity on the line at infinity.
\begin{lmm}\label{prop: regularpolyauto - unifomity - eval at infinity line}
     In the above notation, there exists a constant $C>0$ independent of $t$ such that for any $x=(x_1:\cdots:x_N:0)\in I(f_t)$, we have
    \begin{align*}
        \left|\frac{1}{d}\log|F_t^{-l_+}(x)\|-\log\|x\|\right|\leq C\log|t|^{-1},
    \end{align*}
    where $\|x\|=\max\{|x_1|,\ldots,|x_N|\}$. Similarly, for any $x=(x_1:\cdots:x_N:0)\in I(f_t^{-1})$, we have 
    \begin{align*}
        \left|\frac{1}{d}\log\|F_t^{l_-}(x)\|-\log\|x\|\right|\leq C\log|t|^{-1},
    \end{align*}

    \end{lmm}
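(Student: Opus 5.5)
The plan is to prove a two-sided estimate: an upper bound from the size of the coefficients, and a lower bound from an effective Nullstellensatz identity whose coefficients have poles of bounded order at $t=0$. I treat the first inequality; the second follows by exchanging the roles of $f_t$ and $f_t^{-1}$, i.e.\ of $l_-$ and $l_+$.

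\emph{Upper bound.} The components $F_{i,t}^{-l_+}(x_1,\ldots,x_N,0)=\sum_{|J|=d}b_{i,J}(t)x^J$ are homogeneous of degree $d$. Let $c_{N,d}$ be the number of monomials of degree $d$. Then
\begin{align*}
\|F_t^{-l_+}(x)\|\leq c_{N,d}\,M_t\,\|x\|^d .
\end{align*}
Taking logarithms and using \eqref{eq: regularpolyauto - uniformity - eval of M_t}, with $C_0\geq\log c_{N,d}$, gives
\begin{align*}
\tfrac1d\log\|F_t^{-l_+}(x)\|-\log\|x\|\leq A\log|t|^{-1}.
\end{align*}

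\emph{Lower bound, algebraic input.} I use that for a regular automorphism $I(F^{n})=I(F)$ and $I(F^{-n})=I(F^{-1})$ for all $n\geq1$. Regularity gives $I(F)\cap I(F^{-1})=\emptyset$. Hence the homogeneous forms $F^{l_-}_{j}(X,0)$ and $F^{-l_+}_{j}(X,0)$, $j=1,\ldots,N$, have no common zero on $\{s=0\}$ apart from $X=0$.

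I apply this over the field $\mathcal K$ of germs of meromorphic functions at $0$, which has characteristic zero; the relations of Proposition \ref{prop: regularpolyautom - fact - l+ + l_- =N, d^l=d^l} also hold over $\mathcal K$. By the homogeneous Nullstellensatz, in the effective linear-algebra form used by Kawaguchi in the previous section, there are $m$ and homogeneous $P_{ij},Q_{ij}$ of degree $m-d$ with coefficients in $\mathcal K$ such that
\begin{align*}
\sum_j P_{ij}(X)F^{l_-}_{j,t}(X,0)+\sum_j Q_{ij}(X)F^{-l_+}_{j,t}(X,0)=X_i^m, \qquad i=1,\ldots,N.
\end{align*}
Concretely, the $P_{ij},Q_{ij}$ are obtained by solving a Macaulay-type linear system whose entries are the meromorphic germs $a_{i,J},b_{i,J}$. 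By Cramer's rule the solution coefficients are quotients of meromorphic germs by a nonzero determinant germ $\Delta(t)$.

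\emph{Main obstacle.} The step I expect to be the main obstacle is making this identity hold for every $t$ with $0<|t|\ll1$ while controlling its coefficients. For the first point, the identity holds on a punctured disk by analytic continuation once all germs converge there. For the second, $\Delta\not\equiv0$ is meromorphic, so $|\Delta(t)|\geq c|t|^{k}$ for small $|t|$, and hence all coefficients of the $P_{ij},Q_{ij}$ are bounded by $|t|^{-B}$ for some fixed $B$. Shrinking the disk handles the finitely many possible zeros of $\Delta$ near $0$.

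\emph{Conclusion.} Let $x\in I(f_t)$ lie on the line at infinity. Then $F^{l_-}_{j,t}(x,0)=0$ for all $j$, because $I(F_t^{l_-})=I(F_t)$ for each fixed $t$. Choose $i$ with $|x_i|=\|x\|$. The identity then yields
\begin{align*}
\|x\|^m\leq c'|t|^{-B}\|x\|^{m-d}\,\|F_t^{-l_+}(x)\|.
\end{align*}
Taking logarithms gives
\begin{align*}
\tfrac1d\log\|F_t^{-l_+}(x)\|-\log\|x\|\geq-\tfrac{B}{d}\log|t|^{-1}-\tfrac{\log c'}{d}.
\end{align*}
For $|t|$ small the constant term is absorbed into the logarithmic term. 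Combining with the upper bound, the lemma holds with $C=\max(A,B/d+1)$.
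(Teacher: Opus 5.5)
Your proposal is correct and follows the paper's proof. The upper bound comes from the coefficient size $M_t$ of the degree-$d$ forms, and the lower bound from a Nullstellensatz identity $\sum_j P_{ij}F^{l_-}_{j}+\sum_j Q_{ij}F^{-l_+}_{j}=X_i^m$ evaluated at $x\in I(f_t)=I(f_t^{l_-})$, where the $F^{l_-}$ terms vanish. Producing this identity once over the field of meromorphic germs and then specializing to each $t$ justifies the uniform $|t|^{-B}$ bound on its coefficients more cleanly than the paper does; the paper only asserts that these coefficients are linear expressions in the meromorphic coefficients of $F_t^{\pm}$.
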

Note that $\log\max\left\{\|F_t^{-l_+}(x)\|,\|F_t^{l_-}(x)\|\right\}$ and $\log\|x\|$ are not well-defined though the difference is independent of the choice of the homogeneous coordinate of $x$.

The proof is based on \cite{HindrySilverman2000}, but to show the uniformity on $t$, we need more concrete argument about the constants appearing in their proof.

\begin{proof}

    First we show that for any $(z:0)\in(s=0)\subset\bP^N(\bC)$,
    \begin{align*}
        \frac{1}{d}\log\|F_t^{-l_+}(z,0)\|-\log\|z,0\|\leq \frac{A}{d}\log|t|^{-1}.
    \end{align*}
    This is straightforward evaluation of coefficients:
    \begin{align*}
        \|F_{i,t}^{-l_+}(z,0)\|&=\left\|\sum_{|J|=d}b_{i,J}(t)z^J\right\| \\
        &\leq N_0\max_{J}|b_{i,J}(t)|\max_{i}|z_i|^d \\
        &\leq N_0M_t\|z\|^d,
    \end{align*}
    where $N_0$ is the number of terms of $F_{i,t}^{-l_+}$ as polynomial of $z$. By taking the maximum through $i$ and their logarithm we obtain
    \begin{align}\label{eq: regularpolyauto - uniformity - 1st ineq when s=0}
        \frac{1}{d}\log\|F_t^{-l_+}(z,0)\|-\log\|z\|\leq \log M_t +\log N_0 <A\log|t|^{-1},
    \end{align}
    where the inequality follows from equation (\ref{eq: regularpolyauto - uniformity - eval of M_t}) when $C_0=\log N_0$. The same argument for $f^{l_-}$ shows that
    \begin{align*}
        \frac{1}{d}\log\|F_t^{l_-}(z,0)\|-\log\|z\|<A\log|t|^{-1}.
    \end{align*}
    
    Before showing the opposite inequality, we prepare several notation. Take any $z=(z_1:\cdots:z_N)$. Since $\{F_{i,t}^{-l_+}(z,0),F_{i,t}^{l_-}(z,0)\}_i$ has no common zero, there exists polynomials $p_{ij,t},q_{ij,t}$ and integer $m>d$ such that
    \begin{align}\label{eq: regularpolyautom - uniformity - resultant}
        \sum_{i=1}^N p_{ij,t}(z,0)F_{i,t}^{-l_+}(z,0)+\sum_{i=1}^N q_{ij,t}(z,0)F_{i,t}^{l_-}(z,0)=z_j^m
    \end{align}
    for $j=1,\ldots,N$. Note that $p_{ij,t}, q_{ij,t}$ are homogeneous of degree $m-d$ and their coefficients are written as linear sum of those of $F_{i,t}^{-l_+}$ and $F_{i,t}^{l_-}$, we may regard the coefficients are complex Laurent series around $0$. There exists a positive constant $M_t'>0$ depending on $t$ such that
    \begin{align*}
        \left|\left(\text{coefficients of }p_{ij,t}, q_{ij,t}\right)\right|<M_t',
    \end{align*}
    and we may evaluate $M_t'$ as
    \begin{align}\label{eq: regularpolyauto - uniformity - eval at s=0}
        \log M_t'+ C\leq A' \log |t|^{-1}
    \end{align}
    for any small $t$, where $C$ is some constant defined later independently of $t$. Again we take $A'$ a bit large so that
    \begin{align*}
        \log M_t +C_1<A'\log|t|^{-1}.
    \end{align*}

    Now we take $x\in I(f_t)$. Note that by definition the coordinate of $x$ is of the form $x=(x_1:\cdots:x_N:0)$. Since $x\in I(f_t)=I(f_t^{l_-})$, we obtain
    \begin{align*}
        s=F_{1,t}^{l_-}(x)=\cdots=F_{N,t}^{l_-}(x)=0,
    \end{align*}
    and by the regularity $x\not\in I(f^{-1})$, that is,
    \begin{align*}
        (F_{1,t}^{-l_+}(x),\ldots,F_{N,t}^{-l_+}(x))\neq(0,\ldots,0).
    \end{align*}
    By equation (\ref{eq: regularpolyautom - uniformity - resultant}) when $(z:0)=x$,
    \begin{align*}
        \sum_{i=1}^N p_{ij,t}(x)F_{i,t}^{-l_+}(x)=x_j^m
    \end{align*}
    for $j=1,\ldots,N$. Then,
    \begin{align*}
        |x_j|^m&=\left|\sum_{i=1}^N p_{ij,t}(x)F_{i,t}^{-l_+}(x)\right| \\
        &\leq N_1 \max_j|p_{ij,t}(x)| \max\left|F_{i,t}^{-l_+}(x)\right| \\
        &\leq N_1N_2 M_t' \|x\|^{m-d}\left\|F_{t}^{-l_+}(x)\right\|,
    \end{align*}
    where $N_1,N_2$ are number of terms as $N_0$. By taking the maximum we obtain
    \begin{align*}
        \|x\|^m\leq N_1N_2 M_t' \|x\|^{m-d}\left\|F_{t}^{-l_+}(x)\right\|.
    \end{align*}
    By taking $C=\log N_1N_2$ in (\ref{eq: regularpolyauto - uniformity - eval at s=0}), we obtain
    \begin{align}\label{eq: regularpolyauto - uniformity - 2nd ineq when s=0}
        \log\|x\|-\frac{1}{d}\log\left\|F_{t}^{-l_+}(x)\right\|\leq\frac{1}{d}\log(N_1N_2M_t')\leq \frac{A'}{d}\log|t|^{-1}.
    \end{align}
    Now we define $C=\max\{A/d,A'/d\}$ in (\ref{eq: regularpolyauto - uniformity - 1st ineq when s=0}) and (\ref{eq: regularpolyauto - uniformity - 2nd ineq when s=0}), we obtain the result.

    When $x\in I(f_t^{-1})$, the same proof works if we switch the roles of $p_{ij,t}$ and $q_{ij,t}$.
\end{proof}

We define the function $g:\bP^N(\bC)\times\bD^*\to\bR$ as follows:
\begin{align*}
    g(x,t)=\lim_{n\to\infty}\frac{1}{d^n}\log\max\left\{\left\|F_t^{nl_-}(x)\right\|_*, \left\|F_t^{-nl_+}(x)\right\|_*\right\}.
\end{align*}
The existence of the function $g(x,t)$ is not trivial, but as soon as we show it, clearly it is a limit of a sequence of model functions
\begin{align*}
    g_n(x,t)=\log\max\left\{\left\|F_t^{nl_-}(x)\right\|_*, \left\|F_t^{-nl_+}(x)\right\|_*\right\}.
\end{align*}
In this section, we show not only the pointwise convergence of $g_n$, but also the convergence uniformly in $t$;
\begin{prop} \label{prop: regularpolyautom - uniformity main}
    In the above notation, the limit in the definition of $g$ exists for all $(x,t)\in\bP^N(\bC)\times\bD^*$. Moreover, there exists a positive constant $C>0$ such that
    \begin{align*}
        \sup_{x\in\bP^N(\bC)}\left|g(x,t)-\frac{1}{d^n}g_n(x,t)\right|\leq \frac{C}{d^n}\log|t|^{-1}
    \end{align*}
    for all $t$ sufficiently close to $0$.
\end{prop}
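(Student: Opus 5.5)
We prove the statement by the standard telescoping argument for Green functions. The whole point is to obtain a one-step estimate whose constant grows at most like $\log|t|^{-1}$. Set
\begin{align*}
    H_t(x):=\max\left\{\left\|F_t^{l_-}(x)\right\|,\left\|F_t^{-l_+}(x)\right\|\right\},
\end{align*}
viewed on homogeneous lifts $x\in\bC^{N+1}\setminus\{0\}$. We will show that there is $B>0$ independent of $t$ such that, for all $x$ and all small $t$,
\begin{align}\label{eq: proposal - one step}
    \left|\frac{1}{d}\log H_t(x)-\log\|x\|\right|\leq B\log|t|^{-1},
\end{align}
where $\|x\|$ is the max norm on $\bC^{N+1}$. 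Replacing $\|\cdot\|$ by the Fubini--Study norm $\|\cdot\|_*$ only changes this by a bounded constant, which is absorbed into $B\log|t|^{-1}$ for small $t$.

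Granting \eqref{eq: proposal - one step}, the iteration goes as follows. The maps $F_t^{nl_-}$ and $F_t^{-nl_+}$ are compositions of $F_t^{\pm}$-type lifts. The key identity to arrange is that $\max\{\|F_t^{(n+1)l_-}\|,\|F_t^{-(n+1)l_+}\|\}$ is controlled by $H_t$ applied to the pair of points $F_t^{nl_-}(x)$ and $F_t^{-nl_+}(x)$. Each of $F^{l_-}$ and $F^{-l_+}$ satisfies the upper bound with constant $A\log|t|^{-1}$, by the coefficient estimate \eqref{eq: regularpolyauto - uniformity - eval of M_t}. The lower bound is the delicate part. We will apply the one-step bound to whichever of the two iterates dominates, using that $g_{n}$ is the logarithm of a maximum.

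This yields $|g_{n+1}-d\,g_n|\leq dB\log|t|^{-1}$. Dividing by $d^{n+1}$ gives a telescoping series, so $g_n/d^n$ is uniformly Cauchy, the limit $g$ exists, and
\begin{align*}
    \left|g-\frac{g_n}{d^n}\right|\leq\sum_{k\geq n}\frac{B\log|t|^{-1}}{d^{k}}\leq \frac{C}{d^n}\log|t|^{-1}.
\end{align*}
If the composition structure does not literally give the recursion, we instead use the same scheme with a fixed iterate, as in Sibony's original proof.

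For \eqref{eq: proposal - one step}, the upper bound is the coefficient estimate already carried out at the start of the proof of Lemma \ref{prop: regularpolyauto - unifomity - eval at infinity line}, now including the $s$-variable: $N_0M_t\|x\|^d$ bounds each component. For the lower bound we use the resultant identity \eqref{eq: regularpolyautom - uniformity - resultant}, extended to the full homogeneous polynomials as in Kawaguchi's construction. Since $\{F^{l_-}_{i,t},F^{-l_+}_{i,t},s^d\}$ have no common zero in $\bP^N$, we obtain identities
\begin{align*}
    \sum_i p_{ij,t}F^{-l_+}_{i,t}+\sum_i q_{ij,t}F^{l_-}_{i,t}+r_{j,t}\,s^{d}=x_j^m.
\end{align*}
The $s$-component of both lifts is $s^d$, so $H_t(x)\geq\max(\|F\text{-part}\|,|s|^d)$. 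This gives $\|x\|^m\leq N'M_t'\|x\|^{m-d}H_t(x)$, and also trivially $|s|^d\leq H_t(x)$.

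The main obstacle is controlling $M_t'$, the size of the coefficients of $p_{ij,t},q_{ij,t},r_{j,t}$, uniformly as $t\to0$. We need $\log M_t'\leq A'\log|t|^{-1}$. To get this, we solve the resultant identities over the field of meromorphic germs, i.e.\ over $\bC((t))$ convergent: the resultant, or a Nullstellensatz certificate, of the homogeneous system is a nonzero meromorphic function of $t$, because the system has no common zero for every $t\neq0$ and hence generically. The cofactors are polynomial in the coefficients, hence meromorphic at $0$. Dividing by the resultant, which has a pole or zero of finite order at $t=0$, keeps everything meromorphic. So all coefficients are bounded by $|t|^{-A'}$ near $0$, as was already implicitly used in \eqref{eq: regularpolyauto - uniformity - eval at s=0}. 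This is exactly where the meromorphic (rather than essential) degeneration at the origin is used.
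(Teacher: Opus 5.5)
Your one-step bound is correct. Deriving it from a single homogeneous Nullstellensatz for $\{F^{l_-}_{i,t},F^{-l_+}_{i,t},s^d\}$, with cofactors meromorphic in $t$, is tidier than the paper, which proves it separately at infinity (the Lemma) and on $\bC^N$ (via $V_{\Phi_t}$).

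\textbf{The gap.} The passage from this bound to $|g_{n+1}-d\,g_n|\le dB\log|t|^{-1}$ is not justified: its lower half does not follow. Put $Y=F_t^{nl_-}(x)$ and $Z=F_t^{-nl_+}(x)$. Then $g_{n+1}(x)=\log\max\{\|F_t^{l_-}(Y)\|,\|F_t^{-l_+}(Z)\|\}$, so the two maps are evaluated at different points. If $\|Y\|\ge\|Z\|$, your bound at $Y$ only says that $\max\{\|F_t^{l_-}(Y)\|,\|F_t^{-l_+}(Y)\|\}$ is at least $c_t\|Y\|^d$. That maximum may be carried by $F_t^{-l_+}(Y)$, which is exactly what happens when $Y$ is close to $I(F_t)$, where $F_t^{l_-}$ degenerates. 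In that case you learn nothing about $g_{n+1}$.

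Excluding this needs dynamical input: a dominating forward iterate cannot lie near $I(F_t)$, which is the content of Sibony's filtration. No inequality valid pointwise at every $x$ can supply this, and the fallback to ``a fixed iterate as in Sibony'' does not address it.

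The paper's proof is organized around exactly this point. On $\{s=0\}$ it uses that $f_t$ maps $\{s=0\}\setminus I(f_t)$ into $I(f_t^{-1})$, where the Lemma controls $F_t^{l_-}$ alone (and symmetrically for $f_t^{-1}$). So the forward and backward sequences are each Cauchy separately. On $\bC^N$ it introduces $\Phi_t(x,y)=(f_t^{l_-}(x),f_t^{-l_+}(y))$ precisely because $\Phi_t^n(x,x)=(Y,Z)$. Note, though, that the paper too checks the lower bound only at $(x,x)$, so the orbit still has to be kept in $V_{\Phi_t}$.

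\textbf{A fix using your estimate.} Your bound does suffice once you iterate with the exact Green functions instead of the approximants. Since $G_t^\pm\ge0$, $G_t^+\circ f_t^{\pm1}=d_+^{\pm1}G_t^+$ and $G_t^-\circ f_t^{\pm1}=d_-^{\mp1}G_t^-$, the four values $G_t^\pm(f_t^{nl_-}x)$ and $G_t^\pm(f_t^{-nl_+}x)$ are $d^nG_t^+(x)$, $d_-^{-nl_-}G_t^-(x)$, $d_+^{-nl_+}G_t^+(x)$ and $d^nG_t^-(x)$. Hence, for $G_t=\max\{G_t^+,G_t^-\}$,
\begin{align*}
\max\bigl\{G_t\circ f_t^{nl_-},\,G_t\circ f_t^{-nl_+}\bigr\}=d^n G_t \quad\text{on }\bC^N .
\end{align*}
Consequently, a global comparison $|G_t-\log^+\|\cdot\||\le B'\log|t|^{-1}$ on $\bC^N$ immediately gives
$$|G_t-d^{-n}\max\{\log^+\|f_t^{nl_-}\|,\log^+\|f_t^{-nl_+}\|\}|\le d^{-n}B'\log|t|^{-1}.$$
Each $d^{-n}g_n(\cdot,t)$ is continuous on $\bP^N$, because $I(F_t)\cap I(F_t^{-1})=\emptyset$. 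So the uniform Cauchy estimate passes from the dense set $\bC^N$ to the hyperplane at infinity.

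It remains to prove the comparison.
\begin{itemize}
\item The upper half is your coefficient bound.
\item For the lower half, put $\beta_t:=\inf_{\bC^N}(G_t-\log^+\|\cdot\|)$, which is finite for each fixed $t$ by Sibony. The case $n=1$ of the identity together with your one-step bound gives $\beta_t\ge -B\log|t|^{-1}+\beta_t/d$, i.e.\ $\beta_t\ge -\frac{d}{d-1}B\log|t|^{-1}$.
\item Alternatively, quote Proposition~\ref{prop:regularpolyautom-Gf-growth} with $|c^\pm|\le A\log|t|^{-1}$.
\end{itemize}
With this replacing the recursion, your argument goes through.
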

Note that Lee \cite{Lee:2012mb} showed a similar result in Theorem 6.5. His result treats both of the archimedean and non-archimedean dynamics with broader polynomial morphisms, but the uniformity in his setting is slightly different from ours since we need to treat the dependence of $t$. The strategy is similar to his, but to clarify this difference we display the proof.
\begin{proof}
    We put for any $(x,t)\in\bP^N(\bC)\times\bD^*$ and integers $n>m>0$,
    \begin{align*}
        \Delta_{m,n}(x,t):=\left|\frac{1}{d^n}g_n(x,t)-\frac{1}{d^m}g_m(x,t)\right|.
    \end{align*}
    Note that when $x\in\bP^N(\bC)$ lies in the line of infinity, the proposition follows from Lemma \ref{prop: regularpolyauto - unifomity - eval at infinity line}. Indeed, for any two integers $n>m$ and for any $(x,t)=([x_1:\ldots:x_N:0],t)$, we set
    \begin{align*}
        \Delta_{m,n}(x,t)&=\left|\frac{1}{d^n}g_n(x,t)-\frac{1}{d^m}g_m(x,t)\right| \\
        &= \frac{1}{d^m}\left|\frac{1}{d^{n-m}}\log\max\left\{\|F_t^{-nl_+}(x)\|,\|F_t^{nl_-}(x)\|\right\}-\log\max\left\{\|F_t^{-ml_+}(x)\|,\|F_t^{ml_-}(x)\|\right\}\right|.
    \end{align*}
    When $x\in I(f_t)\cup I(f_t^{-1})$, we may apply Lemma \ref{prop: regularpolyauto - unifomity - eval at infinity line} directly to obtain
    \begin{align*}
        \Delta_{m,n}(x,t)\leq\left(\sum_{i=m}^{n-1}\frac{C}{d^i}\right)\log|t|^{-1},
    \end{align*}
    which means the sequence $\{g_n(x,t)/d^n\}$ is Cauchy, and the limit satisfies
    \begin{align*}
        \sup_{x\in I(f_t)\cup I(f_t^{-1})}\left|g(x,t)-\frac{1}{d^n}g_n(x,t)\right|\leq \left(\sum_{i\geq n}\frac{C}{d^i}\right)\log|t|^{-1}=\frac{C}{d-1}\cdot\frac{1}{d^n}\log|t|^{-1}.
    \end{align*}
    When $x\in (s=0)\setminus\bigl(I(f_t)\cup I(f_t^{-1})\bigr)$, we note that $f_t(x)\in I(f_t^{-1})$ and $f_t^{-1}(x)\in I(f_t)$. Therefore for any $n>0$, Lemma \ref{prop: regularpolyauto - unifomity - eval at infinity line} is applicable to obtain
    \begin{align*}
        \left|\frac{1}{d^{n+1}}\log\left\|F_t^{-(n+1)l_+}(x)\right\|-\frac{1}{d^n}\log\left\|F_t^{-nl_+}(x)\right\|\right|\leq \frac{C}{d^n}\log|t|^{-1}; \\
        \left|\frac{1}{d^{n+1}}\log\left\|F_t^{(n+1)l_-}(x)\right\|-\frac{1}{d^n}\log\left\|F_t^{nl_-}(x)\right\|\right|\leq \frac{C}{d^n}\log|t|^{-1};
    \end{align*}
    Therefore by taking max, we obtain
    \begin{align*}
        \left|\frac{1}{d^{n+1}}\log\max\left\{\left\|F_t^{-(n+1)l_+}(x)\right\|,\left\|F_t^{(n+1)l_-}(x)\right\|\right\}-\frac{1}{d^{n}}\log\max\left\{\left\|F_t^{-nl_+}(x)\right\|,\left\|F_t^{nl_-}(x)\right\|\right\} \right|\leq \frac{C}{d^n}\log|t|^{-1}.
    \end{align*}
    This leads us to show that for any $n>m>0$,
    \begin{align*}
        \Delta_{m,n}(x,t)\leq \left(\sum_{i=m}^{n-1} \frac{C}{d^i}\right)\log|t|^{-1},
    \end{align*}
    and hence
    \begin{align}\label{eq: regularpolyautom - uniformity - eval at infty}
        \sup_{x\in (s=0)}\left|g(x,t)-\frac{1}{d^n}g_n(x,t)\right|\leq \left(\sum_{i\geq n}\frac{C}{d^i}\right)\log|t|^{-1}=\frac{C}{d-1}\cdot\frac{1}{d^n}\log|t|^{-1}.
    \end{align}
    Next we evaluate $\Delta_{m,n}(x,t)$ on $\bC^N$. To show this, we prepare several notations. First we define the following two functions on $\bC^{2N}$: for $(x,y)\in \bC^N\times\bC^N$,
    \begin{align*}
        \Phi_t(x,y):=(f_t^{l_-},f_t^{-l_+})(x,y)=(f_t^{l_-}(x),f_t^{-l_+}(y)); \\
        \Psi_t(x,y):=(f_t^{-l_+},f_t^{l_-})(x,y)=(f_t^{-l_+}(x),f_t^{l_-}(y)).
    \end{align*}
    
    We note that the sets of coefficients of $\Phi$ and $\Psi$ are exactly the union of those of $f_t^{-l_+}$ and $f_t^{l_-}$. Let $C'(t)$ be the maximum of the absolute values of all the coefficients of $f_t^{-l_+}$ and $f_t^{l_-}$, and $C(t):=\max(C'(t),1)$. Take
    \begin{align*}
        &\delta(t):=\frac{1}{C(t)^{d(d-1)}}(<1); \\
        &\varepsilon(t):=\frac{1}{C(t)^d}(<1).
    \end{align*}
    Then, since each coefficient of $f_t^{-l_+}$ or $f_t^{l_-}$ is meromorphic on $\bD$ with possible pole only at the origin, there exists a positive number $A',B'>0$ independent of $t$ such that
    \begin{align*}
        -\log\delta(t)&\leq A'\log|t|^{-1}; \\
        \log C(t)&\leq B'\log|t|^{-1}
    \end{align*}
    around $0$. For a map $\Phi_t$, we define $V_{\Phi_t}$ similarly as before:
    \begin{align*}
        V_{\Phi_t}=\left\{z\in \bC^{2N}\middle| \|z\|\leq\varepsilon(t)^{-1}\text{ or }\|\Phi_t(z)\|\geq \delta(t)\|z\|^d\right\}.
    \end{align*}
    Then, for any $(x,y)\in V_{\Phi_t}$, it follows that
    \begin{align}\label{eq: regularpolyauto - uniformity - unbounded region}
        \log^+\max\left(|x|,|y|\right)-\frac{1}{d}\log^+\max\left(\left|f_t^{l_-}(x)\right|,\left|f_t^{-l_+}(y)\right|\right)\leq -\log \delta(t)\leq A'\log|t|^{-1}.
    \end{align}
    In the same way, if we define $V_{\Psi_t}$ similarly, for any $(x,y)\in V_{\Psi_t}$, it follows that
    \begin{align*}
        \log^+\max\left(|x|,|y|\right)-\frac{1}{d}\log^+\max\left(\left|f_t^{-l_+}(x)\right|,\left|f_t^{l_-}(y)\right|\right)\leq -\log \delta(t)\leq A'\log|t|^{-1}.
    \end{align*}

    Now we fix $x=(x_1,\ldots,x_N)\in\bC^N$. By definition
    \begin{align*}
        &\left|f_t^{-l_+}(x)\right|\leq C'(t)\cdot \|x\|^d; \\
        &\left|f_t^{l_-}(x)\right|\leq C'(t)\cdot \|x\|^d.
    \end{align*}
    Taking the logarithm we obtain
    \begin{align*}
        \frac{1}{d}\log^+\max\left(\left|f_t^{-l_+}(x)\right|,\left|f_t^{l_-}(x)\right|\right) -\log^+|x|\leq \log C(t)\leq B'\log|t|^{-1}.
    \end{align*}

    We show that $x\in V_{\Phi_t}\cap V_{\Psi_t}$. We note that by definition,
    \begin{align*}
        V_{\Phi_t}\cap \Delta =V_{\Psi_t}\cap \Delta,
    \end{align*}
    where
    \begin{align*}
        \Delta=\{(x,x)\in\bC^N\times\bC^N\},
    \end{align*}
    since the defining inequalities coincide in this case.
    Set
    \begin{align*}
        V_t^{\pm}=\left\{x\in\bC^N\middle| \|x\|\leq \varepsilon(t)^{-1}\text{ or } \|f_t^{l_{\mp}}(x)\|\geq \delta(t)\|z|^d\right\}.
    \end{align*}
    
    Since $V^+_t\cup V^-_t=\bC^N$, we must have either $\max(|f^{-l_+}_t(x)|,1)\geq\delta(t)\max(|x|^d,1)$ or $\max(|f^{l_-}_t(x)|,1)\geq\delta(t)\max(|x|^d,1)$, that is, $(x,x)\in V_{\Phi_t}$.

    We apply (\ref{eq: regularpolyauto - uniformity - unbounded region}) to $(x,x)$,
    \begin{align*}
        \log^+|x|-\frac{1}{d}\log^+\max\left(\left|f_t^{-l_+}(x)\right|,\left|f_t^{l_-}(x)\right|\right)\leq A'\log|t|^{-1}.
    \end{align*}
    Therefore we obtain for any $x\in\bC^N$,
    \begin{align*}
        \left|\log^+|x|-\frac{1}{d}\log^+\max\left(\left|f_t^{-l_+}(x)\right|, \left|f_t^{l_-}(x)\right|\right)\right|\leq \max(A',B')\log|t|^{-1}.
    \end{align*}
    Now we evaluate $\Delta_{m,n}(x,t)$. The point $x\in\bC^N$ is written as $(x_1:\cdots:x_N:1)$ in the homogeneous coordinate of $\bP^N(\bC)$. Hence 
    \begin{align*}
        \log \|F_t^{-l_+}(x_1:\cdots:x_N:1)\|=\log^+|f_t^{-l_+}(x_1,\ldots,x_N)|,
    \end{align*}
    which shows
    \begin{align*}
        \Delta_{m,n}(x,t)\leq \left(\sum_{i=m}^{n-1}\frac{\max(A',B')}{d^{i}}\right)\log|t|^{-1},
    \end{align*}
    and hence
    \begin{align}\label{eq: regularpolyauto - uniformity - eval at affine}
        \left|g(x,t)-\frac{1}{d^n}g_n(x,t)\right|\leq \frac{\max(A',B')}{d-1}\frac{1}{d^n}\log|t|^{-1}
    \end{align}
    By (\ref{eq: regularpolyautom - uniformity - eval at infty}) and (\ref{eq: regularpolyauto - uniformity - eval at affine}), we obtain
    \begin{align*}
        \sup_{x\in \bP^N(\bC)}\left|g(x,t)-\frac{1}{d^n}g_n(x,t)\right|\leq\frac{C}{d^n}\log|t|^{-1}
    \end{align*}
    by replacing the constant.
\end{proof}

\section{Hybrid convergence of the invariant measures}\label{sec: regularpolyautom - Hybrid convergence of the invariant measures}
In this section, we state and show our main theorem. First we prepare the notation.

Let $f:\bC^N\times\bD^*\to \bC^N\times\bD^*$ be an analytic family of regular polynomial automorphisms possibly meromorphically degenerating at the origin and $f_{\NA}:=f_{\bC((t))}^{\an}:\bA^{N,\an}_{\bC((t))}\to\bA^{N,\an}_{\bC((t))}$ be the induced non-archimedean automorphism. Fix a small positive number $r>0$. As in Section \ref{sec: intro of berkoich spaces}, we can consider the hybrid analytification of the affine space $\bA^{N,\hyb}$. We have a proper surjection $\pi_{\hyb}:\bA^{N,\hyb}\to\cC_{\hyb}(r)$, a homeomorphism $\tau:\bar{\bD}_r\to\cC_{\hyb}(r)$, and a family of homeomorphisms $\{\psi_t\}_{t\in\bar{\bD}_r}$ such that
\begin{align*}
    &\psi_t: \bC^N\to \pi_{\hyb}^{-1}\left(\tau(t)\right) \quad \text{for}\quad t\neq0;\\
    &\psi_0: \bA^{N,\an}_{\bC((t))}\to \pi_{\hyb}^{-1}\left(\tau(0)\right).
\end{align*}
For each $t$, we have the invariant measure $\mu_t=T_{t,+}^{l_-}\wedge T_{t,-}^{l_+}$ by \cite{Sibony99}. For the induced non-archimedean automorphism $f_{\NA}$, we constructed an analogous invariant measure $\mu_{\NA}$ on $\bA^{N,\an}_{\bC((t))}$ in Theorem \ref{thm: regularpolyauto - nonarchdynamics - invariancemain}. We obtain a family of measures $\{(\psi_t)_*\mu_t\}_{t\in\bar{\bD}^*_r}\cup \{(\psi_0)_*\mu_{\NA}\}$ by pushing them forward to the hybrid affine space. Then, our main result is as follows:
\begin{thm}\label{thm: regularpolyautom - hybrid conv - main theorem}
    In the above notation, we have the following weak convergence of measures:
    \begin{align*}
        \lim_{t\to0}(\psi_t)_*\mu_t=(\psi_0)_*\mu_{\NA}.
    \end{align*}
\end{thm}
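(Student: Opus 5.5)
We plan to realize both $\mu_t$ and $\mu_{\NA}$ as Monge--Amp\`ere measures of one uniform function on the hybrid projective space, and then apply Proposition \ref{prop:themainfact}.

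\textbf{Step 1: the archimedean side.} The function $g(x,t)$ of Section \ref{sec: regularpolyautom - uniformity of cpx dynamics} is a potential on $\bP^N(\bC)\times\bD^*$. On $\bC^N\times\{t\}$ it equals, in the sense of metrics on $\cO(1)$,
\begin{align*}
    \lim_n \frac{1}{d^n}\log\max\left\{1,\|f_t^{nl_-}(x)\|,\|f_t^{-nl_+}(x)\|\right\}=\max\{l_-G_t^+,\,l_+G_t^-\},
\end{align*}
because $d^n=d_+^{nl_-}=d_-^{nl_+}$. Each
\begin{align*}
    g_n=\log\max\left\{\|F_t^{nl_-}\|_*,\|F_t^{-nl_+}\|_*\right\}
\end{align*}
is an archimedean model function $\varphi_{\cF_n}$ of degree $d^n$. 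Its sections are the components of $F^{nl_-}$ and $F^{-nl_+}$, whose coefficients are meromorphic in $t$, and they generate a vertical fractional ideal because, for $t\neq 0$, they have no common zero by regularity. Proposition \ref{prop: regularpolyautom - uniformity main} then gives exactly the hypothesis of Proposition \ref{prop:themainfact} with $\epsilon_n=C/d^n$. So $g$ is a uniform function, and we obtain
\begin{align*}
    \lim_{t\to0}(\psi_t)_*\MA_t(g)=(\psi_0)_*\MA_{\NA}(g_{\NA}),
\end{align*}
where $g_{\NA}=\lim g_{\cF_n}/d^n$.

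\textbf{Step 2: identify the archimedean limit.} On $\bC^N$, by Sibony's analogue of the argument in Theorem \ref{thm: regularpolyauto - nonarchdynamics - invariancemain}, $(\dd^c\max\{l_-G^+,l_+G^-\})^N$ equals a fixed positive constant times $T_+^{l_-}\wedge T_-^{l_+}$. Since $G_t^\pm$ are continuous and $\max\{l_-G^+,l_+G^-\}$ is psh and grows like $\log\|x\|$, the measure $\MA_t(g)$ is a probability measure on $\bP^N$. We then need to check that it puts no mass on the hyperplane $s=0$. For this we use that $\mu_t$ has compact support in $K^+\cap K^-$ with total mass $1$, after normalization. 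Hence $\MA_t(g)=c\,\mu_t$ with a constant $c$ independent of $t$. The scaling of $G^\pm$ by $l_\mp$ only changes constants, and we fix the normalization consistently on both sides.

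\textbf{Step 3: identify the non-archimedean limit.} We show that $g_{\NA}$ restricted to $\bA^{N,\an}$ equals $\max\{l_-G^+_{\NA},l_+G^-_{\NA}\}$; this is pointwise evaluation of the same model functions on the Berkovich space. Theorem \ref{thm: regularpolyauto - nonarchdynamics - invariancemain} identifies its Monge--Amp\`ere measure on $\bA^{N,\an}$ with $c\,\mu_{\NA}$, up to the same constant.

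We expect the main obstacle to be showing that $\MA_{\NA}(g_{\NA})$ charges nothing on the boundary $\bP^{N,\an}\setminus\bA^{N,\an}$ and has total mass matching $\mu_{\NA}$. The total mass is $1$ on both sides by continuity of degrees. To rule out mass at infinity, we first try to show that $\mu_{\NA}$ alone already has full mass. Concretely, we aim to prove the following: on the set where $\|x\|$ is large, Kawaguchi's filtration (Proposition \ref{prop: regularpolyautom - NAdynamics - Kawaguchi's results}) places the point in $U^+$ or $U^-$. There Proposition \ref{prop: regularpolyauto - nonarchdynamics - GreensoutsideofJuliaset} gives $(\dd^c G^\pm_{\NA})^{l_\mp}=0$, so $\MA$ vanishes on a neighborhood of infinity minus a compact set. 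Combined with the identity $\MA=\mu_{\NA}$ on $\bA^{N,\an}$, this pins down the limit.

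Finally, since the supports of all the measures lie in $\bA^{N,\hyb}$, an open subset of $\bP^{N,\hyb}$, we conclude weak convergence on $\bA^{N,\hyb}$ by restriction.
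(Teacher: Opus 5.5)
There is a genuine gap, and it sits at the step you yourself flag as the main obstacle: showing that $\MA_{\NA}(g_{\NA})$ puts no mass on $H^{\an}_\infty:=\bP^{N,\an}_{\bC((t))}\setminus\bA^{N,\an}_{\bC((t))}$. Your overall outline is the same as the paper's: the uniform function $g=\lim d^{-n}\phi_n$, Proposition~\ref{prop:themainfact} on $\bP^{N,\hyb}$, identification on the affine parts, then restriction.

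\textbf{The gap.} Proposition~\ref{prop: regularpolyauto - nonarchdynamics - GreensoutsideofJuliaset} and Kawaguchi's filtration are statements on $\bA^{N,\an}\setminus K_f^{\pm}$. What they give is that $\mu_{\NA}$ vanishes on $\bA^{N,\an}$ outside the compact set $K_f^+\cap K_f^-$. Equivalently, $\MA_{\NA}(g_{\NA})$ vanishes on $U\setminus H^{\an}_\infty$ for some neighbourhood $U$ of $H^{\an}_\infty$. They say nothing about the mass carried by the closed set $H^{\an}_\infty$ itself. So you only obtain $\MA_{\NA}(g_{\NA})=\mu_{\NA}+\nu$ with $\nu$ supported on $H^{\an}_\infty$.

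On the complex side you do kill such a term, via Sibony's $\mu_t(\bC^N)=1$. On the non-archimedean side nothing in your argument computes $\mu_{\NA}(\bA^{N,\an})$, and ``total mass $1$'' refers only to $\bP^{N,\an}$. Without $\nu=0$, neither the identification of the limit with $(\psi_0)_*\mu_{\NA}$ nor your final restriction step goes through, since the latter needs the limit to be carried by $\bA^{N,\hyb}$.

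\textbf{How the paper closes it, and an alternative.} The paper uses the fact that Monge--Amp\`ere measures of continuous potentials charge no hypersurface, together with the uniform support Lemma~\ref{lmm: regularpolyautom - hybridconv - extn to proj}. You can also close it inside your own framework:
\begin{itemize}
\item Proposition~\ref{prop:regularpolyautom-Gf-growth}, with $|c^{\pm}|\le A\log|t|^{-1}$, gives $G_t(x)\ge\log\|x\|-A\log|t|^{-1}$.
\item Hence all $(\psi_t)_*\mu_t$ for small $t$ lie in one compact $W\subset\bA^{N,\hyb}$.
\item $W$ is closed in the compact space $\bP^{N,\hyb}$, so the weak limit $(\psi_0)_*\MA_{\NA}(g_{\NA})$ of these probability measures is carried by $W$.
\item This forces $\nu=0$, and it also supplies the tightness needed to restrict to $\bA^{N,\hyb}$.
\end{itemize}

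\textbf{A normalization error.} Since $d^n=d_+^{nl_-}$, the quantity $d^{-n}\log^+\|f_t^{nl_-}\|$ is the $(nl_-)$-th term of the sequence defining $G_t^+$. It therefore tends to $G_t^+$, not $l_-G_t^+$, and likewise for $G_t^-$. So $g$ equals $G_t=\max\{G_t^+,G_t^-\}$ on $\bC^N$ and $G_{\NA}$ on $\bA^{N,\an}$, and the constant is exactly $1$:
\begin{itemize}
\item $\MA_t(g)|_{\bC^N}=(\dd^c G_t)^N=\mu_t$ by Guedj--Sibony;
\item $\MA_{\NA}(g_{\NA})|_{\bA^{N,\an}}=\mu_{\NA}$ by Theorem~\ref{thm: regularpolyauto - nonarchdynamics - invariancemain}.
\end{itemize}
Your hedge that the scaling ``only changes constants'' is not harmless. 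The function $\max\{l_-G^+,l_+G^-\}$ is not of the form $\log\|x\|+O(1)$ unless $l_+=l_-=1$, so it is not a continuous metric on $\cO(1)$. Its Monge--Amp\`ere measure on $\bC^N$ would be $l_-^{l_-}l_+^{l_+}\mu_t$, which is incompatible with the total mass $1$ you rely on in Step 2.
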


The sketch of proof is as follows. We first note that, for each $t\in\bD^*$, the invariant measure $\mu_t$ is written as
\begin{align*}
    \mu_t=(\dd^c G_t)^N
\end{align*}
on $\bC^N$ by Proposition 2.9 of \cite{Guedj:2002yq}, we note that regular polynomial automorphisms in our situation are ``0-regular biholomorphism" in their paper. In the non-archimedean situation, we show the same equality in Theorem \ref{thm: regularpolyauto - nonarchdynamics - invariancemain}. Namely, 
\begin{align*}
    \mu_{\NA}=\left(\dd^c \max\{G_{\NA}^+,G_{\NA}^-\}\right)^N.
\end{align*}
Using this equality, we will show the theorem in the following way: we consider measures $\mu_t$ and $\mu_{\NA}$ on those over the projective spaces, and write them as Monge-Amp\`ere measures of some semi-positive metric of line bundles. By showing the uniformity, we may apply Boucksom-Favre-Jonsson's method to our situation. In this argument, the uniformity part is already shown in Section \ref{sec: regularpolyautom - uniformity of cpx dynamics}.

\subsection{Extension to the projective space}

Let \(f\), \(f_{\NA}\), \(\mu_t\), and \(\mu_{\NA}\) be as above. We use the same notation $\psi_t: \bP^N(\bC)\to \bP^{N,\hyb}$ and $\psi_0: \bP^{N,\an}_{\bC((t))}\to\bP^{N,\hyb}$ that is a natural extension of the original $\psi_t$ and $\psi_0$. In this subsection, we reduce the convergence problem on the hybrid affine
space to the corresponding convergence problem on the hybrid projective space.

We consider extension of $f_t$ and $f_{\NA}$ to the projective space. For the fixed inhomogeneous coordinate $x=(x_1,\ldots,x_N)$ of $\bA^{N}$, we fix a homogeneous coordinate $X=(X_1:\cdots:X_N:S)$ of $\bP^N$ such that $x_i=X_i/S$, and the line of infinity is defined by $S=0$.

As before, set $d_{\pm}:=\deg f^{\pm1}$. By Prop \ref{prop: regularpolyautom - fact - l+ + l_- =N, d^l=d^l}, we set $d:=d_+^{l_-}=d_-^{l_+}.$ Put
\begin{align*}
    &G_{n,t}^{\pm}(x)=\frac{1}{d^n}\log^+\left\|f_t^{\pm nl_{\mp}}(x)\right\|; \\
    &G_{n,t}:=\max G_{n,t}^{\pm}; \\
    &G_t:=\max G_t^{\pm}.
\end{align*}
and the same way for $G_{n,\NA}^{\pm}$, $G_{n,\NA}$, and $G_{\NA}$. Then, we already have
\begin{align*}
    &\mu_t=\left(\dd^c G_t\right)^N; \\
    &\mu_{\NA}=\left(\dd^c G_{\NA}\right)^N.
\end{align*}
Now we extend these functions and measures to the projective space. Define
\begin{align*}
    \phi_n(X,t)=\log\max\left\{\left\|F_t^{nl_-}(X)\right\|_*,\left\|F_t^{-nl_+}(X)\right\|_*\right\}
\end{align*}
where $\|\cdot\|_*$ is a normalized Fubini-Study norm on $\bP^N(\bC)$. By Proposition \ref{prop: regularpolyautom - uniformity main}, we obtain
\begin{align*}
    \frac{1}{d^n}\phi_n\to \phi
\end{align*}
for some continuous metric $\phi$ uniformly in the sense of Favre, Proposition \ref{prop:themainfact}.

Note that $\phi_n$ is a model function attached to the following $\cF_n$ for each $n$:
\begin{itemize}
    \item the degree $d^n$;
    \item meromorphic sections $F_{1,t}^{nl_-},\ldots, F_{N,t}^{nl_-}, F_{1,t}^{-nl_+},\ldots,F_{N,t}^{-nl_+}, S^{d^n}$
    \item the fractional ideal $\fa_n$ generated by the above sections
    \item any log resolution of $\fa_n$.
\end{itemize}
The non-archimedean model function $g_n$ of $\cF_n$ is defined as in \ref{subsection : backgrounds - model functions}. On $(S\neq0)$, the fractional $\fa_n$ is generated by the components of $f^{nl_-}$ and $f^{-nl_+}$; that is,
\begin{align*}
    g_n(x)=G_{n,\NA}(x).
\end{align*}
By Proposition \ref{prop:themainfact}, $g_n$ also has a limit, denoted by $g$.

Observe that both $\phi$ and $g$ have continuous potentials, they do not have a mass on any codimension $1$ subvariety on $\bP^N$.

We define
\begin{align*}
    &\hat{\mu}_t:=\MA_t(g);\\
    &\hat{\mu}_{\NA}:=\MA_{\NA}(g).
\end{align*}
Then we see that
\begin{align*}
    &\hat{\mu}_t|_{\bC^N}=\mu_t; \\
    &\hat{\mu}_{\NA}|_{\bA^{N,\an}_{\bC((t))}}=\mu_{\NA}.
\end{align*}

We recall the lower estimate of the Green function that will be used to control the supports of the measures uniformly. Note that the below statement is shown by Sibony for the complex dynamics and Kawaguchi for the non-archimedean dynamics.

\begin{prop}[\cite{Shu:2013ma},  Theorem 2.3 and \cite{Sibony99}, 2.2.]\label{prop:regularpolyautom-Gf-growth}
Let $K$ be either $\bC$ or a complete non-archimedean field, $f:\bA^N_K\to\bA^N_K$ be a regular polynomial automorphism. For $f$, there exist two open subsets $V^{\pm}$ of $\bA^{N,\an}_K$ such that they cover the whole $\bA^{N,\an}_K$ and  
\begin{align*}
    &G_f^+(x)\geq \log^+\|x\|+c^+\text{ on }V^+; \\
    &G_f^-(x)\geq \log^+\|x\|+c^-\text{ on }V^-.
\end{align*}
\end{prop}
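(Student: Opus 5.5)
The plan is to build $V^{\pm}$ from the filtration sets of Proposition \ref{prop: regularpolyautom - NAdynamics - Kawaguchi's results} and combine the one-step estimate with a telescoping argument. Fix $\delta,\varepsilon$ as in that proposition. Set
\begin{align*}
    V^+:=\{\|x\|<\varepsilon^{-1}\}\cup \mathrm{int}\,U_{\delta,\varepsilon}^+,\qquad V^-:=\{\|x\|<\varepsilon^{-1}\}\cup \mathrm{int}\,U_{\delta,\varepsilon}^-,
\end{align*}
slightly enlarging $\varepsilon$ or shrinking $\delta$ if needed so that the sets are open and still cover. The covering claim is the first step. Outside the ball, a point lies in neither $U^+$ nor $U^-$ only if $\|f(x)\|<\delta\|x\|^{d_+}$ and $\|f^{-1}(x)\|<\delta\|x\|^{d_-}$. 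Feeding this into the resultant identity
\begin{align*}
    \sum_j P_{ij}F_j+\sum_j Q_{ij}G_j+SR_i=X_i^m,
\end{align*}
evaluated at $(x:1)$, the ultrametric (or triangle) inequality gives
\begin{align*}
    \|x\|^m\le C\max\bigl(\delta\|x\|^{m},\ \|x\|^{m-1}\bigr),
\end{align*}
where the second term comes from the $S$-term with $S=1$ and $\|x\|>\varepsilon^{-1}$. This is impossible by the choice $\delta<1/C$ and $\varepsilon<1/C$. This is the same computation that lies behind Kawaguchi's Proposition 3.5 and Sibony's filtration in the complex case, where one only picks up an extra combinatorial constant.

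The second step is the lower bound on $U^+_{\delta,\varepsilon}$. There $\log\|f(x)\|\ge d_+\log\|x\|+\log\delta$. Forward invariance $f(U^+)\subset U^+$ lets us iterate:
\begin{align*}
    \frac{1}{d_+^n}\log\|f^n(x)\|\ge \log\|x\|+\log\delta\sum_{i=1}^{n}d_+^{-i}\ge \log\|x\|-\frac{\log\delta^{-1}}{d_+-1}.
\end{align*}
Letting $n\to\infty$ gives $G^+(x)\ge\log^+\|x\|+c^+$ with $c^+=-\log\delta^{-1}/(d_+-1)$; here $\log^+\|x\|=\log\|x\|$ since $\|x\|>\varepsilon^{-1}>1$.

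The third step handles the ball $\{\|x\|<\varepsilon^{-1}\}$. There $\log^+\|x\|\le\log\varepsilon^{-1}$ and $G^+\ge0$, so the inequality holds once $c^+\le -\log\varepsilon^{-1}$. So I take $c^+:=-\log\varepsilon^{-1}-\log\delta^{-1}/(d_+-1)$, and define $c^-$ symmetrically with $f^{-1}$ and $d_-$. In the non-archimedean case all of this is evaluated pointwise at seminorms on $\bA^{N,\an}_K$. This is legitimate because the inequalities involve only $|P(x)|$ for polynomials, and multiplicative seminorms satisfy the ultrametric inequality. The existence of the limit $G^\pm_{\NA}$ is Proposition \ref{prop: regularpolyautom - NA dynamics - local unif conv of G}.

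I expect the main obstacle to be the openness and covering of the sets. The $U^{\pm}$ are defined with a non-strict inequality, and the covering argument must survive passing to open versions. I would handle this by using two pairs of constants, $\delta'<\delta$ and $\varepsilon'>\varepsilon$. With strict inequalities $\|f^{\pm1}(x)\|>\delta'\|x\|^{d_\pm}$, $\|x\|>\varepsilon^{-1}$ and the open ball of radius $\varepsilon'^{-1}$, the sets are open by continuity of $x\mapsto|P(x)|$. The covering still holds because the estimate only needs $\delta'<1/C$. The forward invariance of the strict version follows from the same computation as Kawaguchi's, using $\varepsilon^{d_\pm-1}<\delta'$.
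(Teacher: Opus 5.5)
Your argument is the standard filtration argument behind the results the paper cites. The paper gives no proof of its own here, only the references to Kawaguchi (Theorem 2.3) and Sibony (2.2). Your covering estimate via the resultant identity, and the telescoping on $U^{+}_{\delta,\varepsilon}$ using Kawaguchi's forward invariance, are both correct.

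The one step that fails as written is the openness patch in your last paragraph. With $\varepsilon'>\varepsilon$, the open ball $\{\|x\|<\varepsilon'^{-1}\}$ is \emph{smaller} than the threshold $\|x\|>\varepsilon^{-1}$ of the outer pieces. So the shell $\varepsilon'^{-1}\le\|x\|\le\varepsilon^{-1}$ is covered by neither $V^{+}$ nor $V^{-}$, and this shell is nonempty (in the Berkovich case it contains, e.g., Gauss points of polydiscs of radius in that range). The radii have to overlap the other way, for instance
\begin{align*}
V^{\pm}:=\{\|x\|<2\varepsilon^{-1}\}\cup\{\|x\|>\varepsilon^{-1},\ \|f^{\pm1}(x)\|>\delta\|x\|^{d_{\pm}}\},\qquad c^{\pm}:=-\log(2\varepsilon^{-1})-\frac{\log\delta^{-1}}{d_{\pm}-1}.
\end{align*}
These sets are open by continuity of $x\mapsto|P(x)|$, and they cover. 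Your resultant estimate works verbatim with non-strict inequalities, since it only uses $C\delta<1$ and $\|x\|>\varepsilon^{-1}>C$ (up to a combinatorial constant when $K=\bC$). It therefore already excludes $\|f(x)\|\le\delta\|x\|^{d_+}$ and $\|f^{-1}(x)\|\le\delta\|x\|^{d_-}$ holding simultaneously, so no auxiliary $\delta'$ is needed.

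The outer pieces are contained in $U^{\pm}_{\delta,\varepsilon}$. The telescoping therefore only needs the forward invariance of Kawaguchi's non-strict sets, and you do not need to reprove invariance for strict versions. On the ball part, $G^{\pm}\ge0>\log^+\|x\|+c^{\pm}$ because $\delta<1$.
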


We note that, in the proof of Proposition \ref{prop:regularpolyautom-Gf-growth} of \cite{Shu:2013ma}, $c^{\pm}$ are determined explicitly by the logarithm of the coefficients of $f^{\pm 1}$. In particular, if $f=f_t$ where $f_t$ is an analytic family, there exists a constant $A>0$ such that
\begin{align*}
    \left|c^{\pm}\right|\leq A\log|t|^{-1}.
\end{align*}

Using this fact, we show the following lemma:

\begin{lmm}\label{lmm: regularpolyautom - hybridconv - extn to proj}
There exists a compact subset $W\subset \mathbb A^{N,\hyb}$ such that $\Supp((\psi_t)_*\mu_t)\subset W$ for all sufficiently small $t\neq0$ and $\Supp((\psi_0)_*\mu_{\NA})\subset W$.
\end{lmm}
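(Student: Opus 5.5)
The plan is to trap all supports inside a hybrid ``polydisc'' of radius a fixed power of $|t|^{-1}$, and to check that such a set is compact in $\bA^{N,\hyb}$. The candidate is
\begin{align*}
W:=\left\{x\in\bA^{N,\hyb}\ \middle|\ \pi_{\hyb}(x)\in\tau(\bar{\bD}_r),\ |x_i(x)|\leq |t(x)|^{-B}\ \text{for } i=1,\ldots,N\right\},
\end{align*}
for a large constant $B>0$ to be fixed. Here $t$ is the coordinate function in $A_r$, so $|t(x)|=r$ on every fiber after the normalization given by $\tau$. On the fiber over $\tau(t)$ with $t\neq0$, the point $\psi_t(z)$ evaluates $x_i$ to $|z_i|^{\log r/\log|t|}$. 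Hence the condition reads $|z_i|^{\log r/\log|t|}\leq r^{-B}$, i.e.\ $\log|z_i|\leq B\log|t|^{-1}$. On the central fiber it reads $|x_i|\leq r^{-B}$ in the $t$-adic normalization, i.e.\ $\log|x_i|\leq B\log|t|^{-1}$ for $|t|=r$. The set $W$ is closed, since it is defined by non-strict inequalities of continuous evaluation maps. It is compact, because it is the preimage of a compact polydisc under a proper map: equivalently, $W$ is closed in the hybrid analytification of a closed polydisc over $A_r$, which is compact as a Berkovich spectrum of an affinoid-type $A_r$-algebra, as in \cite{Favre:2020rz}.

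The first step is to show that the support of $\mu_t$ lies in $\{G_t^+=0\}\cap\{G_t^-=0\}=K_t^+\cap K_t^-$. In the complex case this follows from Sibony's support statement. It also follows from Proposition \ref{prop: regularpolyauto - nonarchdynamics - GreensoutsideofJuliaset} and its complex analogue: $T_+^{l_-}$ vanishes off $K^+$ and $T_-^{l_+}$ vanishes off $K^-$, and the wedge product is local. The same argument gives $\Supp\mu_{\NA}\subset K^+_{f_{\NA}}\cap K^-_{f_{\NA}}$.

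The second step uses Proposition \ref{prop:regularpolyautom-Gf-growth}. Any point $x$ in the support lies in $V^+$ or $V^-$. On $V^\pm$ we have $0=G^\pm(x)\geq\log^+\|x\|+c^\pm$, so $\log^+\|x\|\leq|c^\pm|\leq A\log|t|^{-1}$ by the remark after that proposition. In the non-archimedean case $|c^\pm|$ is bounded by $A\log r^{-1}$ with the same $A$, since $c^\pm$ there are the logarithms of the $t$-adic absolute values of the same coefficients. Taking $B=A$ (or slightly larger) places every support in $W$ for all small $t$ and at $t=0$.

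The main obstacle is the uniformity of the constants $c^\pm$ in $t$, i.e.\ that the sets $V^\pm$ and the constants in Kawaguchi's and Sibony's estimates depend only on $\log$ of the coefficient sizes. This is asserted in the remark above and parallels the explicit $\delta(t),\varepsilon(t)$ bounds in the proof of Proposition \ref{prop: regularpolyautom - uniformity main}, which I would reuse if needed. A minor secondary point is the compactness of $W$; I would handle it by citing properness of $\pi_{\hyb}$ on the hybrid closed polydisc.
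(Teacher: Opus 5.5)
Your proposal is correct and follows essentially the same route as the paper. The supports of $\mu_t$ and $\mu_{\NA}$ lie in $\{G=0\}=K^+\cap K^-$, and the lower bound $G^{\pm}\geq\log^+\|x\|+c^{\pm}$ on $V^{\pm}$, with $|c^{\pm}|\leq A\log|t|^{-1}$ (resp.\ $A\log r^{-1}$ at $t=0$), confines these sets; your $W$ is exactly the paper's hybrid polydisc $\{\log|x_i|\leq A\log r^{-1}\}$, since $|t(x)|=r$ on every fiber. You are a bit more explicit than the paper on two points it only asserts: the support inclusion, via locality of $T_+^{l_-}$ and $T_-^{l_+}$ off $K^{\pm}$, and the compactness of $W$ as a closed subset of the hybrid closed polydisc.
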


\begin{proof}
We note that, since $\mu_t=(\dd^c G_t)^N$, $\Supp \mu_t\subset (G_t=0)$.

Fix any $t$ and let $V_t^{\pm}$ be the open set given in Proposition \ref{prop:regularpolyautom-Gf-growth}. For any $x\in V_t^+$, we have
\begin{align*}
    G_t^+(x)\geq \log^+\|x\|+c^+,
\end{align*}
and $x\in V_t^-$
\begin{align*}
    G_t^-(x)\geq \log^+\|x\|+c^-.
\end{align*}
Since $V_t^+\cup V_t^-=\bC^N$, for any $x\in \bC^N$,
\begin{align*}
    G_t(x)\geq\log\|x\|+\min (c^{\pm}).
\end{align*}
Therefore, if $G_t(x)=0$, we must have
\begin{align*}
    \log\|x\|\leq -\min c^{\pm}\leq A\log|t|^{-1}.
\end{align*}
The same formula holds for the non-archimedean dynamical Green function $G_{\NA}$.
Define the compact subset $W$ of $\bA^{N,\hyb}$ as follows:
\begin{align*}
    W=\{x\in \bA^{N,hyb}| \log|x_i|_{\hyb}\leq A(-\log r)\text{ for }i=1,\ldots,N\}. 
\end{align*}
Then by the definition of the hybrid norm and above calculation, we obtain $\psi_t(G_t=0)\subset W$ and $\psi_0(G_{\NA}=0)\subset W$.
\end{proof}

By the above lemma, we may reduce the affine convergence to the projective convergence. That is, the weak convergence $(\psi_t)_*\hat{\mu}_t\to (\psi_0)_*\hat{\mu}_{\NA}$ actually implies $(\psi_t)_*\mu_t\to (\psi_0)_*\mu_{\NA}$. Indeed, since $\bA^{N,\hyb}$ is locally compact and Hausdorff, we may choose $\chi\in C_c(\bA^{N,\hyb})$ such that $\chi|_W=1$.
For any $\phi \in C_b(\bA^{N,\hyb})$, hence,
\begin{align*}
    \int_{\bA^{N,\hyb}}\phi d(\psi_t)_*\mu_t=\int_{\bA^{N,\hyb}}\chi\phi d(\psi_t)_*\mu_t=\int_{\bP^{N,\hyb}}\phi d(\psi_t)_*\hat{\mu}_t
\end{align*}
and the same for $t=0$ and $\mu_{\NA}$. The convergence of the right hand side assures that of the left.

\subsection{Proof of Theorem \ref{thm: regularpolyautom - hybrid conv - main theorem}}
Now we show the main theorem.
By Lemma \ref{lmm: regularpolyautom - hybridconv - extn to proj}, it is enough to show the weak convergence
\begin{align*}
    \lim_{t\to 0}(\bar\psi_t)_*\widehat{\mu}_t=(\bar\psi_0)_*\widehat{\mu}_{\NA}.
\end{align*}

By definition we have
\begin{align*}
    &\MA_x(g(x,t))=\widehat{\mu}_t \quad \text{on}\quad \bP^N(\bC);\\
    &\MA_{\NA}(g)=\widehat{\mu}_{\NA}\quad \text{on}\quad \bP^{N,\an}_{\bC((t))},
\end{align*}
where by $\MA_x$ we denote the Monge-Amp\`ere operator with respect to the variable $x$. By Proposition \ref{prop: regularpolyautom - uniformity main}, $g(x,t)$ is uniformly approximated by the sequence $\{g_n(x,t)/d^n\}_n$, where $g_n$ is a model function of degree $d^n$. Also by definition $g$ is the limit of the sequence of the non-archimedean model function corresponding to $\{g_n(x,t)\}_n$. Therefore by Proposition \ref{prop:themainfact}, we obtain the required weak convergence.

\bibliographystyle{plain}
\bibliography{hybrid_dynamics_of_regular_polynomial_automorphisms}
\end{document}